\def\CC{{\mathbb C}}
\def\FF{{\mathbb F}}
\def\HH{{\mathbb H}}
\def\PP{{\mathbb P}}
\def\RR{{\mathbb R}}
\def\ZZ{{\mathbb Z}}
\newcommand{\ccC}{{\mathcal{C}}}
\newcommand{\ccD}{{\mathcal{D}}}
\newcommand{\ccE}{{\mathcal{E}}}
\newcommand{\ccG}{{\mathcal{G}}}
\newcommand{\ccO}{{\mathcal{O}}}
\newcommand{\gotg}{\mathfrak{g}}
\newcommand{\ud}{\mathrm{d}}
\DeclareMathOperator{\Gr}{Gr}
\DeclareMathOperator{\Pic}{Pic}
\DeclareMathOperator{\rk}{rk}
\newcommand{\set}[1]{\left\{#1\right\}}
\newcommand{\fromto}[2]{#1, \dotsc, #2}
\newcommand{\Wedge}[1]{{\textstyle{\bigwedge\nolimits}^{\! #1}}}
\numberwithin{equation}{section}
\newtheorem{thm}[equation]{Theorem}
\newtheorem*{thm*}{Theorem}
\newtheorem{lemma}[equation]{Lemma}
\newtheorem{conjecture}[equation]{Conjecture}
\newtheorem*{problem*}{Problem}
\theoremstyle{remark}
\newtheorem{example}[equation]{Example}
\newtheorem{rem}[equation]{Remark}
\theoremstyle{definition}
\newcounter{betweenenumi}
\theoremstyle{plain}
\renewcommand{\theenumi}{(\roman{enumi})}
\newcommand{\eemail}[1]{{\href{mailto:#1}{\nolinkurl{#1}}}}
\newcommand{\fg}{\mathfrak g}
\newcommand{\ccK}{\mathcal{K}}
\newcommand{\ccEnd}{\mathcal{E}nd}
\newcommand{\ins}{\Bigg\lrcorner}
\newtheorem{definition}[equation]{Definition}
\DeclareMathOperator{\RatCur}{RatCurves}
\DeclareMathOperator{\Stab}{\mathrm{Stab}}
\DeclareMathOperator{\LGr}{\mathrm{LGr}}
\DeclareMathOperator{\End}{End}
\DeclareMathOperator{\Hol}{Hol}
\DeclareMathOperator{\SO}{\mathbf{SO}}
\DeclareMathOperator{\CSp}{\mathbf{CSp}}
\DeclareMathOperator{\UU}{\mathbf{U}}
\DeclareMathOperator{\PGL}{\mathbf{PGL}}
\DeclareMathOperator{\SU}{\mathbf{SU}}
\DeclareMathOperator{\OOO}{\mathbf{O}}
\DeclareMathOperator{\Aut}{\mathbf{Aut}}
\DeclareMathOperator{\Gsf}{\mathbf{G}}
\DeclareMathOperator{\GL}{\mathbf{GL}}
\newcommand{\Gl}{\GL}
\DeclareMathOperator{\SL}{\mathbf{SL}}
\DeclareMathOperator{\Sp}{\mathbf{Sp}}
\DeclareMathOperator{\Spin}{\mathbf{Spin}}
\begin{document}

\keywords{contact geometry, projective complex contact manifolds, quaternion--K\"ahler manifolds, varieties of minimal rational tangents, exterior differential systems}

\title[Complex contact manifolds\dots]{Complex contact manifolds, varieties of minimal rational tangents, and exterior differential systems}


\author[J. Buczy\'nski]{Jaros\l aw Buczy\'nski}
\address{Institute of Mathematics of Polish Academy of Sciences, \\
ul. {\'S}niadeckich 8, 00-656 Warszawa, Poland,\\
         and Faculty of Mathematics, Computer Science and Mechanics of University of Warsaw, \\
         ul. Banacha 2, 02-097 Warszawa, Poland\\
         E-mail: jabu@mimuw.edu.pl}
\thanks{Buczy\'nski is supported by the Polish National Science Center project 
   Algebraic Geometry: Varieties and Structures,  2013/08/A/ST1/00804 and by a scholarship of Polish Ministry of Science.
   Moreno is supported  by the Polish National Science Centre grant under the contract number 2016/22/M/ST1/00542.}  

\author[G. Moreno]{Giovanni Moreno}
\address{Department of Mathematical Methods in Physics,  Faculty of Physics, University of Warsaw, \\
ul. Pasteura 5, 02-093 Warszawa, Poland\\
E-mail: giovanni.moreno@fuw.edu.pl}

\maketitle

\begin{abstract}
Complex contact manifolds arise naturally in differential geometry, algebraic geometry and exterior differential systems.
Their classification  would answer an important question about holonomy groups.
The geometry of such manifold $X$ is governed by the contact lines contained in $X$.
These are related to the notion of a variety of minimal rational tangents.
In this review we discuss the partial classification theorems of projective complex contact manifolds.
Among such manifolds one finds contact Fano manifolds (which include adjoint varieties) and projectivised cotangent bundles.
In the first case we also discuss a distinguished contact cone structure, arising as the variety of minimal rational tangents.
We discuss the repercussion of the aforementioned classification theorems for the geometry of quaternion-K\"ahler manifolds with positive
scalar curvature and for the geometry of second-order PDEs imposed on hypersurfaces.
\end{abstract}

\setcounter{tocdepth}{2}
\tableofcontents

\section{Introduction}\label{secIntro}

 A contact manifold can be briefly defined as
\[
\hbox{\emph{a manifold equipped with a maximally non-integrable corank-one distribution.}}
\]
Such a short, albeit correct, definition encompasses both the complex-analytic and the real-differentiable setting. In Section~\ref{secCont}
we will comment further on it. There we provide the first   examples of such objects and we state the classification problem, both from the local
and from the global point of view. Indeed, a  remarkable feature of contact manifolds is that the local equivalence problem is trivial, whereas
the global one can lead to extremely hard conjectures.

Then we switch to the real-differentiable setting and review the seemingly unrelated notions of holonomy of Riemannian manifolds
(Section~\ref{sec_holonomy}) and quaternion-K\"ahler manifolds (Section~\ref{sec_qK}). In particular, we focus on the \emph{twistor
construction}, which associates a complex contact manifold to a quaternion-K\"ahler manifold. This casts  an important  bridge between the two
worlds. The classification results concerning projective contact manifolds (reviewed in Section~\ref{sec_class}) will be mirrored by analogous
results (and conjectures) related to quaternion-K\"ahler manifolds with positive scalar curvature.
We also investigate and motivate the notion of a contact Fano manifold and the adjoint variety to a simple Lie group (Example~\ref{exAdjCont}).

In the rest of the paper we turn our attention to a feature of projective complex contact manifold, namely the fact of being  \emph{uniruled},
that is covered by rational curves.
Uniruled manifolds can be studied through their \emph{varieties of minimal rational tangents}, also known as VMRT's, briefly recalled in
Section~\ref{SecVMRT}.
For contact manifolds a minimal uniruling is made of contact lines, that is, rational curves tangent to the contact distribution and having
degree $1$.
In other words, each projective complex contact manifold is equipped with a distinguished \emph{contact cone structure} (Section~\ref{secCone})
which, save for the ``degenerate'' case of $\CC\PP^{2n+1}$, has dimension $n-1$.

As an illuminating  example of a uniruled contact manifold we choose the aforementioned {adjoint variety} to a complex simple Lie group, in which
case the corresponding VMRT is called the \emph{subadjoint variety} (Section~\ref{SecAdj}).
We conclude this paper by recalling that contact manifolds provide a natural background for second order nonlinear PDEs.
This perspective allows us to see that there is a natural procedure to associate with a contact cone structure on a contact manifold a  second order
PDE. This will be clarified in   Section~\ref{secGHom}, where we recall some recent results about   $G$-invariant  second order  PDEs, which rely
precisely on such a correspondence between contact cone structures and  second order PDEs on a contact manifold. The simplest case, that is when $G$
is of type $\mathsf{A}$, is given a particular attention, since it leads to the Monge--Amp\`ere equation (Section~\ref{secMonge}).

\subsection*{Conventions}
This survey, by vocation,  deals simultaneously with topics that range from   complex algebraic geometry through holomorphic manifolds to the real
differentiable ($C^{\infty}$) category (including quaternionic spaces) and exterior differential systems.
These distinct areas are inhabited by disparate tribes of scientists, each speaking a different  dialect of mathematical language.
Therefore we try our best to be always explicit about the objects we are working with---even  at the risk of being tedious and overloading with
adjectives. The symbol $X$ is always used to denote a complex manifold, which often will be projective (that is admitting a closed holomorphic
immersion into a complex projective space),  and hence identified with a smooth  algebraic variety over $\CC$.
The letter $M$ is going to be used for a differentiable manifold, typically endowed  with a metric $g$, that is a Riemannian manifold.

We will consider projective spaces and vector spaces over fields of real numbers $\RR$, complex numbers $\CC$,
or quaternions $\HH$.\footnote{Honestly,  $\HH$ is not a field, but this will not lead to any confusion.}
If $V\simeq \FF^{n+1}$ is a vector space over a field $\FF$, then by $\PP(V)$ (also denoted $\FF\PP^{n}$) we mean the projective space of lines
in $V$. Similarly, for a (both real and complex) vector bundle $\ccE$ over a topological space, by $\PP(\ccE)$ we mean the \emph{naive}
projectivisation of the vector bundle,  that is the set of lines in each fibre of $\ccE$.
By $V^*$ and $\ccE^*$ we mean the dual vector space and the dual vector bundle, respectively.
The notion of conormal variety $X^\#$ is deliberately used in replacement of the framework of (low-order) jets of hypersurfaces, the latter being
known only to  a niche community. Experts in PDEs imposed on hypersurfaces will find it easy to match the two languages.

A major psychological difference between a compact real (differentiable) manifold $M$ and a compact complex manifold $X$ is the abundance of
global functions:
on $M$ there are plenty of smooth functions $M \to \RR$,
while the only global holomorphic functions $X \to \CC$ are the constant ones.
This discrepancy  can be partially overcome by employing sections of line bundles on $X$ instead of functions. Indeed, sections of such bundles can
be seen as a kind of \emph{twisted} functions. This is a  crucial concept for us, since the    one-form defining a contact structure will be, by
its nature, a ``twisted'' one, see  Section~\ref{secCont}.
For a compact complex manifold $X$ and a (complex) line bundle $L$, we say that $L$ is \emph{very ample}, if it has plenty of (holomorphic!) sections.
``Plenty'' in the sense that the  natural map $X \to \CC\PP^N = \PP(H^0(X,L)^*)$ that sends  a point $x\in X$ to  the   hyperplane of  $H^0(X,L)$
made of sections vanishing on  $x$, is a closed immersion.\footnote{The real-differentiable counterpart of this property would be the fact the
points of a manifold are told apart by smooth functions:
given two points there is  always   a smooth function taking different values on them,
and given two tangent directions at a fixed point, there are two smooth functions with different corresponding partial derivatives.}

A line bundle $L$ is \emph{ample} if its power $L^{\otimes m}$ is very ample for some integer $m>0$.
Thus ample or very ample line bundles exist only on projective $X$.
In the differential geometric dialect, the ampleness is known as \emph{positivity}.\footnote{Strictly speaking,
ampleness and positivity coincide whenever   considering line bundles on complex projective manifolds.}
Explicitly, a line bundle $L$ on a K\"ahler manifold $X$ is \emph{positive} if its first Chern class $c_1(L) \in H^2(X,\ZZ)$ is represented by
a K\"ahler metric in $H^{1,1}(X, \CC) \subset H^2(X, \CC)$ (subject to the natural map $ H^2(X,\ZZ) \to H^2(X, \CC)$).

A (complex) projective manifold $X$ of complex dimension $n$ is \emph{Fano} if the top exterior power of the (complex) tangent bundle $\Wedge{n}TX$
is ample. We discuss some of many fascinating properties of Fano manifolds in Section~\ref{SecVMRT}.

It is worth stressing that, \emph{mutatis mutandis}, the main definition of a contact manifold (see Definition~\ref{defContMan}) originates and
works equally well in the real differentiable category.

\subsection*{Acknowledgements}
The authors are sincerely grateful to Aleksandra Bor{\'o}wka, Jun-Muk Hwang, Stefan Kebekus, Pawe\l\ Nurowski, Katja Sagerschnig, Travis Willse
and Jaros{\l}aw Wi{\'s}niewski,
for their numerous explanations. The authors thank the anonymous referee for carefully reading the manuscript.

\section{Complex contact manifolds}\label{secCont}
Assume $X$ is a complex manifold. By $T X$ we denote the tangent bundle to $X$.
Suppose we are given a complex vector subbundle $F \subset TX$, and denote by $L$ the quotient bundle $TX/F$. By \emph{rank} of the vector bundle $F$
we mean the complex dimension of any fibre $F_x$ for $x\in X$.
Given our motivation, that is, to define contact manifolds, at some point we will assume that $\rk F=\dim X-1$,
or equivalently,  that $L$ is a line bundle.
This should justify the notation $L$ for the quotient vector bundle, although for a while we still work in this more general setting.

Thus we have a short exact sequence of vector bundles
\begin{equation}\label{eqDefTheta}
 0 \to F \to  TX \stackrel{\theta}{\to} L \to 0.
\end{equation}
Here $\theta$ is a global section of $\Omega^1(X)\otimes L$, where $\Omega^1(X)=T^*X$ is the cotangent bundle to $X$.
That is, we can think of $\theta$ as a ``twisted'' $1$-form on $X$, or rather as a $1$-form with values in $L$.

We now pass to a local situation. That is, we pick a sufficiently small open subset $U \subset X$,
such that $L$ becomes a trivial vector bundle, $L|_U \simeq \ccO_U^{\oplus \rk L}$.
Here $\ccO_U$ denotes the trivial line bundle on $U$.
Remember, that our main case of interest is when $\rk L =1$.
The trivialisation  $L|_U\simeq \ccO_U^{\oplus \rk L}$ ``untwists'' the $1$-form $\theta$,
which now becomes a usual holomorphic form, or rather a bunch of $\rk L$ such forms, which we may conveniently  arrange into a vector
$\theta_U=(\fromto{\theta^1_{U}}{\theta^{\rk L}_{U}})$.

Accordingly, if we take the derivatives of all these local forms, which can be jointly denoted by $\ud \theta_U\in \Omega^2(U)^{\oplus \rk L}$,
then we can think of $\ud \theta_U $ as a map of vector bundles $\Wedge{2} (TU) \to L|_{U}$.
Note that here we use the inverse of the trivialisation again, and it is decisive that this trivialisation is exactly the same  as before.
Nevertheless, we stress that the map $\ud \theta_U \colon \Wedge{2} (TU) \to L|_{U}$
 strictly depends on the choice of the trivialisation $L|_{U} \simeq \ccO_U^{\oplus \rk L}$.

Now funny things start to happen.

Firstly, we may restrict $\ud \theta_U$ to $\Wedge{2} F|_{U} \subset \Wedge{2} (TU)$.
Then $(\ud \theta_U)|_{F}$ does not (!) depend on the choice of the trivialisation.
Roughly to see that, take two different trivialisations, and say,
  $A\colon \ccO_U^{\oplus \rk L} \to \ccO_U^{\oplus \rk L}$ is the ``difference'' between the two trivialisations.
Then by Leibnitz rule $\ud(A\cdot \theta_U)  = \ud A \wedge \theta_U + A \cdot \ud \theta_U$.
After the restriction to $F$, the problematic first summand vanishes, since $F = \ker \theta$.
And the twist by $A$ gets ``untwisted'' by choosing the inverse of the original trivialisations.\footnote{A reader interested in a more rigorous
proof may look, for instance, at \cite[Theorem 2.2.2]{10.2307/1970192}.}
Therefore we get a perfectly well defined map $(\ud \theta_U)|_{\Wedge{2} F|_{U}} \colon \Wedge{2} F|_U \to L|_{U}$,
  which then glues together to a well defined global map of vector bundles.
  We denote this map by
\begin{equation}\label{eqDeThetaEff}
   \ud \theta_F \colon \Wedge{2} F \to L\, ,
\end{equation}
and it is tremendously important for the content of this survey.

Secondly, the map $\ud \theta_F \colon \Wedge{2} F \to L$ is strictly related to the Lie bracket of vector fields on $X$.
Suppose $x\in X$ is a point and $\mu$ and $\nu$ are two vector fields defined near $x$.
In the simplest case, the relation to the Lie bracket is explained by the case where both $\mu$ and~$\nu$ are tangent to $F$.
Then $\ud \theta_F (\mu_x \wedge \nu_x)$ is equal\footnote{Depending on the adopted convention ``equal'' may mean ``equal up to a projective
factor'', see for instance a comment in \cite[p.~23]{jabu_dr}.} to  $\theta([\mu, \nu])_x$.
In particular:
\renewcommand{\theenumi}{(\Alph{enumi})}
\begin{enumerate}
   \item $F$ is closed under the Lie bracket (that is, $F$ is a \emph{foliation}) if and only if $\ud \theta_F$ is identically zero, and
   \item \label{item_integrable_the_form_vanishes}
         if $Y\subset X$ is a submanifold tangent to $F$ (that is, $TY \subset F|_Y$), then $\ud \theta_F |_{TY }$ is identically $0$.
\end{enumerate}
\renewcommand{\theenumi}{{\rm(\roman{enumi})}}

We are principally interested in the opposite case, that is when $F$ is as far as possible from being a foliation. The expression ``maximally
non-integrable'' appearing in the very first definition from Section~\ref{secIntro} alluded precisely to this phenomenon.

\begin{definition}\label{defContMan}
For a complex manifold $X$ with a subbundle $F\subset TX$ such that the quotient $L:=TX/F$ is a line bundle we say that
$(X,F)$ is a \emph{contact manifold} (with \emph{contact distribution} $F$) if $\ud \theta_F$ is nowhere degenerate.
Here $\ud \theta_F \colon \Wedge{2} F \to L$ is the skew symmetric bilinear map \eqref{eqDeThetaEff}  defined above from the twisted form
$\theta \colon TX \to L$, and nowhere degenerate means that for all $x\in X$ the bilinear map $(\ud \theta_F)_x \colon \Wedge{2} F_x \to L_x\simeq
\CC$ has maximal rank (equal to $\rk F$).
\end{definition}

In particular, if $(X,F)$ is a contact manifold, then
the dimension of $X$ is odd, say equal to $2n+1$,
and $\ud \theta_F$ determines an isomorphism $F \simeq F^* \otimes L$,
which we will refer to as \emph{duality}.
Moreover, $(\ud \theta_F)^{\wedge n} \colon \Wedge{2n} F \to L^{\otimes n}$ is a nowhere vanishing map of line bundles,
thus  $ \Wedge{2n} F \simeq L^{\otimes n}$ and consequently
\begin{equation}\label{eqLinBundContMan}
       \Wedge{2n+1} TX \simeq L^{\otimes (n+1)}.
\end{equation}

A few examples of a contact manifold $(X,F)$ can be easily produced.

\begin{example}\label{ex_CP_2n+1}
The odd-dimensional projective space $X=\CC\PP^{2n+1}$ is a contact manifold.
\end{example}

Indeed, one regards $\CC\PP^{2n+1}$ as the projectivisation of the $2(n+1)$ linear symplectic space $V:=\CC^{2(n+1)}$ and defines the hyperplane
$F_\ell:=\ell^*\otimes\frac{\ell^\perp}{\ell}$ in each tangent space $\ell^*\otimes \frac{V}{\ell}$ to $\PP(V)$ at $\ell\in\PP(V)$. The so-obtained
distribution of hyperplanes turns out to be a contact one.\footnote{See \cite[\S E.1]{jabu_dr} for an explicit, down to earth calculation that shows
this.} Expectedly, the line bundle $L$ turns out to be a power of the tautological bundle: $L=\ccO_{\CC\PP^{2n+1}}(2)$.

Other examples can be obtained from an $(n+1)$-dimensional manifold $Y$:

\begin{example}\label{ex_PTY}
The projectivised cotangent space $X=\PP (T^*Y)$ is a contact manifold.
\end{example}

The particular sub-case where $Y$ is the $(n+1)$-dimensional (complex) projective space $\PP^{n+1}$ will be carefully examined in the last
Section~\ref{secMonge}. A comparable example in  the real-differentiable category would be  the case $Y=\RR^{n+1} $.
Indeed, in this case,  $\PP (T^*Y)$ is the compactification of the space $J^1(n,1)$ of first-order jets of functions of $n$ variables. In some
very specific literature, such a compactification of  $J^1(n,1)$ is referred to as the space of first-order jets of hypersurfaces of $\RR^{n+1}$
\cite[Section 0.2]{MR1857908}.

Such an example of a  contact manifold $X$ provides the link with the geometric theory of partial differential equations in 1 dependent and $n$
independent variables, in both its modern declinations. In the  framework based on jets, one works with the manifold $J^1(n,1)$ and its naturally
defined contact structure \cite[Chapter 2]{MR1670044}. In the framework based on exterior differential systems, one focuses instead on the ideal
generated by the contact form \eqref{eqDefTheta} and its differential \eqref{eqDeThetaEff} in the exterior algebra of differential forms on~$X$
\cite{McKay2019,MR1083148}. Indeed, some of the remarks about contact manifolds collected in this review can be translated in terms of  geometry of
PDEs, but we leave it to the reader to find the appropriate dictionary.

Let us recall why $\PP (T^*Y)$ possesses a natural contact distribution.  Intrinsically, a~point $H\in \PP (T^*Y)$ is a hyperplane tangent to $Y$ at
the point $y=\pi(H)$, with $\pi$ being the canonical projection. Therefore, $F_H:=(\ud\pi)^{-1}(H)$ is a hyperplane tangent to $X$ at~$H$, and again
the so-obtained distribution of hyperplanes turn out to be a contact one. In this case,  $L=\ccO_{\PP (T^*Y)}(1)$.

It turns out that the contact manifolds from Examples~\ref{ex_CP_2n+1} and~\ref{ex_PTY} are all locally equivalent. Indeed, the local structure of
a (complex) contact manifold is unique up to a (holomorphic) change of coordinates in virtue of an  odd-dimensional analogue of the standard Darboux
theorem (see, for instance, \cite[Theorem 3.1]{Blair2010}).
Explicitly, for every point $x\in X$ of a contact manifold $X$, there exists an open neighbourhood $U$ of $x$, a~trivialisation of $L|_U$,
and a choice of coordinates $\fromto{\alpha_1}{\alpha_n}, \fromto{\beta^1}{\beta^n}, \gamma$ on $U$, such that
\begin{equation}\label{equ_Darboux}
  \theta|_U = \ud \gamma - \sum_{i=1}^n \alpha_i \,\ud \beta^i .
\end{equation}
By contrast, the global geometry of (compact) complex contact manifolds is really fascinating and the non-equivalent cases from
Examples~\ref{ex_CP_2n+1} and~\ref{ex_PTY} above are just the tip of the iceberg.
Before focusing on a particular  feature of it, we explain one of many motivations to study the contact manifolds, namely the
\emph{quaternion-K\"ahler manifolds}.
This motivation illustrates the origins of a family of examples: the adjoint varieties (see Section~\ref{SecAdj}).
The quaternion-K\"ahler manifolds are best explained in the context of holonomy groups, outlined in Section~\ref{sec_holonomy}.

\section{Riemannian manifolds with special holonomy}\label{sec_holonomy}

For a short while, we now leave the world of algebraic geometry and complex manifolds and take a short detour through the land of Riemannian
manifolds.
This is necessary to clarify the relationship of contact manifolds with quaternion-K\"ahler manifolds, which we review in Section~\ref{sec_qK}.

Suppose that $(M, g)$ is a Riemannian manifold of (real) dimension $m$,
  that is a differentiable manifold equipped with a Riemannian metric $g$.
Then the metric determines the notion of \emph{parallel transport\/}\footnote{Strictly speaking, a \emph{connection} determines the parallel
transport, and here we consider the Levi-Civita connection associated with $g$, see, for instance, \cite[Definition 2.2.1]{9780198506010}.
The discussion of holonomy groups can be extended to a non-metric case, for manifolds with a connection, but in this survey we restrict our
attention to the case of Riemannian manifolds.}  along piecewise smooth paths in $M$.
For a path  on $M$, that is a piecewise smooth map $\gamma\colon [0,1] \to M$, the parallel transport along $\gamma$
is a linear map $\Gamma(\gamma) \colon T_x M \to T_y M$, which can be thought of as moving the tangent vectors along the curve.
This map can be reversed  by going along the same curve in the opposite direction
and it preserves the norm and the angles between vectors.
Thus it is a linear isometry of $(T_x M, g_x)$ and $(T_y M, g_y)$.
In particular, any loop in $M$ starting and ending at $x\in M$ determines an element of the orthogonal group $\OOO(T_x M)$.
The set of all these elements is a closed subgroup of $\OOO(T_x M)$, called the \emph{holonomy group} of $M$ at $x$ and denoted by $\Hol_{x}(M)
\subseteq  \OOO(T_x M)$.

From now on we suppose in addition that $M$ is connected.\footnote{This is by no means restrictive. Indeed,  the holonomy groups of each component
are independent and, as such, they can be considered component by component.}
Then we may change the base point from $x$ to $y$ by choosing any path $\gamma$ connecting $x$ to $y$.
Then $\Hol_y(M) = \Gamma(\gamma)^{-1} \circ \Hol_x(M)  \circ \Gamma(\gamma)$,
and by choosing any linear isometry $(T_x M, g_x) \simeq (\RR^m, \langle \cdot, \cdot \rangle)$
  we obtain a subgroup $\Hol(M) \subseteq  \OOO(m)$. Here $\langle \cdot, \cdot \rangle$ denotes the standard scalar product on $\RR^m$.
Up to a conjugation in $\OOO(m)$, this subgroup is well defined, independent of the choice of $x$ or of the isomorphism $T_x M \simeq \RR^m$.

A major theorem by Berger (Theorem~\ref{thm_Berger}) shows a list of all possible holonomy groups (see \cite{10.2307/3597350} for a modern proof)
as representations in $\OOO(m)$.
For the simplicity of presentation, we restrict our attention to the case when $M$ is simply connected.
In this case, the subgroup $\Hol(M)$ is connected, as any parallel transport along a loop might be contracted to a trivial constant loop,
and the suitable homotopy (which must only use piecewise smooth loops) provides a path in $\Hol(M)$ connecting any element to the identity.
In particular, $\Hol(M) \subseteq \SO(m)$.

There are two other simplifying assumptions in the theorem of Berger.
Firstly, one assumes that $M$ is not locally isometric to a product of two Riemannian manifolds.
If $(M, g) \simeq  (M_1 ,g_1) \times (M_2, g_2)$, then $\Hol(M) = \Hol(M_1) \times \Hol(M_2) \subseteq \SO(\dim M_1) \times \SO(\dim M_2) \subseteq
\SO(m)$.
The case when the  isometry holds only locally can be dealt with similarly, though  with some additional effort. Therefore, our first  assumption can
be thought of as the \emph{irreducibility} assumption---in the sense that all possible holonomy groups can be built from the bricks that are all
listed in Theorem~\ref{thm_Berger}.

Still, there is one type of bricks  missing. This is the case of   $(M, g)$   locally isometric to a symmetric space $G/H$, where $G$ is the isometry
group of $M$ and $H$ is the subgroup of~$G$ preserving a fixed point. Then  $\Hol(M)$ is equal to $H$ and   the classification of possible holonomy
groups descends  from the classification of symmetric spaces and their isometry groups (see, for instance, \cite{9780080873244}).

Below we mention the compact \emph{real} Lie groups $\Sp(n)$, which are not to be mistaken for the \emph{complex} (or real) Lie groups
$\Sp_{2n}(\CC)$ (or $\Sp_{2n}(\RR)$) of automorphisms of $\CC^{2n}$ (or $\RR^{2n}$) preserving the standard symplectic form.
The latter appear in the process of prolongation of a contact manifold (see, for instance, \cite[Section 2.1]{GMM2018_BCP}
as well as  Section~\ref{secGHom}) and in the context of adjoint varieties (see, for instance,  \cite{Alekseevsky2019} or Section~\ref{sec_class}).
Instead, the group $\Sp(n)$ arises by regarding $\RR^{4n}$ as $\HH^n$, that is a quaternionic linear space of dimension $n$ equipped with the
standard Hermitian metric $h$.
Then
\begin{equation*}
\Sp(n)=\{ \phi\in \GL_{n}(\HH)\mid \phi^*(h)=h\}.
\end{equation*}
In fact, $\Sp(n)$ is a maximal compact subgroup of $\Sp_{2n}(\CC)$.

\begin{theorem}[M.~Berger, 1955\footnote{The original statement of the theorem is in \cite[Th\'eor\`eme 3 (p. 318)]{Berger1955}. A sketch of the
proof can be found in \cite[Section 3.4.3]{9780198506010}.
An alternative proof is provided in  \cite{10.2307/3597350}. We suggest also \cite{Bryant1998-1999}.}]\label{thm_Berger}
Let $(M,g)$ be a simply connected and irreducible Riemannian manifold which is not locally isometric to a symmetric space. Then
\begin{equation}\label{eqListHolGr}
   \Hol(M)=\SO(n), \UU(n), \SU(n),\Sp(n)\cdot\Sp(1), \Sp(n),\Gsf_2, \text{ or } \Spin(7).
\end{equation}
In each case, the representation $\Hol(M) \subset \SO(\dim M)$ is minimal and unique up to a conjugation.
\end{theorem}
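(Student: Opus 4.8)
The plan is to convert this global, differential-geometric statement into a purely algebraic classification problem about representations of Lie algebras, and then to settle that algebraic problem by going through the list of irreducible representations of compact Lie groups case by case.

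First I would pass from the group to its Lie algebra. As already noted, for simply connected $M$ the holonomy group $\Hol(M) \subseteq \OOO(m)$ is connected, so it is determined by $\mathfrak h := \mathfrak{hol}(M) \subseteq \gotso(m)$, acting on $T_x M \cong \RR^m$. Under the standing hypotheses this representation is irreducible: by the local de Rham decomposition theorem, an invariant subspace would split $(M,g)$ locally as a Riemannian product, contrary to assumption. Next, by the Ambrose--Singer theorem, $\mathfrak h$ is spanned by the parallel transports of the curvature endomorphisms $R(u,v)$. In particular the curvature tensor at $x$ is an element of the space $\ccK(\mathfrak h)$ of \emph{formal curvature tensors} --- elements of $\gotso(m)\otimes\Wedge{2}(\RR^m)^*$ taking values in $\mathfrak h$ and satisfying the first Bianchi identity --- and the endomorphisms $R(u,v)$ it produces must generate all of $\mathfrak h$. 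This is \emph{Berger's first criterion}: a subalgebra that can occur as a holonomy algebra must be spanned by the images of some element of $\ccK(\mathfrak h)$.

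Then I would bring in \emph{Berger's second criterion} to isolate the non-symmetric case. If $M$ is not locally symmetric, then $\nabla R \neq 0$ somewhere, and the second Bianchi identity forces $\nabla R$ to lie in a certain space $\ccK^1(\mathfrak h) \subseteq (\RR^m)^*\otimes\ccK(\mathfrak h)$; hence a non-symmetric holonomy algebra must satisfy $\ccK^1(\mathfrak h) \neq 0$. At this point the theorem is reduced to an algebra problem: list all irreducible subalgebras $\mathfrak g \subseteq \gotso(m)$ such that (a) $\ccK(\mathfrak g)$ generates $\mathfrak g$, and (b) $\ccK^1(\mathfrak g) \neq 0$. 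The main work --- and the principal obstacle --- is then to run through the classification of irreducible real representations of compact semisimple Lie groups (allowing for a central torus, which accounts for the $\UU(n)$ versus $\SU(n)$ dichotomy), and for each candidate compute $\ccK(\mathfrak g)$ and $\ccK^1(\mathfrak g)$ by dominant-weight and character-formula bookkeeping. All but finitely many families are eliminated, leaving exactly $\SO(n)$ (generic), $\UU(n)$ and $\SU(n)$, $\Sp(n)\cdot\Sp(1)$ and $\Sp(n)$, and the exceptional entries $\Gsf_2$ and $\Spin(7)$, each appearing in its standard representation --- which is also the unique irreducible one of minimal dimension, giving the ``minimal and unique up to conjugation'' clause.

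Two further points remain to be handled. Berger's original list also contained $\Spin(9)\subset\SO(16)$; a separate argument (Alekseevsky, and Brown--Gray) shows that $\ccK^1$ for this representation forces local symmetry --- the metric is that of the Cayley projective plane --- so this entry is deleted. Finally, the statement as phrased only asserts that the listed groups exhaust the possibilities; the converse \emph{existence} question (that every surviving entry is realized by some non-symmetric metric) is a genuinely different and harder problem, settled case by case in the literature, most delicately for $\Gsf_2$ and $\Spin(7)$ by Bryant, and is not needed here.
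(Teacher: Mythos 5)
The paper does not prove this theorem at all: it is quoted as a classical result, with the footnote deferring to Berger's original paper, to the sketch in \cite[Section 3.4.3]{9780198506010}, and to the alternative proof of \cite{10.2307/3597350}. Your outline is a faithful reproduction of the first of these routes --- the classical Berger/Simons strategy: pass to the holonomy algebra, get irreducibility of the representation from the de Rham splitting theorem, invoke Ambrose--Singer, impose Berger's two curvature criteria ($\ccK(\mathfrak{h})$ must span $\mathfrak{h}$, and $\ccK^{1}(\mathfrak{h})\neq 0$ in the non-symmetric case), and then grind through the irreducible representations of compact groups, deleting $\Spin(9)\subset\SO(16)$ by the Alekseevsky/Brown--Gray argument. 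That is exactly the proof the paper's first two references point to, so there is no divergence to report; the cited Olmos proof is the genuinely different route (it replaces the representation-theoretic case analysis by submanifold geometry of orbits and normal holonomy), and you did not take it. Two small caveats on your sketch: Berger's first criterion should be phrased as $\mathfrak{h}$ being spanned by the images $R(u,v)$ as $R$ ranges over \emph{all} of $\ccK(\mathfrak{h})$ (equivalently, over the parallel transports of the actual curvature operators supplied by Ambrose--Singer), not by a single element; and the representation-theoretic elimination, which you correctly flag as the principal obstacle, is where essentially all of the work lives and is only gestured at --- acceptable for a sketch of a theorem of this magnitude, but it is not a proof.
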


The holonomy group reflects the geometric properties of the manifold.
Roughly, the Riemannian manifolds whose holonomy groups appear in \eqref{eqListHolGr}
are orientable manifolds, K\"ahler manifolds, Calabi--Yau manifolds, quaternion-K\"ahler manifolds, hyper-K\"ahler manifolds, $\Gsf_2$-manifolds and
$\Spin(7)$-manifolds, respectively.
In each case, the fact that the holonomy is a proper subgroup of $\OOO(T_xM)$ corresponds to the presence of an additional structure that is preserved
by parallel transport, that is a \emph{parallel structure}.
For instance, if the holonomy drops down to a subgroup of $\SO(n)$, it means that the manifold can be equipped with a parallel volume form.
In particular, the manifold itself should be orientable.
This simple example shows how the holonomy group can dictate restrictions on the topology of the underlying manifold.

In the remaining cases, the restrictions become more severe, starting from the dimension itself.
In the $\UU(n)$ and $\SU(n)$ cases, $M$ has to be even-dimensional.
For $\Sp(n)\cdot\Sp(1)$ and $\Sp(n)$ the dimension of $M$ must be a multiple of $4$,
   whereas in the last two cases  $\Gsf_2$ and $\Spin(7)$---called \emph{exceptional}---it has to be exactly $7$ and $8$, respectively.
On the top of that, several mutually compatible parallel structures arise.
Leaving the exceptional cases aside, they can be put in a nutshell as follows.

On a \emph{K\"ahler manifold} there is a parallel \emph{complex structure} $J$ which is also compatible with the metric,
  that is, it is an orthogonal transformation on each tangent space.
From this it follows that $M$ possesses a parallel symplectic form $\omega$ as well.
These three objects are \emph{mutually compatible} in the sense that $g(X,Y)=\omega(X,J(Y))$ for any vector fields $X$ and $Y$.

A \emph{Calabi--Yau manifold} is a K\"ahler manifold (of complex dimension $n$) with holonomy further reduced to $\SU(n)$.
As a consequence, there exists a (parallel) complex volume form, therefore the canonical line bundle $\Omega^n M$ is trivial
(as a complex line bundle).

\section{Quaternion-K\"ahler manifolds}\label{sec_qK}

A Riemannian manifold $(M,g)$ of real dimension $4n$ is called \emph{quaternion-K\"ahler}\footnote{For the sake of simplicity we skip the
case $n=1$,
where consistency requires using a different definition.} if the holonomy group $\Hol(M)$ is contained in  ${\Sp(n)\cdot \Sp(1)}$.

The scalar curvature of a quaternion-K\"ahler manifold $M$ is always constant. Being   either negative, zero, or positive,  it
can be always normalised to $-1$, $0$, or $1$, by simply rescaling the metric.
In the case of zero scalar curvature, the holonomy group is further reduced to a subgroup of $\Sp(n)$.
This case is called \emph{hyper-K\"ahler} and it is customary to assume that a quaternion-K\"ahler manifold has non-zero scalar curvature.

A typical example of a quaternion-K\"ahler manifold is the quaternionic projective space $\HH\PP^n$.
It is compact and simply connected, and its holonomy group is equal to $\Sp(n)\cdot \Sp(1)$.

There is a major difference between the cases of \emph{positive} and \emph{negative} scalar curvature.
There are no known examples of negative case that are simply connected and compact.
Negative examples with complete metric arise from (negative) \emph{Wolf spaces}  \cite{10.2307/24901319}.
Compact negative examples are obtained as quotients by a faithful action of a discrete group
(thus not simply connected).
Instead, compact positive examples are provided by Wolf spaces,
and every complete positive example is compact and simply connected \cite[Theorem~6.51]{besse_Einstein_mnflds}.
There exist examples of non-complete (in particular, non-compact) quaternion-K\"ahler manifolds $M$
with positive scalar curvature \cite{swann_phd_thesis}.
There is also a difference in the structure of twistor spaces, see below.
In \cite{Borwka2019} we find constructions of quaternionic manifolds with an action of the circle $S^1$,
which under additional assumptions on the initial data are (not necessarily compact) quaternion-K\"ahler manifolds,
with either positive or negative scalar curvature.

Each tangent space of a quaternion-K\"ahler manifold or a hyper-K\"ahler manifold possesses a triple of complex structures $I,J,K$,
which satisfy the standard quaternionic relations.
For quaternion-K\"ahler manifolds the linear subspace (of the space of linear endomorphisms of the tangent space)
generated by $I,J,K$ is parallel.
For  hyper-K\"ahler manifolds each complex structure is preserved individually by the parallel transport.

\begin{proposition}\label{prop_qK_by_I_J_K}
Let $(M, g)$ be a Riemannian manifold of dimension $4n$ with $n>1$.
Let $\ccEnd(TM) = \Omega^1 M \otimes T M$ denote the endomorphism bundle of the tangent bundle of~$M$ and let $\nabla$ be the Levi-Civita connection
on $(M,g)$.
\begin{itemize}
\item $M$ is quaternion-K\"ahler if and only if there exists a rank-three $\nabla$-invariant subbundle $\ccG \subset \ccEnd(TM)$, such that\/{\rm:}
\renewcommand{\theenumi}{{\rm(\alph{enumi})}}
\begin{enumerate}
\item  $\ccG$ is locally spanned by a triple $I,J,K$ of almost complex structures, and
\item\label{item_hermitian_with_respect_to_I_J_K}  $g$ is a Hermitian metric with respect to each $I$, $J$, and $K$, and
\item\label{item_IJ_eq_K} $IJ = K$.
\end{enumerate}
\item $M$ is hyper-K\"ahler if and only if there exists a triple $I,J,K$ of almost complex structures on $M$
satisfying {\rm\ref{item_hermitian_with_respect_to_I_J_K}} and {\rm\ref{item_IJ_eq_K}} above and  $\nabla I=\nabla J= \nabla K=0$.
\end{itemize}
\end{proposition}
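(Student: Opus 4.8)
The plan is to read the statement as an unwinding of the definitions of quaternion--K\"ahler and hyper--K\"ahler from Section~\ref{sec_qK} through the \emph{holonomy principle} (parallel sections, resp.\ parallel subbundles, of a natural bundle with connection correspond to holonomy-invariant vectors, resp.\ subspaces, in a fixed fibre; see e.g.~\cite[Ch.~2]{9780198506010} or \cite[Ch.~10]{besse_Einstein_mnflds}), combined with an elementary identification of two stabiliser subgroups inside $\SO(4n)$. Fix the standard model $\RR^{4n}\simeq \HH^{n}$ with its Euclidean metric, let $R_{i},R_{j},R_{k}\in\End(\RR^{4n})$ be right multiplication by the imaginary units, and put $\mathfrak{g}_{0}:=\sspan(R_{i},R_{j},R_{k})$. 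These are orthogonal complex structures, they anticommute pairwise, $R_{i}R_{j}=\pm R_{k}$, and a trace computation shows they form an orthonormal triple for the natural metric $\langle A,B\rangle=\tfrac{1}{4n}\operatorname{tr}(A^{t}B)$ on $\End(\RR^{4n})$. The two group facts I will use are: $(\ast)$ the subgroup of $\SO(4n)$ that normalises the $3$-plane $\mathfrak{g}_{0}$ (acting by conjugation) equals $\Sp(n)\cdot\Sp(1)$; $(\ast\ast)$ the subgroup fixing $R_{i},R_{j},R_{k}$ individually equals $\Sp(n)$. Both follow by observing that $\Sp(n)$, acting on $\HH^{n}$ by quaternion-linear isometries, commutes with all $R_{\bullet}$, that right multiplication by unit quaternions gives the surjection $\Sp(1)\twoheadrightarrow\SO(\mathfrak{g}_{0})\simeq\SO(3)$, and conversely that an orthogonal map normalising $\mathfrak{g}_{0}$ becomes quaternion-linear after composing with a suitable element of $\Sp(1)$. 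The hypothesis $n>1$ enters exactly here: for $n=1$ one has $\Sp(1)\cdot\Sp(1)=\SO(4)$, $(\ast)$ is vacuous, and one is forced to adopt the separate convention alluded to in Section~\ref{sec_qK}.

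For the two forward implications I apply the holonomy principle to the bundle $\ccEnd(TM)$ with the connection induced by $\nabla$ (which preserves the natural fibre metric). If $M$ is quaternion--K\"ahler, then after an orthonormal identification $T_{x}M\simeq\RR^{4n}$ we have $\Hol_{x}(M)\subseteq\Sp(n)\cdot\Sp(1)=N_{\SO(4n)}(\mathfrak{g}_{0})$, so the $3$-plane corresponding to $\mathfrak{g}_{0}$ is $\Hol_{x}(M)$-invariant and hence extends to a $\nabla$-invariant rank-three subbundle $\ccG\subset\ccEnd(TM)$. Trivialising $\ccG$ locally and running Gram--Schmidt for the induced metric gives a local orthonormal frame $I,J,K$; every unit element of $\ccG_{x}$ is conjugate, by an element of $\SO(4n)$, to $R_{i}$, hence is itself an orthogonal complex structure, which yields \ref{item_hermitian_with_respect_to_I_J_K} and the almost-complex condition, and after possibly flipping the sign of $K$ on each connected patch we get $IJ=K$, i.e.\ \ref{item_IJ_eq_K}. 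The hyper--K\"ahler case is identical with $\Sp(n)$ in place of $\Sp(n)\cdot\Sp(1)$: now each of the three standard complex structures is individually $\Hol_{x}(M)$-invariant by $(\ast\ast)$, so the holonomy principle produces globally defined \emph{parallel} almost complex structures $I,J,K$ satisfying \ref{item_hermitian_with_respect_to_I_J_K}, \ref{item_IJ_eq_K}.

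For the converse implications I work pointwise. Given $\ccG$ as in the statement and a point $x$, condition \ref{item_hermitian_with_respect_to_I_J_K} together with \ref{item_IJ_eq_K} forces $I_{x}$ and $J_{x}$ to be anticommuting orthogonal complex structures (from $K_{x}^{2}=-\Id$ one deduces $J_{x}I_{x}=-I_{x}J_{x}$), so $(T_{x}M,g_{x},I_{x},J_{x},K_{x})$ is a quaternionic Hermitian space and elementary linear algebra furnishes an orthonormal isomorphism $T_{x}M\simeq\HH^{n}$ taking $I_{x},J_{x},K_{x}$ to $R_{i},R_{j},\pm R_{k}$ and hence $\ccG_{x}$ to $\mathfrak{g}_{0}$. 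Since parallel transport is a $g$-isometry and preserves $\ccG$ (as $\ccG$ is $\nabla$-invariant), under this identification $\Hol_{x}(M)$ lands in $N_{\SO(4n)}(\mathfrak{g}_{0})=\Sp(n)\cdot\Sp(1)$ by $(\ast)$, so $M$ is quaternion--K\"ahler. If moreover $I,J,K$ are globally defined with $\nabla I=\nabla J=\nabla K=0$, parallel transport fixes $I_{x},J_{x},K_{x}$ individually, so $\Hol_{x}(M)$ lands in the pointwise stabiliser $\Sp(n)$ by $(\ast\ast)$ and $M$ is hyper--K\"ahler.

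I expect the one genuinely non-formal ingredient to be the group-theoretic facts $(\ast)$ and $(\ast\ast)$ --- identifying $\Sp(n)\cdot\Sp(1)$ and $\Sp(n)$ as the normaliser and the centraliser of the standard quaternionic $3$-plane in $\SO(4n)$; everything else is the holonomy principle plus bookkeeping. A secondary nuisance is tidying up the sign ambiguities ($R_{i}R_{j}=\pm R_{k}$, coming from the choice of left versus right quaternionic multiplication, and the orientation of the $\SO(3)$-bundle $\ccG$) so that the local frame is a bona fide triple with $IJ=K$ rather than $IJ=-K$.
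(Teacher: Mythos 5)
Your argument is correct and is essentially the canonical proof: the paper itself does not prove this proposition but delegates it to Sections~1.1--1.2 of Swann's thesis, and the argument there (as in Besse or Salamon) is exactly your combination of the holonomy principle with the identification of $\Sp(n)\cdot\Sp(1)$ and $\Sp(n)$ as, respectively, the normaliser and the centraliser of the quaternionic $3$-plane $\mathfrak{g}_0=\sspan(R_i,R_j,R_k)$ inside $\SO(4n)$. The only points needing care are the ones you already flag --- fixing the sign so that $IJ=K$ rather than $IJ=-K$ on each connected patch, and the hypothesis $n>1$ which keeps the normaliser condition from being vacuous --- so there is nothing substantive to add.
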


A proof of both claims of Proposition \ref{prop_qK_by_I_J_K} can be found scattered throughout Sections 1.2 and 1.1 of \cite{swann_phd_thesis},
respectively.

The local almost complex structures $I,J,K$ should be interpreted analogously to the actions of elementary quaternions $i,j,k$ on $\HH$.
In particular, any element $ai + bj + ck$ with $a^2 + b^2+ c^2 =1$ is a unitary purely quaternion number and determines an embedding $\CC \subset
\HH$. Similarly, for a hyper-K\"ahler manifold $M$, any $aI + bJ + cK$ (again with norm~$1$) determines a complex structure on $M$.
If $M$ is a quaternion-K\"ahler manifold instead, usually there is no well-defined global complex structure, as the choices of $i$, $j$, $k$ vary
between the tangent spaces. For instance, $\HH\PP^n$ has no complex structure.
Instead, there is a quaternion structure, and  a well-defined sphere subbundle of $\ccG$, which consists of the $aI + bJ + cK$ with norm $1$.
This sphere bundle is called the \emph{twistor space} of $M$, and we denote it here by $X$.\footnote{In the differential geometry literature the
twistor space of $M$ is traditionally denoted by $Z$, see for instance \cite{Salamon1982}, \cite{LeBrun1994}.}

Let $X \stackrel{\pi}{\to} M$ be the natural projection.
Since $\dim M=4n$ and each fibre is two-dimensional, the real dimension of $X$ is equal to $4n+2$.
Despite $M$ usually has no complex structure,
  the non-uniqueness in the choice of $I$, $J$, $K$ in Proposition~\ref{prop_qK_by_I_J_K} is precisely resolved by going into the twistor space $X$,
  which has a natural complex structure.
Roughly, it comes from the orthogonal decomposition $T_x X = T_{m} M \oplus T_x X_m$ determined by the Levi-Civita connection $\nabla$,
   where $m =\pi(x)$ and $X_m = \pi^{-1}(m) \simeq S^2 = \CC\PP^1$.
The complex structure on $T_m M$ is given by $x \in \End(T_m M)$, while the isomorphism of~$S^2$ with $\CC\PP^1$ comes
from presenting locally the sphere bundle as a projectivisation of a (complex) vector bundle of rank two.\footnote{See \cite[Theorem~4.1]{Salamon1982}
for details. See also \cite[Section 7.1.3]{9780198506010} for the analogous construction in the case of hyper-K\"ahler manifolds.}
From now on we will view $X$ as a complex manifold of complex dimension $2n+1$.

In particular, there is a globally well-defined vector subbundle $F \subset TX$ which corresponds to choosing the horizontal part $T_{m} M$
  at every point as the orthogonal complement to the tangent space of the fibre $T_x X_m$
  (again, remember that the orthogonal splitting $T_x X = T_{m} M \oplus T_x X_m$  depends on the choice of connection
  and here we only consider the choice determined by the Levi-Civita connection).
Salamon in \cite[Theorem~4.3]{Salamon1982} shows that the subbundle $F$ is a holomorphic subbundle
and it defines a (complex) contact structure on the complex manifold $X$
(in the sense of Section~\ref{secCont}).

From now on we assume that $M$ is a \emph{positive quaternion-K\"ahler} manifold, that is a complete quaternion-K\"ahler manifold with positive scalar
curvature. In particular, $M$~is compact \cite[Theorem~6.51]{besse_Einstein_mnflds}.
In this case the twistor space $X$ admits a K\"ahler--Einstein metric of positive scalar curvature \cite[Theorem~6.1]{Salamon1982}.
In the language of algebraic geometry, this implies that the complex manifold $X$ is \emph{Fano}.
Thus from a positive quaternion-K\"ahler manifold $M$ the twistor space construction produces a contact Fano manifold $X$
   admitting a K\"ahler--Einstein metric.
Eventually, LeBrun \cite[Theorem~A]{doi:10.1142/S0129167X95000146} proved the inverse:
   if a complex contact Fano manifold admits a K\"ahler--Einstein metric, then it is the twistor space of a positive quaternion-K\"ahler manifold $M$.
Consequently:

\begin{theorem}[Positive quaternion-contact correspondence]\label{thm_qK_contact_correspondence}
The twistor construction provides a bijection between\/{\rm:}
\begin{itemize}
    \item the set of positive quaternion-K\"ahler manifolds up to homotheties $($rescalings of the metric$)$, and
    \item the set of complex contact Fano manifolds admitting a K\"ahler--Einstein metric up to biholomorphisms.
\end{itemize}
\end{theorem}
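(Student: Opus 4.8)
The plan is to splice together the two deep results recalled just above --- Salamon's twistor construction on the quaternion-K\"ahler side and LeBrun's converse on the contact side --- and then to check that the resulting assignments are well defined on the indicated equivalence classes and mutually inverse; only the last point requires genuine work. For the forward map, let $(M,g)$ be a positive quaternion-K\"ahler manifold of real dimension $4n$. Its twistor space $X$ was constructed above as the unit sphere subbundle of $\ccG \subset \ccEnd(TM)$, carrying the integrable complex structure induced by the $\nabla$-orthogonal splitting $T_xX = T_mM \oplus T_xX_m$; by Salamon's theorem the horizontal subbundle $F \subset TX$ is holomorphic and defines a complex contact structure, so $\dim_\CC X = 2n+1$, and since $M$ is complete with positive scalar curvature it is compact (Besse) and $X$ carries a K\"ahler--Einstein metric of positive scalar curvature (Salamon), hence $X$ is Fano. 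Thus $M \mapsto (X,F)$ lands in the target set, and since rescaling $g$ by a positive constant leaves the Levi-Civita connection $\nabla$ --- and therefore $X$ and $F$ --- unchanged, the assignment descends to a map $\Phi$ from homothety classes of positive quaternion-K\"ahler manifolds to biholomorphism classes of contact Fano manifolds admitting a K\"ahler--Einstein metric.

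For the backward map I invoke LeBrun's theorem: every complex contact Fano manifold $X$ admitting a K\"ahler--Einstein metric is the twistor space of a positive quaternion-K\"ahler manifold $M(X)$. To promote this to a well-defined inverse $\Psi$ on equivalence classes one needs (i)~that $M(X)$ be independent, up to homothety, of the choice of K\"ahler--Einstein metric and of any auxiliary data in LeBrun's reconstruction, and (ii)~that biholomorphic contact Fano manifolds yield homothetic $M(X)$'s. For both I would use that the contact distribution on a contact Fano manifold is intrinsic to the complex structure --- being read off from the canonical bundle via~\eqref{eqLinBundContMan} --- so that every biholomorphism automatically intertwines the contact structures, together with the fact that the K\"ahler--Einstein metric on a Fano manifold, when it exists, is essentially canonical (unique up to scaling and biholomorphism, by Bando--Mabuchi), the scaling being pinned down by the normalisation of the scalar curvature. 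Since LeBrun reconstructs $g$ canonically from the complex contact structure $(X,F)$ and this essentially unique K\"ahler--Einstein datum, a biholomorphism $X_1 \simeq X_2$ carries the reconstructed metrics into homothetic ones, and the special case of the identity gives~(i).

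It remains to see that $\Phi$ and $\Psi$ are mutually inverse. The relation $\Phi \circ \Psi = \id$ is exactly the content of LeBrun's theorem, namely that $\Phi(M(X))$ is biholomorphic to $X$. For $\Psi \circ \Phi = \id$ one checks that, starting from $M$ and forming $X = \Phi(M)$, LeBrun's procedure extracts precisely the twistor fibration $X \to M$ together with its horizontal distribution $F$, and hence returns $M$ with its original quaternion-K\"ahler metric up to homothety; granting the uniqueness discussed above, this is bookkeeping. The genuine obstacle is therefore neither of the two cited theorems --- which we are free to invoke --- but the injectivity of $\Phi$: the assertion that the positive quaternion-K\"ahler metric on the base is determined, up to homothety, by the biholomorphism type of the complex contact Fano manifold alone. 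Once the intrinsic nature of the contact distribution on a Fano contact manifold and the uniqueness up to scale of its K\"ahler--Einstein metric are in place, the bijection follows formally.
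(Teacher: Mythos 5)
Your architecture is the same as the paper's: the survey does not write out a proof but assembles exactly the pieces you use --- Salamon's theorem for the forward map (holomorphicity of $F$, the contact structure, and the K\"ahler--Einstein metric making $X$ Fano), LeBrun's Theorem~A for surjectivity, and then it explicitly delegates the remaining content (well-definedness and injectivity on equivalence classes) to \cite[Theorem~3.2]{LeBrun1994}. You correctly locate the genuine obstacle in the injectivity of $\Phi$, which is precisely what that cited theorem supplies.

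One step of your sketch does not hold up as written: you assert that the contact distribution on a contact Fano manifold is ``intrinsic to the complex structure, being read off from the canonical bundle via~\eqref{eqLinBundContMan}''. That isomorphism only determines the quotient line bundle $L$ as an $(n+1)$-st root of the anticanonical bundle; it says nothing about which hyperplane subbundle $F\subset TX$ with quotient $L$ is the contact one, so it does not by itself show that an arbitrary biholomorphism intertwines the contact structures. The uniqueness of the contact structure on a Fano contact manifold is a nontrivial theorem of LeBrun (roughly, $H^0(X,\Omega^1_X\otimes L)$ is one-dimensional in the relevant range), and it is exactly this, together with the Bando--Mabuchi-type uniqueness of the K\"ahler--Einstein metric that you do invoke, that makes $\Psi$ well defined on biholomorphism classes. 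So your plan is sound and matches the paper's, but this particular justification needs to be replaced by a citation of (or an argument for) the uniqueness of the contact structure; as stated it is a gap.
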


This theorem, in addition to the results mentioned above, incorporates \cite[Theorem~3.2]{LeBrun1994}.

Looking back at the statement of Theorem~\ref{thm_Berger}, a major problem throughout history was to determine whether all the possible holonomy
groups could actually be realised by a manifold $M$.
Recall that $M$ has to be looked for among manifolds that are not locally isometric to a symmetric space.\footnote{In fact, the original theorem of
Berger contained an additional  case, later dismissed as impossible.} For instance,  our prototype example $\HH\PP^n$ does not qualify as an example
of such an $M$ for the holonomy group $\Sp(n)\cdot \Sp(1)$, as this is a symmetric space with the isometry group $\Sp(n+1)/\ZZ_2$.
It turns out that all the groups listed in Theorem~\ref{thm_Berger} can be realised locally as holonomy groups of non-symmetric Riemannian manifolds.
However, it is still not known whether there exists a simply connected compact quaternion-K\"ahler manifold, which is not isometric to a symmetric
space. As shown in Theorem~\ref{thm_qK_contact_correspondence},  this problem is translated into the world of algebraic geometry.
We elaborate more on this problem in Section~\ref{sec_class}.

\section{Examples, classification statements and expectations}\label{sec_class}

Having illustrated the correspondence between complex contact manifolds and quaternion-K\"ahler geometry,
  we slowly move back to the complex contact manifolds defined and introduced in Section~\ref{secCont}.
Our first elementary examples of contact manifolds (projective space and a projectivisation of a cotangent bundle) are given above as
Examples~\ref{ex_CP_2n+1} and~\ref{ex_PTY}.
We now discuss another class of examples, namely the adjoint orbits.
They arise from complex Lie groups, such as $\SL_n(\CC)$, $\Sp_{2n}(\CC)$, $\SO_n(\CC)$ (or $\Spin_n(\CC)$) and exceptional groups. Note that these
are not the compact Lie groups discussed in the context of holonomy in Section~\ref{sec_holonomy}.
In particular, none of them is compact. The main cases of interest are the simple groups, and we restrict our presentation to those. (A reader
interested in non-compact examples of contact manifolds should also consider the semi-simple groups below.)

\begin{example}\label{exAdjCont}
   Suppose $G$ is a simple complex Lie group and denote by $\gotg$ its Lie algebra.
   Consider the adjoint action of $G$ on $\PP(\gotg)$.
   Then every odd-dimensional orbit is a contact manifold.
   In particular, the unique closed orbit $X_{\gotg}$ is a projective contact manifold called the \emph{adjoint variety} of $G$.
\end{example}

The proof is presented in \cite[\S2]{Beauville1998}.
Roughly, the contact structure comes from the symplectic form (Kostant--Kirillov form) on the orbits of the action of $G$ on $\gotg$.
More generally, any contact manifold arises from a $\CC^*$-equivariant holomorphic symplectic structure,
see \cite[\S C.5 and Theorem~E.6]{jabu_dr} for an exposition and references.

The adjoint varieties partially overlap with the previous examples.
If $G$ is of type $\mathsf{A}_{n+1}$ (that is, $G = \SL_{n+2}(\CC)$ or its finite quotient),
then the adjoint variety $X_{\gotg}$ of~$G$ is $\PP(T^*\CC\PP^{n+1})$. This is a special case of the varieties from  Example~\ref{ex_PTY} and it is
going to  be the central topic  of Section~\ref{secMonge}.
If $G$ is of type $\mathsf{C}_{n+1}$ (say, $G= \Sp_{2n+2}(\CC)$), then $X_{\gotg} = \CC\PP^{2n+1}$.
Note that the embedding $\CC\PP^{2n+1} = X_{\gotg} \subset \PP(\gotg)\simeq \CC\PP^{2n^2 +5n+2}$ is nontrivial---it is the second Veronese embedding.
Any other adjoint variety is a new example:
if $G$ is of type $\mathsf{B}_{\bullet}$ or $\mathsf{D}_{\bullet}$ (say, $G = \SO(n+4)$), then $X_{\gotg} = \Gr(\CC\PP^1, Q^{n+2})$,
that is the (complex) isotropic Grassmannian of projective lines contained in a smooth $(n+2)$-dimensional quadric hypersurface $Q^{n+2} \subset
\CC\PP^{n+3}$.
Also when $G$ is one of the (complex) exceptional groups $\mathsf{G}_2$, $\mathsf{F}_4$, $\mathsf{E}_6$, $\mathsf{E}_7$ or $\mathsf{E}_8$, then
$X_{\gotg}$ is a \emph{generalised Grassmannian}, that is a quotient of~$G$ by a maximal parabolic subgroup (corresponding to the highest root
of ~$\gotg$). See also Table~\ref{tab_list_of_adjoint}.

\begin{table}[hbt]
   \begin{center}
   \begin{tabular}{||c|c|c|c|c||}
   \hline
   \hline
      type     & $X$           & $\dim X$ & $M$ & $\ccC_x$ \\
   \hline
   \hline
      $\mathsf{A}_{n+1}$& $\PP(T^*\CC\PP^{n+1})$ & $2n+1$   & $\Gr(\CC^2, \CC^{n+2})$ &$\CC\PP^{n-1}\sqcup\CC\PP^{n-1 *}$ \\
   \hline
      $\mathsf{C}_{n+1}$& $\PP^{2n+1}$ & $2n+1$   & $\HH\PP^{n}$ & $\emptyset$\\
   \hline
      $\mathsf{B}_{\bullet}$, $\mathsf{D}_{\bullet}$ & $\Gr(\CC\PP^1, Q^{n+2})$& $2n+1$& $\widetilde{\Gr}(\RR^4, \RR^{4+n})$& 
         $\CC\PP^1\times Q^{n-2}$\\
   \hline
      $\mathsf{G}_2$ & $\mathsf{G}_2$ adjoint variety& $5$& $\mathsf{G}_2$-Wolf space & $\CC\PP^1$, twisted cubic\\
   \hline
      $\mathsf{F}_4$ & $\mathsf{F}_4$ adjoint variety& $15$& $\mathsf{F}_4$ Wolf space & $\LGr(3,6)$\\
   \hline
      $\mathsf{E}_6$ & $\mathsf{E}_6$ adjoint variety& $21$& $\mathsf{E}_6$ Wolf space & $\Gr(3,6)$\\
   \hline
      $\mathsf{E}_7$ & $\mathsf{E}_7$ adjoint variety& $33$& $\mathsf{E}_7$ Wolf space & $\mathbb{S}_6$\\
   \hline
      $\mathsf{E}_8$ & $\mathsf{E}_8$ adjoint variety& $57$& $\mathsf{E}_8$ Wolf space & $\mathsf{E}_7$-subadjoint variety\\
   \hline
   \hline
   \end{tabular}
   \end{center}
   \caption{The list of adjoint varieties $X$, corresponding Wolf spaces 
            $M$ and subadjoint varieties~$\ccC_x$ (see Section~\ref{SecAdj} for a definition and discussion of subadjoint varieties).}\label{tab_list_of_adjoint}
   
\end{table}

Since the adjoint varieties are homogeneous spaces, they are all Fano (roughly because the tangent bundle, and hence also the anticanonical line
bundle, have plenty of sections).
In fact, these are the only known contact Fano manifolds.
Moreover, they all admit K\"ahler--Einstein metrics.
That is, in accordance with Theorem~\ref{thm_qK_contact_correspondence},
   they are twistor spaces of positive quaternion-K\"ahler manifolds.
All these quaternion-K\"ahler manifolds are symmetric spaces,
   and they are called  Wolf spaces of compact simple Lie groups,
   and naturally, they exhaust all the known examples of positive quaternion-K\"ahler manifolds.
Explicitly, the Wolf spaces are the complex Grassmannian $\Gr(\CC^2, \CC^{n+2})$,  the quaternion projective space $\HH\PP^n$,
   the real Grassmannian of oriented subspaces $\widetilde{\Gr}(\RR^4, \RR^{n+4})$, and the exceptional cases.
Note that  $\Gr(\CC^2, \CC^{n+2})$ is the unique (positive) Wolf space  that has a global complex structure.
However, even in this case the twistor map  ${\PP(T^*\CC\PP^{n+1}) \to \Gr(\CC^2, \CC^{n+2})}$ is \emph{not} holomorphic.

Examples~\ref{ex_CP_2n+1}, \ref{ex_PTY}, \ref{exAdjCont} exhaust the list of known compact complex contact manifolds.
This difficulty of finding new examples, with further evidence, leads to a statement of LeBrun--Salamon conjecture.\footnote{Originally,
\cite[p.~110]{LeBrun1994} contained only a statement that  ``it is tempting to conjecture\dots'', that concerned a weaker claim about
quaternion-K\"ahler manifolds (thus, in view of Theorem~\ref{thm_qK_contact_correspondence}, with an additional assumption that $X$ is Fano and
admits a K\"ahler--Einstein metric).
Nevertheless, at least in informal discussions, the stronger claim has became known as the conjecture attributed to LeBrun and Salamon.}

\begin{conjecture}\label{conj_LeBrun_Salamon}
If $(X,F)$ is a projective complex contact manifold, then $(X,F)$ is either
\begin{itemize}
\item the projectivisation of the cotangent bundle of a projective manifold $($Example~{\rm\ref{ex_PTY})}, or
\item an adjoint variety $($Example~{\rm\ref{exAdjCont})}.
\end{itemize}
\end{conjecture}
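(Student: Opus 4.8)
This statement is a conjecture, and the natural proof programme splits according to whether the contact manifold $X$ is Fano. From $\Wedge{2n+1}TX\simeq L^{\otimes(n+1)}$ (see \eqref{eqLinBundContMan}) one has $-K_X=(n+1)L$, so $X$ is Fano precisely when the contact line bundle $L$ is ample; moreover it is known (Kebekus--Peternell--Sommese--Wi\'sniewski, via the Cone Theorem) that a projective contact manifold has Picard number $\rho(X)=1$ when it is Fano and $\rho(X)\ge2$ otherwise, so the same split is a split on $\rho$.

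\emph{The non-Fano case.} Here one shows directly that $X$ is a projectivised cotangent bundle as in Example~\ref{ex_PTY}. As $-K_X$ is not ample, the Cone Theorem yields an extremal contraction $\varphi\colon X\to Y$; using that a contact line cannot be contracted to a point, together with the behaviour of $\ud\theta_F$ along the fibres of $\varphi$, one argues that $\varphi$ is a $\CC\PP^n$-bundle and that its relative differential identifies the contact distribution with $(\ud\varphi)^{-1}\bigl(\ccO(1)\bigr)$, whence $X\simeq\PP(T^*Y)$. This half of the programme is complete (Kebekus--Peternell--Sommese--Wi\'sniewski; Demailly).

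\emph{The Fano case.} It remains to prove that a contact Fano manifold is an adjoint variety, and by Boothby's classification of compact homogeneous complex contact manifolds it is enough to prove that every contact Fano manifold is homogeneous. Here the tool is the variety of minimal rational tangents (Sections~\ref{SecVMRT} and \ref{secCone}): such an $X$ is covered by contact lines, and through a general point $x$ the VMRT $\ccC_x\subset\PP(F_x)\cong\CC\PP^{2n-1}$ is either empty---which forces $X\cong\CC\PP^{2n+1}$, the adjoint variety of type $\mathsf C_{n+1}$---or a smooth irreducible Legendrian subvariety of dimension $n-1$ for the symplectic form (well defined up to scale) induced on $F_x$ by $\ud\theta_F$. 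If one can show that $\ccC_x$ is projectively equivalent to the subadjoint variety of some simple Lie algebra $\gotg$ (Section~\ref{SecAdj}), then the Hwang--Mok Cartan--Fubini type extension theorem upgrades this fibrewise isomorphism of cone structures to a biholomorphism $X\simeq X_{\gotg}$, finishing the proof.

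\emph{The main obstacle.} The crux is exactly the identification of $\ccC_x$ with a subadjoint variety, equivalently the classification of the smooth Legendrian subvarieties of $\CC\PP^{2n-1}$ that can occur as the VMRT of a contact Fano manifold; smooth Legendrian subvarieties are far from classified in general, so one must extract extra rigidity from the global contact structure. Two complementary lines of attack present themselves: (i) prove that the contact cone structure is locally equivalent to that of a rational homogeneous space, through Tanaka--Yamaguchi prolongation of the contact grading and a flatness/Cartan-connection argument, which would force $\ccC_x$ to be homogeneous; and (ii) use LeBrun's theorem that a contact Fano manifold with $\rho(X)=1$ has non-finite automorphism group---hence carries a nontrivial $\CC^*$-action---and then pin $X$ down through the Bia\l ynicki--Birula decomposition, the contact moment map, and the combinatorics of the fixed loci, which is the strategy of the recent programme on contact Fano manifolds with torus actions. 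In low dimensions the conjecture has been verified unconditionally (LeBrun--Salamon and subsequent work), but carrying either line (i) or (ii) through in full generality---in particular ruling out hypothetical non-homogeneous VMRTs---is precisely the part that remains open.
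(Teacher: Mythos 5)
This statement is a conjecture, and the paper does not prove it either; your proposal correctly treats it as an open problem and reconstructs essentially the same programme the paper surveys: the Kebekus--Peternell--Sommese--Wi\'sniewski/Demailly reduction (Theorem~\ref{thm_KPSW_D}) to the contact Fano case with $\Pic X=\ZZ\cdot[L]$, the reduction of that case to identifying the VMRT with a subadjoint variety followed by Mok's recognition theorem (Theorem~\ref{thm_contact_recognised_by_VMRT}), and the partial results via automorphisms and torus actions (Theorem~\ref{thm_automorphisms}). Two factual slips are worth correcting. First, the Fano/non-Fano dichotomy is \emph{not} the same as $\rho(X)=1$ versus $\rho(X)\ge 2$: the adjoint variety $\PP(T^*\CC\PP^{n+1})$ of type $\mathsf{A}$ is Fano with $\rho=2$, and $\CC\PP^{2n+1}$ is Fano with $\rho=1$ but $L$ divisible; the correct trichotomy is the one in Theorem~\ref{thm_KPSW_D}, whose cases overlap. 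Second, you assert that the VMRT $\ccC_x$ is irreducible, but the paper explicitly flags this as unknown --- the claimed proof in the literature has a gap (see the discussion following Theorem~\ref{thm_contact_recognised_by_VMRT} citing \cite{jabu_contact_duality_and_integrability}) --- so only smoothness, the Legendrian property, and linear non-degeneracy of $\ccC_x$ may be used unconditionally.
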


It is also tempting to replace ``projective'' with ``compact'' in the conjecture,
   but there is not much evidence for such a claim, except perhaps in dimension $3$
   (see \cite{frantzen_peternell}, \cite{peternell_jabu_contact_3_folds},~\cite{peternell_schrack_contact_Kaehler_mflds}).
Without compactness, there are quasi-projective examples of contact manifolds resembling the adjoint varieties constructed
   in \cite{hwang_manivel_quasi_complete_contact}.
It is not clear if there is any relation of these quasi-projective examples
   to the non-complete examples of quaternion-K\"ahler manifolds in \cite{swann_phd_thesis}.

The main result towards the conjecture is the following classification theorem by Kebekus, Peternell, Sommese, Wi\'sniewski and Demailly.

\begin{theorem}\label{thm_KPSW_D}
If $(X, F)$ is a projective complex contact manifold  and $L=TX/F$ is the quotient line bundle, then  $(X,F)$ is either
\begin{itemize}
\item the projectivisation of the cotangent bundle of a projective manifold $($Example~{\rm\ref{ex_PTY})}, or
\item a projective space $($Example~{\rm\ref{ex_CP_2n+1})}, or
\item a contact Fano manifold such that $\Pic X = \ZZ \cdot [L]$ $($that is, all the complex line bundles on $X$ are isomorphic to tensor powers
of $L$ or its dual\/$)$.
\end{itemize}
\end{theorem}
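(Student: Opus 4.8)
The plan is to use the fact that a projective complex contact manifold $(X,F)$ is uniruled — indeed covered by contact lines — and then to run a Mori-theoretic dichotomy based on the Picard rank and the structure of the extremal contraction. First I would recall why $X$ is uniruled: by \eqref{eqLinBundContMan} the anticanonical bundle is $\Wedge{2n+1}TX \simeq L^{\otimes(n+1)}$, so it suffices to show that $L$ is ``positive enough'' along some curve; more precisely, one shows that $X$ carries a covering family of rational curves $\ell$ with $L\cdot \ell > 0$ (the contact lines), so $-K_X\cdot \ell = (n+1)(L\cdot\ell)>0$ and $X$ is Fano-like along that family, hence uniruled by Mori's bend-and-break technique. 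Consequently $K_X$ is not nef, so by the Cone Theorem there is a $K_X$-negative extremal ray $R$ and an associated Mori contraction $\varphi\colon X \to Y$.

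The heart of the argument is to analyse this contraction $\varphi$ and show it is compatible with the contact structure. The key point — this is where I expect the main obstacle to lie — is to show that the contact form $\theta$ imposes rigidity on $\varphi$: one proves that either $\varphi$ is of fibre type with one-dimensional fibres and $Y$ smooth (leading to $X \simeq \PP(T^*Y)$), or $\varphi$ contracts $X$ to a point (forcing $X$ to be Fano with $\Pic X = \ZZ$). The mechanism is that along any fibre $Z$ of $\varphi$, the restriction of the distribution $F$ and the non-degeneracy of $\ud\theta_F$ severely constrain the normal bundle and hence the dimension of $Z$; in the ``cotangent bundle'' case one checks that the contact distribution is precisely the relative contact structure $(\ud\pi)^{-1}(H)$ of Example~\ref{ex_PTY}, which requires identifying the tautological subbundle and matching $L = \ccO_{\PP(T^*Y)}(1)$ via \eqref{eqLinBundContMan}. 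The delicate part is excluding intermediate behaviour: small contractions and fibre-type contractions with higher-dimensional fibres both have to be ruled out, and this uses a careful deformation-theoretic study of the contact lines lying in the fibres together with the adjunction-type formula \eqref{eqLinBundContMan} restricted to those fibres.

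Once the case $\Pic X = \ZZ$ is isolated, it remains to see that the generator must be (a rational multiple of) $[L]$ and that $X$ is Fano: since $-K_X = (n+1)L$ by \eqref{eqLinBundContMan}, ampleness of $-K_X$ (which follows because the extremal ray covering $X$ now spans the whole Mori cone, $X$ having Picard number one) gives that $L$ is ample and that $\Pic X = \ZZ\cdot[L]$, exactly as claimed; one should also verify $X$ is not a projective space here, the projective-space case having been separated off precisely as the ``degenerate'' contraction to a point with the $\PP^{2n+1}$ contact structure of Example~\ref{ex_CP_2n+1}, detected e.g.\ by $L = \ccO(2)$ being divisible. The remaining bookkeeping — that the two fibre-type alternatives are mutually exclusive and exhaustive, that $Y$ in the cotangent case is forced to be projective and smooth, and that the contact structures match on the nose — is routine given the Mori-theoretic input, and I would only sketch it. The serious mathematical content, and the step I would flag as the main hurdle, is the interplay between non-degeneracy of $\ud\theta_F$ and the geometry of the extremal fibres; this is the technical core of the Kebekus–Peternell–Sommese–Wiśniewski argument, with Demailly's contribution supplying the case analysis needed when the extremal contraction is not immediately of the expected type.
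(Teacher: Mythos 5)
Your overall strategy (Mori theory applied to a $K_X$-negative extremal ray, followed by a case analysis of the contraction) is indeed the route taken by Kebekus--Peternell--Sommese--Wi\'sniewski, whom the paper cites for the main trichotomy. But your very first step contains a genuine gap that inverts the logical order of the actual proof: you launch the argument by asserting that $X$ is covered by contact lines, hence uniruled, hence $K_X$ is not nef. For an abstract projective contact manifold none of this is known a priori --- contact lines are only constructed once one already knows $X$ is Fano (or of the form $\PP(T^*Y)$, whose fibres supply them) --- so invoking them at the outset is circular. What KPSW actually prove is a trichotomy whose third branch is ``$K_X$ is nef'', and the whole point of Demailly's contribution is to exclude that branch, i.e.\ to prove that the canonical bundle of a projective contact manifold cannot be nef; that is what entitles you to the Cone Theorem and to an extremal ray at all. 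Your closing sentence, which casts Demailly's role as ``supplying the case analysis needed when the extremal contraction is not of the expected type'', misattributes this: without Demailly's theorem there may be no contraction to analyse.

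A second, smaller but still real, gap is in the endgame. Once $X$ is known to be Fano with $\Pic X\simeq\ZZ$, separating $\CC\PP^{2n+1}$ from the case $\Pic X=\ZZ\cdot[L]$ is not achieved by observing that the contraction to a point is ``degenerate'' or that $L=\ccO(2)$ is divisible; the paper's argument is an index computation: since $K_X\simeq (L^*)^{\otimes(n+1)}$ by \eqref{eqLinBundContMan}, the index of $X$ is divisible by $n+1$ and at most $2n+2$, so it equals either $2n+2$ --- in which case $X\simeq\CC\PP^{2n+1}$ by the Kobayashi--Ochiai theorem --- or $n+1$, in which case $L$ is indivisible and generates $\Pic X$. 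Without Kobayashi--Ochiai, divisibility of the generator does not by itself force $X$ to be a projective space. (A minor further slip: the fibres of $\PP(T^*Y)\to Y$ are $n$-dimensional, not one-dimensional, so the fibre-type alternative you describe is misstated.)
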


Historically, \cite{4authors} shows that either $X$ is as in Example~\ref{ex_PTY}, or it is Fano and ${\Pic X \simeq \ZZ}$,
   or the canonical line bundle $K_X=\Wedge{\dim X} T^*X$ is nef.
Next \cite{demailly} excludes the last possibility.
In the case $X$ is Fano of dimension $2n+1$ and $\Pic X \simeq \ZZ$,
   since the canonical bundle $K_X \simeq (L^*)^{\otimes (n+1)}$ is divisible by $(n+1)$,
   the \emph{index} of $X$ is a positive integer equal to at most $\dim X +1 = 2n+2$, and divisible by $(n+1)$.
If the index is $2n+2$, then $X\simeq \CC\PP^{2n+2}$ by \cite{kobayashi_ochiai}.
Otherwise, the index is $n+1$ and $L$ is not divisible in the Picard group, hence $\Pic X = \ZZ \cdot[L]$.

Theorem~\ref{thm_KPSW_D} reduces Conjecture~\ref{conj_LeBrun_Salamon} to the following case:

\begin{conjecture}\label{conj_LeBrun_Salamon_for_Fano}
    If $(X,F)$ is a contact Fano manifold such that $\Pic X = \ZZ \cdot [L]$,
       then $(X,F)$ is an adjoint variety.
\end{conjecture}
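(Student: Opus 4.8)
Since Conjecture~\ref{conj_LeBrun_Salamon_for_Fano} is the remaining open case of the LeBrun--Salamon conjecture, no proof is known; what follows is the program one would pursue, built around varieties of minimal rational tangents. By Theorem~\ref{thm_KPSW_D} we may assume $X$ is Fano of dimension $2n+1$ with $K_X \simeq (L^*)^{\otimes(n+1)}$ and $\Pic X = \ZZ\cdot[L]$. First I would show that $X$ is covered by \emph{contact lines}: since $-K_X = (n+1)L$ is positive, Mori's bend-and-break produces rational curves of minimal $L$-degree one, and the contact condition forces the minimal rational curves through a general point $x$ to be tangent to $F$ and to form an unsplit family. Their tangent directions fill out the variety of minimal rational tangents $\ccC_x$, which — using that contact lines are tangent to $F$ together with item~\ref{item_integrable_the_form_vanishes} of Section~\ref{secCont} — lies inside $\PP(F_x)$, is smooth and irreducible of dimension $n-1$ at general $x$, and is \emph{Legendrian} for the symplectic form induced by $\ud\theta_F$ on $F_x$. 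This is precisely the contact cone structure of Section~\ref{secCone}.

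The crux is to prove that, at a general point, $\ccC_x \subset \PP(F_x) \cong \PP^{2n-1}$ is projectively equivalent to the \emph{subadjoint variety} of some complex simple Lie algebra $\gotg$ (Section~\ref{SecAdj}, Table~\ref{tab_list_of_adjoint}). Here one must exploit that the cone structure is \emph{integrable} in a strong sense: the maximal non-integrability of $F$ constrains how $\ccC_x$ varies with $x$, which, in the exterior-differential-systems language of the title, amounts to Frobenius-type compatibility conditions relating the second fundamental forms of $\ccC_x$ to the two-form $\ud\theta_F$. The goal is a rigidity statement: a smooth irreducible Legendrian subvariety of $\PP^{2n-1}$ admitting such a compatible family of infinitesimal deformations must be a subadjoint variety. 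A natural line of attack is induction on $n$ — form the graded symbol algebra built from $F_x$, the symplectic form, and the affine tangent cones to $\ccC_x$, show that it is the negative part of a contact grading of a simple Lie algebra, and recognise $\ccC_x$ through its own VMRT, which is again of adjoint type in lower dimension (for instance, the VMRT of the $\mathsf{E}_8$-subadjoint variety is the $\mathsf{E}_7$-subadjoint variety), thereby reducing the problem to the classification of simple $\ZZ$-graded Lie algebras with one-dimensional extreme pieces.

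Granting that $\ccC_x$ is the VMRT of the adjoint variety $X_\gotg$ of Example~\ref{exAdjCont} — and note that $\Pic X = \ZZ\cdot[L]$ rules out $X_\gotg \simeq \CC\PP^{2n+1}$, so $X_\gotg$ is a rational homogeneous space of Picard number one — I would conclude with the Cartan--Fubini type extension theorem of Hwang and Mok: a Fano manifold of Picard number one whose VMRT at a general point is projectively isomorphic to the VMRT of a rational homogeneous space $G/P$ of Picard number one is itself biholomorphic to $G/P$. This gives $X \simeq X_\gotg$, and the contact distribution must then coincide with the adjoint one, since on a contact Fano manifold with $\Pic X = \ZZ\cdot[L]$ the contact form is the unique nowhere-degenerate twisted two-form up to scaling.

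The decisive obstacle is the middle step. There is no classification of smooth Legendrian subvarieties of projective space, so one cannot simply enumerate candidates for $\ccC_x$; everything must be squeezed out of the integrability of the contact cone structure, and this is exactly where the conjecture remains open. It is known in low dimension ($\dim X \le 5$), and under extra hypotheses — e.g. when $\ccC_x$ is assumed smooth, irreducible and linearly non-degenerate of the expected dimension, or when the relevant symbol algebra is a priori reductive. The surrounding steps — covering by contact lines, smoothness of the VMRT, the Legendrian property, and the concluding recognition theorem — are by now standard components of the VMRT toolkit.
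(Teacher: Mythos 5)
You are right that this statement is a conjecture: the paper offers no proof of Conjecture~\ref{conj_LeBrun_Salamon_for_Fano}, only the remarks that it is settled for $\dim X=3$ and $\dim X=5$ and the conditional recognition result Theorem~\ref{thm_contact_recognised_by_VMRT}, so declining to produce a proof and sketching the programme instead is the correct response. Your sketch matches the paper's own framing closely: the reduction to showing that the VMRT $\ccC_x\subset\PP(F_x)$ is a subadjoint variety, followed by the Landsberg--Manivel classification and the Mok/Hwang--Mok recognition theorem, is exactly the content of Theorem~\ref{thm_contact_recognised_by_VMRT} and its proof, and you correctly locate the open gap in establishing homogeneity (or sufficient rigidity) of $\ccC_x$. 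One factual correction: in your first paragraph you assert that $\ccC_x$ is smooth \emph{and irreducible} at a general point. Smoothness is indeed known (Kebekus), but the paper explicitly warns that irreducibility of $\ccC_x$ is \emph{not} known --- the claim in the literature has a gap (see the discussion around \cite[Remark~3.2]{jabu_contact_duality_and_integrability} in Section~\ref{SecVMRT}) --- and likewise the uniqueness of the unbreakable component is open. You partially acknowledge this at the end by listing irreducibility among the ``extra hypotheses'', but the earlier unqualified assertion should be weakened, since any serious attack on the conjecture along these lines must either prove irreducibility or work around its failure.
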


This conjecture is shown for $\dim X=3$ and $\dim X =5$.
The case of $\dim X=3$ is first approached by \cite{ye}, which claims to classify projective contact threefolds---it contains convincing arguments
for Theorem~\ref{thm_KPSW_D} (for $\dim X = 3$), but not for Conjecture~\ref{conj_LeBrun_Salamon_for_Fano}.
Nevertheless, at least two approaches work: either using the classification of all Fano threefolds,
or using numerical criteria for Chern classes.\footnote{See \cite{peternell_jabu_contact_3_folds} for the latter method implemented in a more
general situation.} The case $\dim X=5$ is proved in \cite{druel}.

The low-dimensional differential geometric counterpart, that is the classification  of quaternion-K\"ahler manifolds $M$,
   or contact Fano manifolds with K\"ahler--Einstein  metric~$X$, which are their twistor spaces,
   has been shown before the projective cases. 
Explicitly, the case $\dim M=4$ is shown in \cite{hitchin}
   (recall that the definition of quaternion-K\"ahler is slightly different in the $4$-dimensional case),
   and the case $\dim M  =8$ is in~\cite{poon_salamon}.
In \cite{2herreras} the authors claimed to solve the quaternion-K\"ahler variant of Conjecture~\ref{conj_LeBrun_Salamon_for_Fano} for $\dim M =12$,
   but a mistake has been found in their statements \cite{2herreras_erratum}.
Very recently, according to the preprint \cite[Theorems~1.1 and~1.2]{jabu_wisniewski_weber_torus_on_contact},
   the quaternion-K\"ahler variant is proved for $n=3$ and $n=4$ (so $\dim M = 12$ and $\dim M = 16$).

In algebro-geometric approach, the assumption about existence of K\"ahler--Einstein metric arising from the twistor space
  is hard to digest and to apply.
There are recent interpretations in terms of $K$-stability  \cite{chen_donaldson_sun_KE_metrics_on_Fanos_1,chen_donaldson_sun_KE_metrics_on_Fanos_2,
chen_donaldson_sun_KE_metrics_on_Fanos_3}, but still they are hard to exploit in the context of contact Fano manifolds.
There are however two consequences that are very useful from the algebraic perspective:
\begin{itemize}
\item If $X$ is a complex compact manifold admitting a K\"ahler--Einstein metric, then the group of holomorphic automorphisms
of $X$ is reductive \cite{matsushima_groupe_homeomorphismes_analytiques_variete_kahlerienne}.
\item If $X$ is a complex projective manifold admitting a K\"ahler--Einstein metric,
then the tangent bundle is \emph{semistable} (in many cases this might be strengthened to \emph{stable\/})\footnote{Note
that according to \cite[Corollary~1.2]{kebekus_lines2}, the tangent bundle to a contact Fano manifold needs to be stable,
independently of the assumption about admitting a K\"ahler--Einstein metric.
However, there is a gap in this paper, see \cite[Remark~3.2]{jabu_contact_duality_and_integrability},
and this bug affects the proof of \cite[Corollary~1.2]{kebekus_lines2}.
According to loose discussion of the first named author with Kebekus,
it should be possible to fix the proof of \cite[Corollary~1.2]{kebekus_lines2}, but it needs to be carefully written down.}
\cite{lubke_stability_of_Einstein_Hermitian_vbs}.
We are not going to explain the notions of semistability and stability here.
Instead, we mention that for any ample line bundle $L$ on $X$ the following inequality of Chern classes holds
(see \cite[Theorem~0.1]{Langer_Semistable_sheaves_in_pos_char}):
\end{itemize}
\begin{equation}\label{equ_Bogomolov_Gieseker_ineq}
\bigl(2\dim X\cdot  c_2(TX) - (\dim X -1)\cdot  c_1(TX)^2\bigr)\cdot c_1(L)^{\dim X-2} \ge 0.
\end{equation}

The starting point in the classification of contact Fano manifolds $X$ of dimension $7$ or~$9$ (and the dimension of $M$ equal to $12$ or $16$)
  is the observation that the complex automorphism group of $X$ (and the   group of isometries of $M$)
  has relatively large dimension ($5$ or $8$, respectively), see \cite[Theorem~7.5]{Salamon1982}
  and \cite[Theorem~6.1]{jabu_wisniewski_weber_torus_on_contact}.
This arises from the Chern class computation that exploits in particular the inequality \eqref{equ_Bogomolov_Gieseker_ineq}
   and the Hirzebruch--Riemann--Roch Theorem.
Another ingredient is the map of sections induced from the quotient $TX \to L$, that is $H^0(TX) \to H^0(L)$.
The former vector space $H^0(TX)$ is the space of holomorphic vector fields,
   that is the Lie algebra of the automorphism group of~$X$.
The map $H^0(TX) \to H^0(L)$ is always surjective and, in the most interesting cases,
   it is in addition an isomorphism of vector spaces,
   see \cite[Proposition~1.1]{Beauville1998} and \cite[Theorem~E.13 and Corollary~E.14]{jabu_dr}.
Therefore holomorphic sections of $L$ produce families of automorphisms of $X$.
Since the aim of Conjecture~\ref{conj_LeBrun_Salamon} is to show that $X$ is homogeneous,
   in particular, we must show that it has many automorphisms, or equivalently,  that $H^0(L)$ is relatively large.
In this spirit, we have the following criteria proven throughout the last 25 years.\footnote{Further research to improve the bound on the  rank of the
torus in the condition~\ref{item_automorphisms_torus_action} of Theorem~\ref{thm_automorphisms} is ongoing by Eleonora Romano and Robert \'Smiech.}

\goodbreak
\begin{theorem}\label{thm_automorphisms}
Suppose $(X,F)$ is a contact Fano manifold, $\dim X = 2n+1$ and $L = TX/F$.
If at least one of the following conditions~{\rm\ref{item_automorphisms_toric}--\ref{item_automorphisms_dim_9}} holds, then $X$ is isomorphic to one
of the adjoint varieties.

\vglue-4pt
\vskip0pt
\begin{enumerate}
\item \label{item_automorphisms_toric}
             $X$ is a toric variety {\rm\cite{druel_toric_contact}}.
\item $\Aut(X)$ is reductive and there are sufficiently many sections of $L$,
so that the induced rational map $X\dashrightarrow \PP(H^0(L)^*)$ is generically finite onto its image {\rm\cite[Theorem~0.1]{Beauville1998}}.
\item \label{item_automorphisms_very_ample}
             $L$ is very ample {\rm\cite[Corollary~1.8 a)]{Beauville1998}}.
\item \label{item_automorphisms_torus_action}
             $\Aut(X)$ is reductive and there is an algebraic torus $(\CC^*)^r \subset \Aut(X)$ of rank $r$ such that $r \ge n-2$
             {\rm\cite[Theorem~1.3]{jabu_wisniewski_weber_torus_on_contact}}.
\item $X$ admits a K\"ahler--Einstein metric and there is an algebraic torus $(\CC^*)^r \subset \Aut(X)$ of rank $r$ such that
$r \ge \lceil\frac{n}{2}\rceil+3$ {\rm\cite[Theorem~1.1]{fang_qK_manifolds_and_symmetry_rank_2}} $($taking into account the correspondence between
contact Fano and quaternion-K\"ahler manifolds as in Theorem~{\rm\ref{thm_qK_contact_correspondence}},
see also {\rm\cite[Theorem~6.5]{jabu_wisniewski_weber_torus_on_contact})}.
\item \label{item_automorphisms_dim_9}
             $\Aut(X)$ is reductive and the dimension of $X$ is at most $9$ {\rm\cite[Theorem~1.2]{jabu_wisniewski_weber_torus_on_contact}}.
\end{enumerate}
\end{theorem}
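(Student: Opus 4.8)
\begin{prf}
The six criteria are independent theorems drawn from the indicated references, so rather than one argument I describe the common mechanism and indicate how each hypothesis feeds into it. By Theorem~\ref{thm_KPSW_D} a contact Fano manifold is either $\CC\PP^{2n+1}$ --- the adjoint variety of type $\mathsf{C}_{n+1}$ --- or a projectivised cotangent bundle, which when Fano is again an adjoint variety, namely $\PP(T^*\CC\PP^{n+1})$ of type $\mathsf{A}_{n+1}$ --- or else satisfies $\Pic X=\ZZ\cdot[L]$; so in every one of the six criteria the substance is the last case, which I assume from now on. The common goal is to prove that $\gotg:=H^0(X,TX)$, the Lie algebra of $\Aut(X)$, is a complex \emph{simple} Lie algebra. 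Once this is known, the classification of complex simple Lie algebras together with the fact that each of them carries a contact $\ZZ$-grading $\gotg=\gotg_{-2}\oplus\gotg_{-1}\oplus\gotg_0\oplus\gotg_1\oplus\gotg_2$, unique up to conjugacy --- the one attached to a $\gotsl_2$-triple for the highest root, with $\gotg_{\pm2}$ one-dimensional --- identifies $X$ with the adjoint variety $X_{\gotg}$ of Example~\ref{exAdjCont} and finishes the proof. The bridge between $X$ and $\gotg$ is the contact structure: the surjection $H^0(X,TX)\to H^0(X,L)$ induced by $\theta\colon TX\to L$ is a homomorphism onto $H^0(X,L)$ equipped with the Poisson-type contact bracket built from $\ud\theta_F$ and the duality $F\simeq F^*\otimes L$, and in all the situations below it is in fact an isomorphism (see \cite[Proposition~1.1]{Beauville1998} and \cite[Theorem~E.13 and Corollary~E.14]{jabu_dr}); in particular $|L|$ defines a rational map $\varphi\colon X\dashrightarrow\PP(H^0(X,L)^*)\simeq\PP(\gotg^*)$ that one reads as a contact moment map.

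Reductivity of $\gotg$ is the first thing needed. It is assumed outright in the second criterion and in criteria~\ref{item_automorphisms_torus_action} and~\ref{item_automorphisms_dim_9}; it follows from the K\"ahler--Einstein hypothesis by Matsushima's theorem \cite{matsushima_groupe_homeomorphismes_analytiques_variete_kahlerienne} in the fifth criterion; and it comes out of the argument in the toric case~\ref{item_automorphisms_toric} and the very ample case~\ref{item_automorphisms_very_ample}. Given reductivity, the plan for the second and third criteria, following Beauville, is this: a nontrivial central torus of $\Aut(X)$ would act trivially on $H^0(X,L)$ (the action being the adjoint one under $H^0(TX)\simeq H^0(X,L)$) yet nontrivially on $X$, hence would be collapsed by $\varphi$, contradicting its generic finiteness; so $\gotg$ is semisimple, and it must be simple since otherwise the minimal nilpotent orbit is too small for $\varphi(X)$ to span $\PP(\gotg^*)$. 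Then, using the Killing form to write $\PP(\gotg^*)\simeq\PP(\gotg)$, the affine cone over $\overline{\varphi(X)}$ is a conical $\Aut(X)$-stable subvariety of the nilpotent cone of $\gotg$, hence contains and --- by minimality --- equals the closure of the minimal nilpotent orbit, so $\overline{\varphi(X)}=X_{\gotg}$; comparing dimensions, $\varphi$ turns out to be a finite birational morphism onto the smooth, hence normal, variety $X_{\gotg}$, and is therefore an isomorphism. Criterion~\ref{item_automorphisms_very_ample} is the special case where $L$ is very ample, so $\varphi$ is an embedding and \emph{a fortiori} generically finite.

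It remains to see how the other criteria produce enough automorphisms to reach this point, and this is where the real difficulty lies. In the toric case~\ref{item_automorphisms_toric} one argues combinatorially: the contact condition, \eqref{eqLinBundContMan} and the Fano property together force the fan of $X$ to be that of $\CC\PP^{2n+1}$ \cite{druel_toric_contact}. In the torus criteria~\ref{item_automorphisms_torus_action}, the fifth criterion and~\ref{item_automorphisms_dim_9}, the additional input is an algebraic torus $(\CC^*)^r\subset\Aut(X)$ of rank at least the stated bound: decomposing $H^0(X,L)$ into $T$-weight spaces produces root-like eigensections whose weights, together with the contact bracket, start to assemble a root system; feeding in the Bia\l{}ynicki-Birula decomposition and the structure of the contact lines --- the contact cone structure of Section~\ref{secCone} --- through the $T$-fixed points, an induction on $n$ closes the combinatorics exactly at the stated rank, pinning down the (reductive, rank $\ge r$) Lie algebra $\gotg$ as a simple one of the appropriate type \cite{jabu_wisniewski_weber_torus_on_contact}. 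The genuinely hard part throughout --- and the bulk of what these papers actually do --- is the preliminary lower bound on $\dim\Aut(X)=h^0(X,TX)=h^0(X,L)$, or on the rank of a torus in $\Aut(X)$: it is squeezed out of Chern-class computations using the Hirzebruch--Riemann--Roch Theorem and the inequality \eqref{equ_Bogomolov_Gieseker_ineq}, together with Bia\l{}ynicki-Birula theory and the geometry of the variety of minimal rational tangents (Section~\ref{SecVMRT}). The subsequent combinatorial reconstruction of the root system, and in Beauville's setting the identification of the affine cone with a nilpotent orbit closure, are the other delicate points.
\end{prf}
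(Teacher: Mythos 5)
Your proposal is correct and follows essentially the same route as the paper: Theorem~\ref{thm_automorphisms} is a compilation of six independent results from the cited references, and the paper offers no proof beyond those citations together with the surrounding discussion of the map $H^0(TX)\to H^0(L)$, Matsushima reductivity, the inequality \eqref{equ_Bogomolov_Gieseker_ineq} and the Hirzebruch--Riemann--Roch theorem. Your sketch of the common mechanism --- reduction via Theorem~\ref{thm_KPSW_D} to the case $\Pic X=\ZZ\cdot[L]$, the contact moment map into $\PP(\gotg^*)$ and identification of its image with the minimal nilpotent orbit, and the torus/Bia\l{}ynicki-Birula arguments of the remaining criteria --- accurately reflects both what those references do and what the survey says about them.
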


The following example exposes one of many delicacies in the problem of deciding homogeneity of contact manifolds.

\begin{example}
   Suppose $Y$ is an abelian variety of dimension $n+1$, that is a projective manifold that has a structure of an algebraic abelian group.
   Topologically, $Y$ is a compact torus $(S^1)^{2n+2}$.
   As a complex manifold,\footnote{Note that not all sublattices $\Lambda$ will produce a projective manifold.} $Y$ is a quotient of $\CC^{n+1}$ by
   a full-rank sublattice $\Lambda \subset \CC^{n+1}$ with $\Lambda\simeq \ZZ^{2n+2}$.
   Since $Y$ has a group structure, its (co)tangent bundle is trivial $TY\simeq T^*Y \simeq \ccO_Y^{\oplus (n+1)}$.
   Thus the contact manifold $X=\PP(T^*Y)$ is isomorphic to $Y\times \CC\PP^n$.
   The automorphism group of $X$ contains $Y \times \PGL_{n+1}$, and it acts transitively on~$X$,
      thus it is similar to a homogeneous space (but it is not rational, nor Fano).
   It has many different contact structures $F \subset TX$, but for all of them $L$ is isomorphic to the pullback of $\ccO_{\CC\PP^n}(1)$.
   In particular, $L$ is nef, but not ample.
\end{example}

Similarly, among the contact manifolds obtained from Example~\ref{ex_PTY} there are quasi-homo\-ge\-neous but not homogeneous
manifolds\footnote{These are manifolds possessing  a proper open dense orbit of the automorphism group.}
obtained from some homogeneous  spaces~$Y$.
Thus it is not correct to claim that quasi-homogeneous contact manifold must be homogeneous
   (in particular, it is not known if the inverse of \cite[Corollary~1.8 b)]{Beauville1998} holds).
Instead, it is widely believed that proving Conjecture~\ref{conj_LeBrun_Salamon_for_Fano} is feasible under an additional assumption that $X$ is
quasi-homogeneous.

\section{The variety of minimal rational tangents (VMRT)}\label{SecVMRT}
In a sense, minimal rational curves are (complex) curves on a complex  manifolds that \emph{behave like lines}.\footnote{This point of view is
masterfully explained in \cite{hwang_Mori_meets_Cartan}.} This is a typical case when the complex-analytic setting parts its way from the
real-differentiable setting. Indeed, the proper generalisation of the idea of a line on a Riemannian manifold is that  of a \emph{geodesic}.
Here, the analogy is that for two sufficiently close points, there is a unique geodesic joining them---the old Euclid's fifth postulate.
In the complex-analytic setting, there is no way of measuring the ``length'' of a curve (which is a real two-dimensional object).
Instead, one can speak about its \emph{degree}.

A \emph{parametrised rational curve} in a complex projective manifold $X$ is a nonconstant generically injective morphism
\begin{equation*}
\nu_C:\CC\PP^1\longrightarrow X.
\end{equation*}
The image $C:=\nu_C(\CC\PP^1)$ is called a (unparametrised) \emph{rational curve} in $X$ and $\nu_C$ is a \emph{parametrisation} of $C$.
Indeed, the parametrisation is not unique, but there is a $ \PGL_{2}$-worth of them.

An additional structure on a complex projective manifold $X$ that is needed in order to define
the degree of a rational curve $C\subset X$ is an ample line bundle $L$.
In many cases, the line bundle is provided by an embedding of $X$ into a projective space.
Then the degree of $C$ agrees with the degree of its parametrisation,
regarded as a morphism from $\CC\PP^1$ to the projective space containing $X$.
In general, it is always a positive integer, since it coincides with the intersection number $c_1(L).C \in H^{\dim X}(X) \simeq
\ZZ\cdot[\mathrm{pt}]$.

Introducing ``the space of all rational curves on $X$'' is not as simple as in differential geometry. There, such a space is given a natural
differentiable structure\footnote{The reference textbook for such a setting is Michor's \cite{MR583436}.} and it can be conveniently studied
through the various finite-dimensional approximations provided by the jet spaces $J^k(\RR, M)$, possibly factored by the group of
reparametrisations. In the complex-analytic setting, we are not going to formalise here what ``a family of rational curves'' means.
An interested reader may check the construction of the space $\RatCur^n(X)$  of all rational curves
and the  properties of such families for  example in \cite[Section~II.2]{Kollr1996}
and, in particular, in  Definition-Proposition~II.2.11.
See also \cite[Section~2]{jabu_kapustka_kapustka_special_lines}.

$\RatCur^n(X)$ may have infinitely many connected components.

\begin{lemma}
Let $\ccK\subset \RatCur^n(X) $ be a connected component.
Then the degree of an element of $\ccK$ with respect to $L$ does not depend on the choice of the element.
\end{lemma}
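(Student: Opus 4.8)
The plan is to recognise the degree of a curve as an intersection number and then to argue that this number is locally constant on $\ccK$; since $\ccK$ is connected and the degree takes values in $\ZZ$, it will follow that it is constant. First recall that, for a rational curve $C\subset X$ with parametrisation $\nu_C\colon\CC\PP^1\to X$, one has $\deg_L C = c_1(L)\cdot C = \deg\bigl(\nu_C^*L\bigr)$, the degree of the line bundle $\nu_C^*L$ on $\CC\PP^1$; no multiplicity correction appears because $\nu_C$ is generically injective. So everything comes down to understanding the variation of $\deg(\nu_{C_t}^*L)$ as the member $C_t$ ranges over the connected component $\ccK$.

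To control this variation I would use the universal family over $\ccK$ (see \cite[Section~II.2]{Kollr1996}, in particular Definition--Proposition~II.2.11): a flat projective morphism $p\colon\ccU\to\ccK$ whose fibre $\ccU_t$ over $t\in\ccK$ is the source of the curve $C_t$, together with an evaluation morphism $q\colon\ccU\to X$ which restricts on $\ccU_t$ to a parametrisation of $C_t$. Each fibre $\ccU_t$ is a (possibly reducible, nodal) curve, and on any such curve the degree of a line bundle $\ccN$ is given by Riemann--Roch as $\deg\ccN = \chi(\ccU_t,\ccN)-\chi(\ccU_t,\ccO_{\ccU_t})$. Applying this to $\ccN=(q^*L)|_{\ccU_t}$, and using that $p$ is flat while $q^*L$ and $\ccO_{\ccU}$ are line bundles on $\ccU$ (hence flat over $\ccK$), the invariance of Euler characteristics in a flat proper family shows that $t\mapsto\chi(\ccU_t,(q^*L)|_{\ccU_t})$ and $t\mapsto\chi(\ccU_t,\ccO_{\ccU_t})$ are locally constant on $\ccK$; therefore so is $t\mapsto\deg(q^*L)|_{\ccU_t}$. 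Since a locally constant $\ZZ$-valued function on the connected set $\ccK$ is constant, this finishes the argument, provided $\deg(q^*L)|_{\ccU_t}=\deg_L C_t$.

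The point needing the most care is precisely this last identification at the boundary of $\ccK$, where $\ccU_t$ may be a chain of rational curves and $q|_{\ccU_t}$ may contract some components. One has to observe that a contracted component contributes $0$ both to $\deg(q^*L)|_{\ccU_t}$ and to the cycle $q_*[\ccU_t]$, so that $\deg(q^*L)|_{\ccU_t}=c_1(L)\cdot q_*[\ccU_t]$ remains valid and equals the $L$-degree of the $1$-cycle attached to the family member $C_t$. A more streamlined alternative, which I would mention as a remark, is to invoke directly that numerical---indeed algebraic---equivalence classes of algebraic cycles are preserved in a family parametrised by a connected base (a standard fact from the theory of Chow varieties, cf.\ \cite{Kollr1996}); then $q_*[\ccU_t]\in N_1(X)$ does not depend on $t\in\ccK$, and the degree, being the value of the fixed linear functional $c_1(L)\cdot(-)$ on this class, is automatically constant on $\ccK$.
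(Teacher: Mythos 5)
Your argument is correct. The paper does not actually prove this lemma: it simply points to \cite[Definition--Proposition~2.11]{Kollr1996}, and what you have written is essentially the standard argument underlying that reference --- degree as an intersection number, constancy of Euler characteristics in a flat proper family, hence local constancy, hence constancy on a connected component. One small simplification you could make: for $\RatCur^n(X)$ (as opposed to the Chow variety or a space of stable maps) the universal family is a $\CC\PP^1$-bundle over the parameter space, so the fibres $\ccU_t$ never degenerate into nodal chains and no components get contracted; your final paragraph's care at the ``boundary'' is therefore not needed here, though it does no harm and would be the right thing to worry about in the compactified setting.
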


See for instance \cite[Definition--Proposition 2.11]{Kollr1996}.

We say that a connected component $\ccK\subset \RatCur^n(X)$ is a \emph{uniruling} of $X$,
if there is an element $C\in\ccK$ containing a general point of $X$.
A variety is called \emph{uniruled} if it possesses a uniruling.

The simplest example of a uniruled variety is given by a ruled variety.
Every Fano manifold  is also uniruled \cite[Theorem~II.5.8]{Kollr1996}.
A Fano manifold $X$ comes with a distinguished ample line bundle $\Wedge{\dim X} TX$, the anticanonical one.

Let us see how to single out a special family of rational curves that display this aforementioned ``line-like behaviour''.
To this end, one needs an irreducible component $\ccK \subset \operatorname{RatCurve}^n(X)$ of irreducible rational curves, such that:
\begin{enumerate}
\item \label{item_minimal_curves_dominated}
$\ccK$ is a uniruling of $X$, and
\item \label{item_minimal_curves_smallest_degree}
the degree of the elements of $\ccK$ (which is well defined, as long as an ample line bundle $L$ has been chosen)
is \emph{minimal} amongst all the families fulfilling condition~\ref{item_minimal_curves_dominated}.
\end{enumerate}
This family $\ccK$, called a \emph{minimal $($with respect to $L)$ uniruling},
   to some extent behaves as the familiar set of lines in the Euclidean plane.
For instance, for a general point $x\in X$ and any point $y\in X$,
   there are at most finitely  many curves from $\ccK$, that contain $x$ and $y$.
(If in addition $y$ is sufficiently general, then there is at most one such curve.)
This is shown using a famous Mori's \emph{bend and break lemma}:
if a positive dimensional family of curves passes through two fixed points, then the family must break:
that is, in the limit we will have a reducible curve---either a multiple curve, or a union of several rational curves, see
\cite[Section~II.5]{Kollr1996}.
In particular, we would have a rational curve of lower degree, which passes through $x$, which is a general point,
hence these lower degree curves (for varying $x$) dominate $X$, contradicting item~\ref{item_minimal_curves_smallest_degree} above.

In the real-differentiable setting, the nice properties of the (infinite-dimensional) manifold of curves in $M$ come from the fact that
\emph{any curve} $C$ can be deformed in \emph{any direction}.
Indeed, the tangent space at $C$ has to be understood as the space of sections of the normal bundle $TM/TC$, and the latter always possesses sections.
In the complex-analytic settings, rational curves that can be deformed ``in any direction'' are of special interest and they are called \emph{free}.

\begin{definition}
 An element $C\in \RatCur^n(X)$ is called \emph{free} if
 \begin{equation*}
\nu_C^*( TM )=\sum_{i}\ccO_{\CC\PP^1}(a_i)\, ,\quad a_i\geq 0.
\end{equation*}
\end{definition}

Indeed, if $C$ is a smooth rational curve then from
\begin{equation*}
T_C\RatCur^n(X)=H^0(\nu_C^*( TX/TC ))
\end{equation*}
one sees that the positivity of the $a_i$ corresponds to the fact that $C$ can be deformed in any direction.
This can be generalised to the singular curves using the tangent space to the space of parametrisations.
As a consequence,

\begin{lemma}\label{lemUnirulingFreeCurve}
   An irreducible component $\ccK\subset \RatCur^n(X)$ is a uniruling if and only if $\ccK$ contains a free curve.
\end{lemma}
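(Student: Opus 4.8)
The plan is to prove both implications of the equivalence, where the crux is the standard deformation-theoretic dictionary between freeness of a curve and the ability of a uniruling to dominate $X$.

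\medskip

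First I would handle the easy direction: if $\ccK$ contains a free curve $C$, then $\ccK$ is a uniruling. Pick a free curve $C \in \ccK$ with parametrisation $\nu_C \colon \CC\PP^1 \to X$, so that $\nu_C^*(TX) = \bigoplus_i \ccO_{\CC\PP^1}(a_i)$ with all $a_i \ge 0$. The deformations of $C$ (as a parametrised curve, with the marked point fixed at some $p \in \CC\PP^1$) are controlled by $H^0(\CC\PP^1, \nu_C^*(TX) \otimes \ccO(-1))$, and the evaluation map at a general point of $\CC\PP^1$ is surjective onto the tangent space precisely because the summands with $a_i \ge 1$ contribute surjectively and there are enough of them (one can invoke \cite[Section II.3]{Kollr1996}, e.g.\ II.3.7 and the characterisation of free curves there). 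Surjectivity of the evaluation map at a general point means the curves in the family through that point sweep out a neighbourhood, hence the locus covered by curves in $\ccK$ contains an open dense subset of $X$; thus a general point of $X$ lies on some $C' \in \ccK$, which is exactly the definition of a uniruling.

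\medskip

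For the converse --- if $\ccK$ is a uniruling then it contains a free curve --- I would argue as follows. Because $\ccK$ is a uniruling, the universal family over $\ccK$ dominates $X$ via the evaluation morphism, so for a general point $x \in X$ there is a curve $C \in \ccK$ with $x \in C$, and one may moreover take $C$ general in $\ccK$ so that $C$ is smooth (generic smoothness) and the evaluation morphism is smooth at the corresponding point of the universal family. Then standard bend-and-break/deformation theory (\cite[II.3.10--II.3.11]{Kollr1996}) shows that the restriction of $TX$ to such a general $C$ is globally generated: if some summand $a_i$ of $\nu_C^*(TX) = \bigoplus_i \ccO_{\CC\PP^1}(a_i)$ were negative, the curve could not move enough to dominate $X$ through a general point while remaining irreducible of the given degree. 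More carefully, the obstruction space and the failure of the evaluation map to be dominant are both measured by the negative summands, so domination forces all $a_i \ge 0$, i.e.\ $C$ is free.

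\medskip

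The main obstacle is dealing with \emph{singular} rational curves in $\ccK$, as the text itself flags (``This can be generalised to the singular curves using the tangent space to the space of parametrisations''). On a singular curve $C$ one cannot literally write $T_C \RatCur^n(X) = H^0(\nu_C^*(TX/TC))$; instead one works on the space of parametrisations $\mathrm{Hom}(\CC\PP^1, X)$, whose tangent space at $[\nu_C]$ is $H^0(\CC\PP^1, \nu_C^*(TX))$, and the definition of free is phrased there. So the argument should be carried out at the level of $\mathrm{Hom}(\CC\PP^1, X)$ (or $\mathrm{Hom}(\CC\PP^1, X; 0 \mapsto x)$ with a marked point), where the deformation theory is clean, and then pushed down to $\RatCur^n(X)$; the generic element of a uniruling component can be chosen with nonsingular, even immersed, generic curves, which removes most of this friction. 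I would therefore state the proof mostly at the parametrisation level and cite \cite[Section II.3]{Kollr1996} for the precise correspondence, rather than grinding through the $\mathrm{Hom}$-scheme estimates.
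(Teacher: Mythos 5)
Your proposal is correct and follows exactly the route the paper itself gestures at: the survey offers no written proof beyond the two sentences preceding the lemma (the identification of the tangent space to the space of curves with sections of the normal bundle, deferring to \cite[Section~II.3]{Kollr1996}), and your argument is precisely the standard expansion of that remark---global generation of $\nu_C^*(TX)$ gives surjectivity of the differential of the evaluation map and hence dominance, while conversely dominance plus generic smoothness (valid here since we are over $\CC$) forces all $a_i\ge 0$ for a general member. Your closing caveat about working at the level of parametrisations for singular curves matches the paper's own caveat, so nothing further is needed.
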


For a subset (for example, a uniruling) $\ccK \subset\RatCur^n(X)$ and a point $x\in X$ define $\ccK_x$ to be the set of curves that pass through $x$.
In particular, by definition, a component $\ccK$ is a uniruling if and only if for a general point $x \in X$ the set $\ccK_x$ is non-empty.
We say that a uniruling $\ccK$ is \emph{unbreakable} if for general $x\in X$ the set $\ccK_x$ is compact.
A~similar ``breaking trick'' as above can be employed to show that a  minimal uniruling is necessarily unbreakable.
Otherwise, the limit cycles would consist of several components, each of smaller degree.
Also, the same argument shows that unbreakable unirulings always exist on uniruled manifolds.

\begin{definition}\label{defVMRT}
Given a uniruling $\ccK$ on $X$, which is unbreakable (for example, minimal with respect to an ample line bundle $L$),
and a general point $x\in X$,   define the subset
\begin{equation}\label{eqDefVMRT}
      \ccC_x:=\overline{\set{ T_x C \mid C \in \ccK_x,\ C \text{ is a smooth curve}}}\subset \PP T_x X.
\end{equation}
This subset is called the \emph{variety of minimal rational tangents} (\emph{VMRT\/}) of $X$ at $x$ with respect to $\ccK$.
\end{definition}

This notion of a minimal rational tangent has no counterpart in the real-differentiable setting.
Indeed, all tangent directions can be extended to a geodesic via 
 the exponential map.
On the contrary, the existence of a nontrivial VMRT on an uniruled variety corresponds
to the fact that not all tangent directions can be ``exponentiated'' to a curve of the uniruling.

\begin{example}
If $X=\CC\PP^m$ is a projective space, then the unique unbreakable component is the Grassmannian $\Gr(\CC^2, \CC^{m+1})$ parametrising the
projective lines in $X$.
This component is minimal with respect to $L=\ccO_{\CC\PP^m}(1)$, and for any point $x\in X$ the VMRT is $\CC\PP^{m-1} = \PP(T_x \CC\PP^m)$.
\end{example}

In fact, the property that the VMRT for a general point is all the projectivised tangent space characterises $\CC\PP^{m}$ among all the (uniruled)
manifolds.
More generally, in many nice situations, the VMRT of a projective manifold $X$ at its general point can determine $X$ up to bi-holomorphism.
In \cite{hwang_Mori_meets_Cartan} this is phrased as Problem~1.3, and then is exhaustively discussed with references.
See Theorem~\ref{thm_contact_recognised_by_VMRT} for a powerful example, that only uses a small piece of the theory.

In the setting of a contact Fano manifold $(X,F)$ of dimension $2n+1$ with $\Pic(X) = \ZZ \cdot L$ (where, as usually, $L$ is the quotient $TX/F$),
the situation is more explicit.
That is, every unbreakable uniruling $\ccK$ is minimal with respect to $L$, and moreover, the degree of every curve in $\ccK$ measured by $L$ is
equal to $1$ \cite[Section~2.3]{kebekus_lines1}.
Such curves of degree~$1$ are called the \emph{contact lines}.
Note that these ``lines'' always exist, although it is not known if there exists an embedding $X \hookrightarrow \CC\PP^N$ that maps ``contact lines''
onto ordinary projective space lines.
Proving the existence of such an embedding would solve Conjecture~\ref{conj_LeBrun_Salamon}, see
Theorem~\ref{thm_automorphisms}\ref{item_automorphisms_very_ample}.

\begin{rem}
If $\pi \colon X \to M $ is the twistor space of a quaternion-K\"ahler manifold $M$ (that is, $X$ admits a K\"ahler--Einstein metric as in
Theorem~\ref{thm_qK_contact_correspondence}),
then the fibres of $\pi$ are holomorphic rational curves \cite[Theorem~14.68]{besse_Einstein_mnflds} (we stress that even though the map is not
holomorphic itself, its fibres are holomorphic curves).
The degree with respect to $L$ of each such fibre is $2$, thus from the point of view of algebraic geometry they should be called \emph{twistor
conics}, see \cite{wisniewski}.
However, in the differential-geometric literature (much more extensive), they are called \emph{twistor lines}
(see for instance \cite[p.~97]{salamon_qK_geometry} or \cite[Section~3.2.4]{borowka_phd_thesis}).
This is justified because there is a natural---though only locally defined---line bundle on some small open subsets of $X$, whose tensor square is
our $L$.
(Thus the degree with respect to this local line bundle should be half of $2$, that is $1$, which makes ``line'' a sensible notation.)
Incidentally, in the differential-geometric literature this square root of $L$ is also often denoted by letter ``$L$'' (for instance, in
\cite[from p.~148 onwards]{Salamon1982}). Confusing---is it not?
We stress that ``twistor lines/conics'' are \emph{not\/} the ``contact lines'' in the above sense. In the algebro-geometric context, it is hard to
make sense of the square-root line bundle without diving into the quaternion-K\"ahler world.
\end{rem}

Still assuming that $X$ is a contact Fano manifold with $\Pic(X) = \ZZ \cdot L$, each unbreakable $\ccK$ is compact \cite[Remark~2.2]{kebekus_lines1}.
It is expected, but not known, that there is a  unique unbreakable component.
All contact lines passing through a general point $x\in X$ are smooth \cite[Proposition~3.3]{kebekus_lines1},
but special lines might potentially be singular (again, if Conjecture~\ref{conj_LeBrun_Salamon} is true, then all contact lines are smooth as they
are just ordinary lines in a minimal homogeneous embedding of $X$).
Therefore, there is no need to use ``closure'' in the definition of VMRT (Equation~\ref{eqDefVMRT}).
A VMRT $\ccC_x \subset \PP (T_x X)$ of $X$ at a general point~$x$ is smooth \cite[Theorem~1.1]{kebekus_lines2}.

Recall (cf. \eqref{eqDeThetaEff} and Example~\ref{ex_CP_2n+1}) that the  projective space $\PP(F_x)$ is itself a contact manifold of dimension
$2n-1$,  the contact distribution on it descending from  the symplectic form $(\ud \theta_F)_x$ on $F_x$.
Moreover, the VMRT $\ccC_x$ is contained in $\PP(F_x)$ and it is \emph{Legendrian}
in this contact manifold\footnote{This means, that all the tangent spaces to $\ccC_x$ are contained in the contact distribution of $\PP(F_x)$ and
also the dimension of every component of $\ccC_x$ is equal to $n-1$.
Complex Legendrian varieties are studied in details in \cite{jabu_dr} and references therein.} \cite[Proposition~4.1]{kebekus_lines1}.
Moreover, $\ccC_x$ is not contained in any further hyperplane in $\PP(F_x)$.
Despite the claim in \cite[Theorem~1.1(2) or Theorem~3.1]{kebekus_lines2}, it is not known if $\ccC_x$ is irreducible, see
\cite[Remark~3.2]{jabu_contact_duality_and_integrability} and \cite{jabu_kapustka_kapustka_special_lines} for details of this gap
and an attempt to fix it.

If $X = X_{\gotg}$ is one of the adjoint varieties as in Example~\ref{exAdjCont}, then $\ccC_x$ does not depend on the point $x$, since the Lie
group $G$ of $\gotg$ acts on $X$ transitively and preserves the geometrically defined set of lines.
In fact, from $X$ we obtain a homogeneously embedded homogenous space $\ccC_x \subset \PP(F_x)$, where the group of automorphisms of $\ccC_x$ (and
also its representation on $F_x$) is obtained from $G$ via a simple combinatorial procedure
(see for instance \cite{landsbergmanivel02} or Section~7 of the arXiv version of \cite{jabu06}).
The homogeneous space $\ccC_x$ is called the \emph{subadjoint variety} of $G$, see the dedicated Section~\ref{SecAdj}.
Its importance is underlined by the fact that an abstract contact manifold can be recognised as an adjoint variety only by looking at its VMRT, as
a consequence of \cite[Main Theorem in Section~2]{mok_recognizing_homogeneous_manifolds}.

\begin{thm} \label{thm_contact_recognised_by_VMRT}
Suppose $(X,F)$ is a contact Fano manifold with $\Pic X = \ZZ\cdot[L]$, where $L = TX/F$.
Let $\ccK$ be an unbreakable family of contact lines and $x \in X$ a general point.
If the variety of minimal rational curves $\ccC_x$ is a homogeneous space, then $X$ is an adjoint variety of some Lie group $G$.
\end{thm}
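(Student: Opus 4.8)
The plan is to derive the statement from Mok's recognition theorem \cite{mok_recognizing_homogeneous_manifolds}, feeding into it two facts recalled above: the differential-geometric nature of the VMRT of a contact Fano manifold, and the classification of homogeneous Legendrian varieties. Recall that, for a contact Fano manifold $(X,F)$ with $\Pic X = \ZZ\cdot[L]$, every unbreakable uniruling $\ccK$ is minimal with respect to $L$ and consists of $L$-degree-$1$ curves, so the set $\ccC_x \subset \PP(T_x X)$ of Definition~\ref{defVMRT} is genuinely the variety of minimal rational tangents; and that, at a general point $x$, $\ccC_x$ is a smooth nondegenerate Legendrian subvariety of the contact projective space $\PP(F_x)$ (see \cite[Proposition~4.1]{kebekus_lines1} and the discussion preceding the statement).

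First I would exploit the hypothesis that $\ccC_x$ is a homogeneous space: under a connected group a homogeneous space is smooth and connected, hence $\ccC_x$ is a smooth \emph{irreducible} nondegenerate Legendrian variety, which incidentally bypasses the open irreducibility question raised in the remarks above. By the classification of smooth homogeneous Legendrian varieties (see \cite{landsbergmanivel02}, \cite[\S7 of the arXiv version]{jabu06}, and the references in \cite{jabu_dr}), such a variety must be the \emph{subadjoint variety} of some simple complex Lie group $G$ with Lie algebra $\gotg$, that is, the VMRT at a point of the adjoint variety $X_\gotg$ (Example~\ref{exAdjCont}). Thus $\ccC_x$ is projectively isomorphic to the VMRT of $X_\gotg$. (Were one to allow homogeneity under a disconnected group, the only additional possibility is a transitively permuted union of two linear subspaces --- the type-$\mathsf{A}$ subadjoint variety --- which is the VMRT of a manifold of the form $\PP(T^*Y)$; but that forces the Picard number of $X$ to be at least $2$, against the hypothesis, so this case does not arise.)

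It then remains to apply Mok's recognition theorem. The hypothesis $\Pic X = \ZZ\cdot[L]$ gives that $X$ is a Fano manifold of Picard number one; the adjoint variety $X_\gotg$ is a rational homogeneous space of Picard number one associated to the highest root of $\gotg$, and such spaces are among the models covered by \cite[Main Theorem in Section~2]{mok_recognizing_homogeneous_manifolds}. Applying that theorem to the minimal rational component $\ccK$ on $X$, whose general VMRT is projectively isomorphic to that of $X_\gotg$, produces a biholomorphism $X \simeq X_\gotg$, which is the claim.

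The main obstacle is internal to the cited recognition theorem, namely the \emph{prolongation-rigidity} of the subadjoint model: the fact that the graded Lie algebra of infinitesimal linear automorphisms of the affine cone $\widehat{\ccC_x}$ has no non-trivial positive-degree prolongations, which is what allows a Cartan--Fubini type analytic continuation to upgrade the pointwise projective isomorphism of VMRT's to a global biholomorphism. For adjoint varieties this rigidity is known --- it follows from the classification of contact gradings of simple Lie algebras and the attached parabolic geometry, and it is precisely what places adjoint varieties on Mok's list of admissible models --- so once it is invoked the remaining steps are formal. A minor point still to settle is that $\ccC_x$ coincides, as a projective subvariety of $\PP(F_x)$, with the model VMRT under the natural identification of $\PP(F_x)$ with the model's ambient space; this is immediate from nondegeneracy together with the classification used above.
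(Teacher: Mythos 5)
Your proof follows essentially the same route as the paper's: identify the homogeneous Legendrian VMRT $\ccC_x$ with a subadjoint variety in its homogeneous embedding via the Landsberg--Manivel classification, then feed this into Mok's recognition theorem to conclude $X\simeq X_{\gotg}$. The only substantive difference is that the paper first disposes of the low-dimensional case $\dim X=3$ (where $\ccC_x$ is zero-dimensional, so neither the Legendrian classification nor Mok's theorem applies) by citing the already-established Conjecture~\ref{conj_LeBrun_Salamon_for_Fano} in that dimension; you should add that reduction, but otherwise the argument matches.
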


\begin{proof}
Since Conjecture~\ref{conj_LeBrun_Salamon_for_Fano} is proved for $\dim X =3$, we may assume $\dim X\ge 5$, and thus $\dim \ccC_x \ge 2$.
By \cite[Theorem~11]{landsbergmanivel04} the VMRT $\ccC_x \subset \PP(F_x)$ must be one of the subadjoint varieties in its subadjoint homogeneous
embedding.
Thus by \cite[Main Theorem in Section~2]{mok_recognizing_homogeneous_manifolds} the manifold $X$ must be the adjoint variety.
\end{proof}

\section{Contact cone structures}\label{secCone}
Roughly speaking, a \emph{geometric structure} on a differentiable manifold $M$ is an additional structure on $M$. For instance, a tensor on $M$ can
be the source of such an additional structure. We have already come across some of them. An \emph{almost complex structure} on $M$ is a tensor
$J\in\End(TM)$, such that $J^2=-1$. A \emph{contact structure} on $M$ is a twisted\footnote{Recall Definition~\ref{defContMan}.} one-form $\theta$,
such that $d\theta$ is non-degenerate on $\ker\theta$. A~\emph{Riemannian metric} is a non-degenerate tensor $g\in S^2T^*M$, etc.

A major concern in the study of geometric structures is their \emph{problem of equivalence}, both in its local and global formulation. As we have
already pointed out, the local equivalence problem for contact structures is trivial, that is, locally there exists only one of them.
Stated differently, contact structures do not posses \emph{local differential invariants}.

The theory of differential invariants is a powerful tool in dealing with the local equivalence problem.
We refer the reader to the specialised literature to discuss the theory in more depth: a good starting point is \cite[Chapter
7]{AlekseevskyVinogradovLychagin:BIdCDG}.
References therein point to deeper and more specific works.
We just recall that the first step consists in associating a principal bundle to $M$, whose structure group reflects the geometric structure at hand.
For instance, for any point $x\in M$ of a $2n$-dimensional almost complex manifold, the subset
\begin{equation*}
\{\phi\in\GL(T_xM)\mid \phi \circ J_x=J_x\circ \phi\}\subset\GL(T_xM)
\end{equation*}
of $\RR$-linear isomorphisms preserving the complex structure $J_x$ on $T_xM$ is a subgroup isomorphic to $\GL_n(\CC)$. All these subsets, taken
together, form a $\GL_n(\CC)$-prin\-ci\-pal bundle. Similarly, for a Riemannian metric, one obtains an $\OOO_{\dim M}(\RR)$-principal bundle.
A~corank-one distribution\footnote{A contact structure is a particular case of a corank-one distribution. A key property of geometric structures,
we refrain to deepen here, is that of \emph{integrability}. In the case of distributions, the integrability in the sense of geometric structures
corresponds to the usual  integrability in the sense of Frobenius.} of a holomorphic manifold $X$ of complex dimension $m$
singles out a parabolic subgroup of $\GL_{m}(\CC)$ stabilising a hyperplane.
Such a correspondence between geometric structures and principal bundles is the cornerstone of the so-called Cartan's method of equivalence.
A standard reference for this subject is \cite{MR2003610}.

Here we focus on the geometric structures arising in the complex-analytic  setting. Roughly speaking, in such a setting, instead of equipping a
manifold with an additional set of tensors, one may consider
the common zero locus of these tensors instead and regard the latter as the \emph{correct formalisation of the concept of a geometric structure}.
The idea of employing the Cartan's method of equivalence in facing some questions arising in algebraic geometry is explained in
\cite{hwang_Mori_meets_Cartan}.
The program is based on the following definition, that can be found in Section 2 of the aforementioned paper.
Observe that, even though we present it here for a complex manifold $X$, it applies essentially verbatim  for a differentiable manifold $M$.

\begin{definition}\label{defConStruct}
A \emph{smooth cone structure} on a complex manifold $X$ is a closed nonsingular subvariety $\ccC\subset\PP(TX)$ of the projectivised tangent bundle,
such that all components of $\ccC$
have the same dimension and the natural projection map $\ccC\to X$ is a submersion. The smooth cone structure is called \emph{isotrivial} if each two
fibres $\ccC_x$ and $\ccC_y$ are isomorphic via a linear isomorphism of the corresponding tangent spaces.
\end{definition}

Note that the dimension of each fibre $\ccC_x$ for various $x\in X$ is constant and it is called the \emph{rank} of the structure.
The main example we are interested in here is the cone structure given by the VMRT of a contact Fano manifold.

\begin{example}\label{ex_VMRT_structure_for_contact_Fano}
Suppose $X$ is a contact Fano manifold with $\Pic X =\ZZ \cdot [L]$.
Then there exists an open dense subset $X_o \subset X$ such that the union $\ccC = \bigcup_{x\in X_o} \ccC_x \subset \PP(T X_o)$ is a cone structure.
If $X$ is an adjoint variety, then we may take $X_o=X$ and $\ccC$ is isotrivial.
\end{example}

The existence of the open subset $X_o$ in bigger generality is explained in \cite[Theorem~3.18]{hwang_Mori_meets_Cartan}.
In the example, we take into account the smoothness of $\ccC_x$ at general points \cite[Theorem~1.1]{kebekus_lines2}.

\begin{definition}\label{defContConStruct}
A \emph{$($smooth, isotrivial\/$)$ contact cone structure} on the contact manifold $(X,F)$ is a (smooth, isotrivial) cone structure $\ccC$ on $X$,
such that $\ccC\subset\PP(F)$.
\end{definition}

In particular, Example~\ref{ex_VMRT_structure_for_contact_Fano} shows a contact cone structure on $X_o$.
More explicitly, if $X=X_{\gotg_2}$, the adjoint variety of $\Gsf_2$, and $\ccC$ is as in Example~\ref{ex_VMRT_structure_for_contact_Fano}, then
$\ccC$ is a fibre bundle, with fibres isomorphic to twisted cubics $\CC\PP^1\subset \CC\PP^3$.
In particular, there is a subgroup of $\GL(F_x)$ preserving the fibre $\ccC_x$ isomorphic to $\Gl_2(\CC)$ and therefore there is an associated
principal $\Gl_2(\CC)$-bundle over $X$.
We will revisit this example in Section~\ref{SecAdj}.
Contact cone structures giving rise to $\GL_2$-principal bundles have found an important application in the problem of classifying  second order
PDEs, especially in relation with the notion of integrability by the method of hydrodynamic reductions \cite{MR2765729}.

We end this section by explaining why we focus on the case of a twisted cubic in dimension five. Indeed, when  $n=2$ (that is, the contact manifold
has dimension five),  the dimension of a twisted cubic $\ccC_x$ in $\PP(F_x)$ is precisely $n-1=1$.
As illustrated in Section~\ref{secGHom}, contact cone structures of rank $n-1$ on $(2n+1)$-dimensional contact manifolds are intimately related
to (scalar) second order PDEs in one dependent and $n$ independent variables.

\section{The subadjoint variety to a simple Lie group}\label{SecAdj}

The projectivised contact distribution $\PP(F)$ of a   $(2n+1)$-dimensional contact manifold $(X,F)$ displays  a curious structure: each fibre
$\PP(F_x)$ of the $\CC\PP^{2n-1}$-bundle $\PP(F)$ is a   $(2n-1)$-dimensional contact manifold on its own account. This is essentially a particular
case of Example~\ref{ex_CP_2n+1}.  As we mentioned in Section~\ref{SecVMRT}, if $(X,F)$ is   Fano with $\Pic(X)=\ZZ\cdot L$ (where $L=TX/F$), then
every unbreakable uniruling $\ccK$ of $X$ has degree 1, when measured with respect to $L$. Then, in Section~\ref{secCone} above, we have pointed out
that the VMRT $\ccC$ of $X$ with respect to such a uniruling $\ccK$ is an example of a \emph{contact cone structure}
 (see Definition~\ref{defContConStruct}).
In the present section we focus on the particular case when $(X,F)$ is the adjoint variety to a simple Lie group $G$ (see Example~\ref{exAdjCont}).

Indeed, in this case, the cone structure is isotrivial (see Example~\ref{ex_VMRT_structure_for_contact_Fano}). This basically follows from the fact
that $G$ acts transitively on $X$ and that $\ccC$ is a $G$-invariant structure. What is somewhat less evident is the fact that   $\ccC_x$ (a generic
fibre of $\ccC$) is \emph{Legendrian} in~$\PP(F_x)$. In particular, its dimension is $n-1$.

\begin{theorem}\label{thLandsBerg}
Let $X=G/P$ be the adjoint variety of the simple Lie group $G$, and let $2n+1$ be its dimension. Regard $X$ as a projective variety in $\PP(\fg)$.
Then the family of  contact lines $\ccK$ is an unbreakable uniruling for $X$ and the corresponding  VMRT $\ccC_x$ is an $(n-1)$-dimensional
\emph{Legendrian} contact cone structure on $X$.
\end{theorem}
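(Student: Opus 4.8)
The plan is to combine the general facts about contact Fano manifolds and their varieties of minimal rational tangents recalled in Sections~\ref{SecVMRT} and~\ref{secCone} with the homogeneity of $X$, performing the one computation that is genuinely specific to the adjoint variety — the dimension and the Legendrian property of the fibre $\ccC_x$ — directly in $\fg$ by means of its \emph{contact grading}.

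First I would pass to the standard local model. Fix a Cartan subalgebra and a choice of positive roots so that the highest root $\theta$ is determined, pick root vectors $e_{\pm\theta}$ with $h_\theta:=[e_\theta,e_{-\theta}]$ completing an $\gotsl_2$-triple, and recall that $X=\overline{G\cdot[e_\theta]}\subset\PP(\fg)$ with base point $x_0:=[e_\theta]$ stabilised by $P$. The $\gotsl_2$-triple induces the contact grading
\[
\fg=\fg_{-2}\oplus\fg_{-1}\oplus\fg_{0}\oplus\fg_{1}\oplus\fg_{2},\qquad \fg_{\pm2}=\CC e_{\pm\theta},\qquad \gotp=\fg_{\geq 0},
\]
and a short bracket computation (using $[\fg_i,e_\theta]\subset\fg_{i+2}$ together with $\gotsl_2$-representation theory, which gives $[\fg_{-1},e_\theta]=\fg_1$) yields $T_{e_\theta}\widehat X=\CC e_\theta\oplus\fg_1\oplus\CC h_\theta$ for the affine cone $\widehat X$. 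Hence $\dim X=\dim\fg_1+1=2n+1$, the contact hyperplane at $x_0$ is $F_{x_0}\cong\fg_1$, the quotient line is $L_{x_0}\cong\CC h_\theta$, and under these identifications $(\ud\theta_F)_{x_0}$ is a nonzero multiple of the (nondegenerate) symplectic form $\omega$ on $\fg_1$ defined by $[u,v]=\omega(u,v)\,e_\theta$, using $[\fg_1,\fg_1]\subset\fg_2=\CC e_\theta$.

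Second I would identify the contact lines. Every projective line on $X$ through $x_0$ has the form $\Span{e_\theta,v}$, and since the elements of $\widehat X\setminus\{0\}$ are precisely the highest root vectors (those annihilated by the unipotent radical of $\gotp$), weight considerations force $v\in\fg_1$ with $[v]$ lying on the closed orbit $Z\subset\PP(\fg_1)$ of $G_0^{\mathrm{ss}}:=[G_0,G_0]$ — its highest weight orbit, the \emph{subadjoint variety} — and conversely each such $\Span{e_\theta,v}$ does lie on $X$. By homogeneity $G$ acts transitively on $X$ and permutes the lines contained in $X$ (these are intrinsic to the embedding into $\PP(\fg)$), so the family $\ccK$ of contact lines is $G$-stable; therefore $\ccK$ is a uniruling (there is a contact line through $x_0$ and $G\cdot x_0=X$), it is unbreakable (the contact lines through $x_0$ are parametrised by the projective variety $Z$), it is the unique such family, and the associated cone structure $\ccC=\bigcup_x\ccC_x$ has $\ccC_x\cong Z$ at every point via linear isomorphisms of tangent spaces. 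In particular one may take $X_o=X$, the structure $\ccC$ is smooth and isotrivial, and $\ccC_{x_0}=Z\subset\PP(\fg_1)=\PP(F_{x_0})$, so $\ccC$ is a contact cone structure on $X$ in the sense of Definition~\ref{defContConStruct}.

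It remains — and this is the main obstacle — to establish the Legendrian property: $\dim Z=n-1$ and the affine cone $\widehat Z\subset\fg_1$ is $\omega$-isotropic. The isotropy is the crisp point: for $v\in\widehat Z$ the vector $v$ is itself a highest root vector of a copy of $\gotsl_2$ in $\fg$, its tangent space $T_v\widehat Z$ lies in the nonnegative part of the corresponding grading, and one checks that $[T_v\widehat Z,T_v\widehat Z]$ meets $\CC e_\theta\subset\fg_2$ only in $0$, i.e.\ $\omega|_{T_v\widehat Z}=0$; since $\omega$ is nondegenerate this forces $\dim Z=\tfrac12\dim\fg_1-1=n-1$, so $Z$ is Legendrian. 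Alternatively, in all cases with $\Pic X=\ZZ\cdot[L]$ one may bypass this: $X$ is then a contact Fano manifold with $\Pic X=\ZZ\cdot[L]$, so by \cite[Section~2.3, Proposition~3.3 and Proposition~4.1]{kebekus_lines1} the unbreakable unirulings are exactly the families of degree-$1$ contact lines and $\ccC_x$ is automatically a Legendrian subvariety of $\PP(F_x)$ of dimension $n-1$, while the explicit identification $\ccC_x=Z$ is \cite[Theorem~11]{landsbergmanivel04}; the type $\mathsf A$ case $X=\PP(T^*\CC\PP^{n+1})$ (where $\Pic X$ has rank two, cf.\ Example~\ref{ex_PTY}) is treated by hand through its two families of contact lines, which produce $\ccC_x=\CC\PP^{n-1}\sqcup\CC\PP^{n-1\,*}$, a transverse pair of linear Legendrian subspaces. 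In either approach one should note that the ``degenerate'' type $\mathsf C$ case $X=\CC\PP^{2n+1}$ is tacitly excluded from the statement, its minimal uniruling having $L$-degree $2$ and VMRT equal to all of $\PP(T_xX)$.
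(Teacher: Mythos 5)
Your proposal takes a genuinely different route from the paper, which in fact contains no general proof of Theorem~\ref{thLandsBerg}: the authors explicitly defer it to \cite{landsbergmanivel04} and instead verify only the case $G=\Gsf_2$ by hand, using the incidence correspondence \eqref{eqIncCorrG2} between $Q=\Gsf_2/P_1$ and $X=\Gsf_2/P_2$ to exhibit $Q$ as the unbreakable uniruling and the fibre $\ccC_E$ as a Legendrian twisted cubic in $\PP(F_E)\simeq\CC\PP^3$. Your argument via the contact grading of $\fg$ is closer to the actual proof in the literature and is uniform in $G$; the paper's choice buys an explicit picture, yours buys generality. Your observations that type $\mathsf{C}$ must be tacitly excluded (no contact lines, $\ccC_x=\emptyset$) and that type $\mathsf{A}$ needs separate treatment because $\Pic X$ has rank two and $\ccC_x$ is reducible are both correct and consistent with Table~\ref{tab_list_of_adjoint}.

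One step of your ``direct'' route, however, does not work as written. From the $\omega$-isotropy of the cone over $Z$ and the nondegeneracy of $\omega$ on $\fg_1$ you may only conclude $\dim Z\le \tfrac{1}{2}\dim\fg_1-1=n-1$: a single point of $\PP(\fg_1)$ is isotropic, so nondegeneracy cannot ``force'' equality. Legendrian means isotropic \emph{of maximal dimension}, and the missing lower bound needs a separate input --- either the deformation-theoretic count for an unbreakable family of $L$-degree~$1$ curves on a contact Fano manifold (the splitting of $\nu_C^*TX$ gives $-K_X\cdot C=\dim\ccC_x+2$, while $-K_X=L^{\otimes(n+1)}$ gives $-K_X\cdot C=n+1$, which is the content of the results of \cite{kebekus_lines1} you cite), or the case-by-case identification of $Z$ as in \cite{landsbergmanivel04}. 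Since your alternative route supplies exactly this, the proposal as a whole is complete, but you should either delete the claim that nondegeneracy forces $\dim Z=n-1$ or replace it by one of these two arguments.
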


To honour the pedagogical spirit of this survey, instead of proving Theorem~\ref{thLandsBerg} (to this end we refer the reader to
\cite{landsbergmanivel04}
and references therein), we discuss and example.  We pick the smallest exceptional simple Lie group, the 14-dimensional Lie group $\Gsf_2$. This
group  is small enough to allow a direct inspection---including its graphical illustration, see  Fig.~\ref{fig_G2}---while retaining a sufficient
complexity to render the example nontrivial. Letting $\Gsf_2$ play the ambassador role is not a novelty: we mention the excellent reviews
\cite{MR2441524}  and~\cite{bryant2000elie}. None of them, however, puts enough emphasis on the subadjoint variety.

The group $\Gsf_2$ owes its existence to the singular occurrence of a \emph{generic $3$-form} on a 7-dimensional linear space $V$, that is an element
$\phi\in\bigwedge^3V^*$, such that $\GL(V)\cdot\phi$ is open. There are various coordinate expressions for $\phi$ in the literature.\footnote{See
for instance \cite[formula (23)]{Hammerl2009} or \cite[page 12]{bryant2000elie}.} What really matters here is that the 3-form $\phi$ allows us to
define a symmetric bilinear form
\[
g(v,w):=(v\ins\phi)\wedge(w\ins\phi)\wedge\phi
\]
with values in the one-dimensional space $\bigwedge^7V$. Thanks to $g$ one can define the \emph{null quadric} $Q:=\{\ell\in\PP V\mid g|_\ell\equiv
0\}$ and the Grassmannian of the \emph{specially null planes} $X:=\{ E\in\Gr(2,V)\mid g|_E\equiv 0\, ,\phi|_E\equiv 0\}$. The group
$\Gsf_2:=\Stab_{\GL(V)}(\phi)$ is a 14-dimensional Lie group acting transitively both on $Q$ and $X$. Therefore, the latter are homogeneous spaces
of $\Gsf_2$ and they are usually denoted by $Q=\Gsf_2/P_1$ and $X=\Gsf_2/P_2$, with $P_1$ being the stabiliser of a null line and $P_2$ the stabiliser
of a specially null plane, respectively. While the minimal embedding space of $Q$ is evident, being $\CC\PP^6=\PP(V)$, the manifold $X$ embeds into
$\CC\PP^{13}=\PP(\fg_2)$, the projectivised Lie algebra of $\Gsf_2$. Both $Q$ and $X$ are 5-dimensional.\footnote{More details can be found in
\cite{bryant2000elie} and references therein.}   We do not insist here on the technical details, but on the transparent geometric construction
leading to unbreakable unirulings  on $Q$ and $X$.

\begin{figure}[htb]
    \begin{center}
        \includegraphics[width=0.8\textwidth]{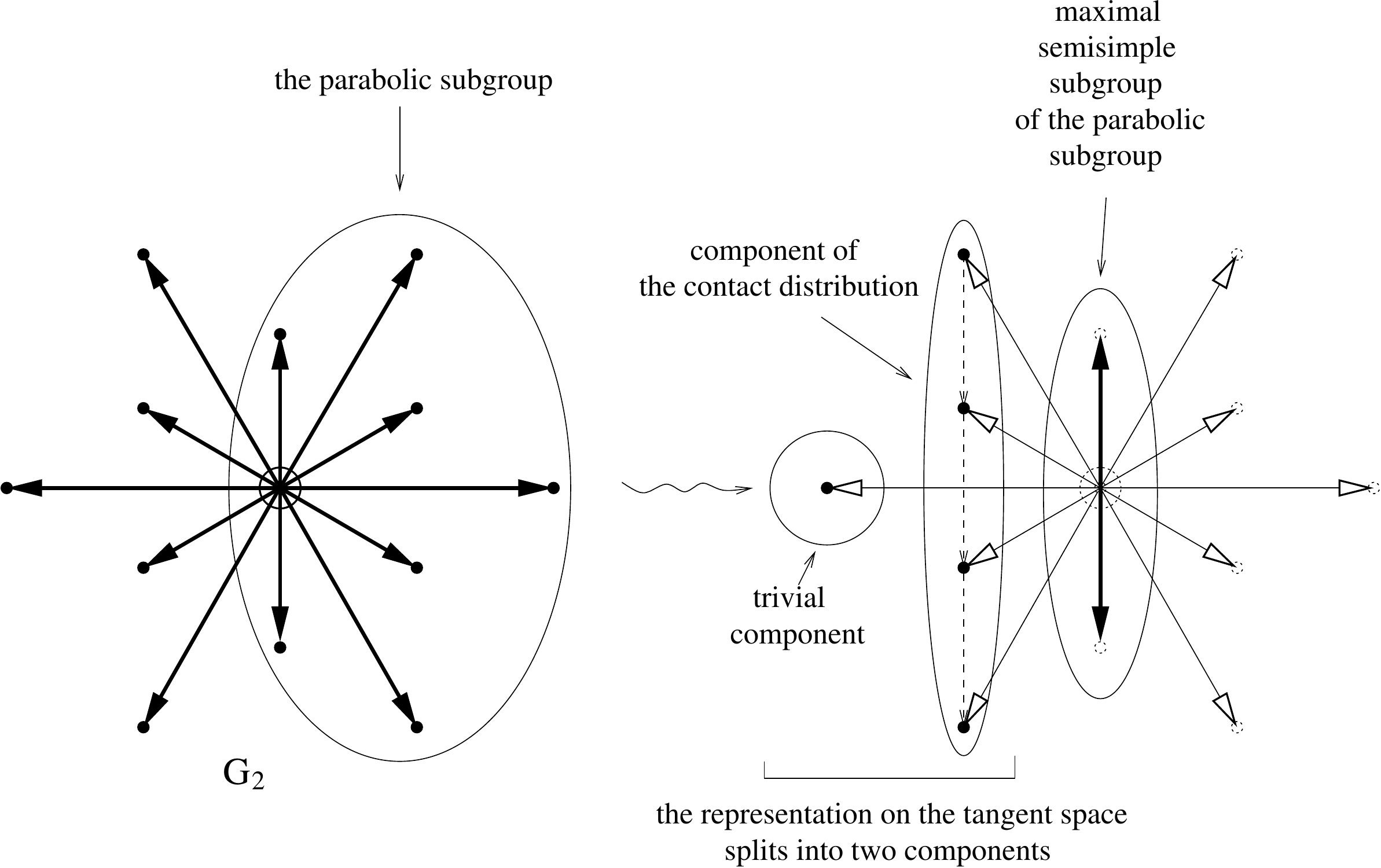}
    \end{center}
    \caption{The adjont variety of $\Gsf_2$ from the point of view of representation theory.
    In particular, the action of $\SL_2$ on $T_x X$ is visible on the right figure.}\label{fig_G2}
\end{figure}

The key is the \emph{incidence correspondence} $I:=\{ (\ell, E)\in Q\times X\mid\ell\subset E\}$, which covers both $Q$ and $X$,
\begin{equation}\label{eqIncCorrG2}
\begin{split}
\xymatrix{
 & I\ar[dr]^{p_X}\ar[dl]_{p_Q}& \\
 Q && X\ ,
}
\end{split}
\end{equation}
and turns out to be the 6-dimensional homogeneous space of $\Gsf_2$. In fact, together with  $Q$ and $X$, these are the only compact projective
homogeneous varieties of $\Gsf_2$. Both $p_Q$ and $p_X$ are $\CC\PP^1$-bundles with mutually transversal fibres. In particular,
$p_X(p_Q^{-1}(\ell))$ is a rational curve in $X$ and $p_Q(p_X^{-1}(E))$ is a rational curve in $Q$, for all $\ell\in Q$ and $E\in X$.

This obvious remark makes $\Gsf_2$ a nice testing ground for the theory of VMRTs, albeit overly simplified.
Indeed, we can regard $Q$ as a subset of $\RatCur^n(X)$ and $X$ as a subset of $\RatCur^n(Q)$, by associating with each point $\ell\in Q$ the curve
$p_X(p_Q^{-1}(\ell))$ and with each point $E\in X$ the curve $p_Q(p_X^{-1}(E))$, respectively.
The evident peculiarity of these families of rational curves, $Q$ and $X$, is that they are smooth and compact and that their dimension is the same
as the dimension of the variety they cover. As such, they contain a free element (actually, all their elements are free).
In fact, $Q$ parametrises all the lines in $X$ and consequently $Q$ is a minimal uniruling of $X$.

Although the converse does not hold, $X$ parametrises a subset of a uniruling of $Q$ by lines, and this subset is still dominant.

In the notation employed in Section~\ref{SecVMRT}, let $X_\ell$ denote the variety of all the elements of~$X$ (that are  special curves  in $Q$) that
pass through $\ell\in Q$ and by $Q_E$ the variety of all the elements of $Q$ (that are special curves in $X$) that pass through $E\in X$. By using
the same construction \eqref{eqDefVMRT} as above, we take the variety $\ccD_\ell\subset \PP(T_\ell Q)$ of all tangent directions to  the elements of
$X_\ell$ and the variety $\ccC_E\subset \PP(T_E X)$ of all tangent directions to the elements of $Q_E$. According to Definition~\ref{defVMRT},
 $\ccC_E$ is the VMRT of $X$ at $E$ \emph{with respect to} $Q$ and $\ccD_\ell$ is a subset of the VMRT of $Q$ at $\ell$
with respect to the aforementioned uniruling containing $X$.
Both are one-dimensional, but have different degrees: $\ccD_\ell$ is a line in $\CC\PP^4=\PP( T_\ell Q)$, whereas $\ccC_E$ is a rational curve of
degree~3 in $\CC\PP^4=\PP( T_E X)$. Therefore, the corresponding cones  $\hat{\ccD}_\ell$ and $\hat{\ccC}_E$ are a 2-dimensional linear subspace of
$T_\ell Q$ and a degree~3 algebraic subset of $T_E X$, respectively. In particular,  $\hat{\ccD}_\ell$ coincides with its linear span, whereas the
linear span of $\hat{\ccC}_E$ is a hyperplane $F_E$ in  $T_E X$. On the top of that, easy manipulations with the 3-form $\phi$ and the metric $g$
allow us to show that:
\begin{itemize}
\item  the distribution $\ell\longmapsto \hat{\ccD}_\ell$ is a rank-2 distribution on $Q$ with \emph{derived flag}\footnote{A $(2,3,5)$-distribution
on a five-fold can be thought of as the ``two-dimensional analogue'' of a contact distribution, that is a rank-two distribution which is maximally
nonintegrable, see \cite[Definition 2.4]{Sagerschnig2017} and references therein.} of type $(2,3,5)$;
\item the distribution $E\longmapsto F_E$ is  precisely the contact distribution of the adjoint variety $X$ of $\Gsf_2$;
\item the twisted cubic $\ccC_E$ is a \emph{Legendrian curve} in $\PP(F_E)\simeq\CC\PP^3$ with respect to the contact structure induced from the
linear  symplectic structure on $F_E$.
\end{itemize}
\looseness=-1 \noindent
It follows that the elements of the variety $Q$, regarded as a minimal (and unbreakable) uniruling of $X$,  are in fact \emph{contact lines} of $X$,
with respect to the contact distribution~$F$. Accordingly, $\ccC$ is a contact cone structure on $X$ and a particular case of those from
Example~\ref{ex_VMRT_structure_for_contact_Fano}.  The contact cone structure $\ccC$ on $X=\Gsf_2/P_2$ is, fibre by fibre, naturally isomorphic to
the \emph{subadjoint variety}
of the adjoint (contact) variety $X$ of the simple Lie group $\Gsf_2$. Theorem~\ref{thLandsBerg} has then been proved in the particular case
$G=\Gsf_2$. Notice that the role of the unbreakable uniruling $\ccK$ has been played by the smooth compact variety~$Q$.

\section{Second order PDEs associated to a contact cone structure}\label{secGHom}
In this section we explore the bridge between the theory of contact manifolds and the geometric theory of nonlinear PDEs based on exterior
differential systems (EDS).\footnote{The reader unfamiliar with EDS may take benefit from reading  McKay's gentle introduction \cite{McKay2019}. The
peculiar relationship between contact manifolds and second order nonlinear PDEs is the main subject of two recent reviews
\cite{GMM2018_BCP,MR3760967}.}  Such a bridge exists also in the complex-analytic setting, and in fact it can be obtained \emph{without} invoking
explicitly the notion of an  EDS.

From now on, $(X,F)$ is a  complex contact manifold of dimension $2n+1$. Regard $F$ as a vector bundle $F\rightarrow X$ of rank $2n$ and, for each
point $x\in X$, the form $(\ud\theta|_F)_x$ as a symplectic form on $F_x$. Observe that, due to the twist discussed in Section~\ref{secCont},
$(\ud\theta|_F)_x$ is defined up to a projective factor: it is  the object $[(\ud\theta|_F)_x]\in\PP(\bigwedge^2F_x^*)$ that is unambiguously
associated to each fibre $F_x$.
The possibility of covering $X$ by  Darboux coordinates, ascertained earlier in Section~\ref{secCont}, can be recast as follows: one can trivialise
$F$ in such a way that each trivialisation $F|_U\cong U\times\CC^{2n}$ pushes the projective class of $\ud\theta$ to the projective class of the
standard symplectic form on $\CC^{2n}$.

In other words, the bundle $F$ can be defined via $\CSp_{2n}(\CC)$-valued transition functions. Here $\CSp_{2n}(\CC)$ denotes the group of linear
transformations of $\CC^{2n}$ that preserve the projective class of  the standard symplectic form,
called the \emph{group of conformal linear symplectomorphisms}. Therefore, a $\CSp_{2n}(\CC)$-principal bundle is naturally associated to the
contact manifold $(X,F)$.

An $n$-dimensional linear subspace $L$ of $\CC^{2n}$ is called \emph{Lagrangian} if the symplectic form vanishes on it. Their collection is called
\emph{Lagrangian Grassmannian}\footnote{All relevant facts about $\LGr(n,2n)$ are reviewed in \cite{GMM2018_BCP}.} and  denoted by the symbol
$\LGr(n,2n)$. It is a nonsingular variety of dimension $\frac{n(n+1)}{2}$, naturally contained into $\Gr(n,2n)$.
Denote by $X^{(1)}\rightarrow X$ the fibre bundle, whose fibre over $x\in X$ is $\LGr(n,2n)$, and elements of the fibre represent Lagrangian
subspaces of $F_x$ with respect to the symplectic form $(\ud\theta|_F)_x$:
\begin{equation}
X^{(1)}=\{ L\mid L\textrm{ is a Lagrangian }n\textrm{-dimensional tangent space to }X\}.
\end{equation}
We shall refer to $X^{(1)}$ as the \emph{prolongation} of $X$.

Recall that an $n$-dimensional submanifold $Y\subset X$ is called \emph{Legendrian} if it is tangent to $F$.
Therefore, for any Legendrian submanifold $Y\subset X$,
we can regard the tangent bundle $T Y$  as a family of tangent spaces, that is,  elements  of $X^{(1)}$, parametrised by $Y$
(see item~\ref{item_integrable_the_form_vanishes} on page~\pageref{item_integrable_the_form_vanishes}).
In other words, we have a section $Y \to X^{(1)}|_Y$, whose image is an $n$-dimensional submanifold of $X^{(1)}$ isomorphic to $Y$.

Suppose $\ccE\subset X^{(1)}$ is a subset, such that each fibre $\ccE_x$ is a hypersurface of $X^{(1)}_x$.
We say that a ($n$-dimensional) Legendrian submanifold $Y\subset X$ is a \emph{solution} of $\ccE$ if and only if $T Y\subset \ccE$.
Accordingly, we call $\ccE$ a \emph{second order PDE} (\emph{in one dependent and $n$ independent variables}).

A skeptical reader may think in terms of the local Darboux coordinates $(\beta^i,\gamma,\alpha_i)$ on~$X$ introduced in formula~\eqref{equ_Darboux}.
Indeed, $Y\subset X$ is Legendrian if and only if there (locally) exists a function $f=f(\beta^1,\dots,\beta^n)$, such that $Y$ is described by the
equations ${\gamma=f(\beta^1,\dots,\beta^n)}$ and $\alpha_i=\frac{\partial f}{\partial \beta^i}(\beta^1,\dots,\beta^n)$, $i=1,2,\dots,n$. Then the
Darboux coordinates system can be extended to $X^{(1)}$ by introducing a set of $\frac{n(n+1)}{2}$ new coordinates $\alpha_{ij}$, with  $(i,j)$
a~pair of symmetric indices,  in such a way that $T Y$ corresponds to the additional equations $\alpha_{ij}=\frac{\partial^2 f}{\partial
\beta^i\beta^j}(\beta^1,\dots,\beta^n)$, $i,j=1,2,\dots,n$. It is then clear that a hypersurface in~$X^{(1)}$ imposes (locally) a relation among $f$
and its first and second derivatives.

Having explained how to regard hypersurfaces of $X^{(1)}$ as  second order PDEs, we produce examples, starting from an $(n-1)$-dimensional contact
cone structure on $X$, in the sense of Definition~\ref{defContConStruct}.
The core idea is classical and it can be found, for instance, in \cite[Chapter~3, Section 2]{MR2394437}. It can be grasped by looking at the diagram
\begin{equation*}
\xymatrix{
& **[r]I\subset \CC\PP^{2n-1}\times\Gr(n,2n )\ar[dr]^p\ar[dl]_q& \\
\CC\PP^{2n-1} && \Gr(n,2n).
}
\end{equation*}
Here $I$ denotes the incidence correspondence, that is the subset made of pairs $([v],L)$, with $[v]\subset L$, $v\in \CC^{2n}$ and $L\in
\Gr(n,\CC^{2n})$.

\begin{proposition}[Proposition~3.2.2 of \cite{MR2394437}]\label{propStdChow}
Let $\ccC\subset \CC\PP^{2n-1}$ be an irreducible nonsingular variety of degree $d$. Then
\begin{equation}\label{eqStdChowTrans}
\ccE_{\ccC}:=p(q^{-1}(\ccC))
\end{equation}
is an irreducible nonsingular hypersurface in $\Gr(n,\CC^{2n})$ of the same degree $d$.
\end{proposition}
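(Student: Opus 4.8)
The statement is the Grassmannian incarnation of the classical associated hypersurface (Chow form) construction, so the plan is to run the standard argument in this formulation. Identify $\Gr(n,\CC^{2n})$ with the variety of $(n-1)$-dimensional projective linear subspaces of $\CC\PP^{2n-1}$; then the correspondence $I$ becomes $\{([v],\Lambda)\mid [v]\subset\Lambda\}$, the maps $q$ and $p$ are the two projections, and $\ccE_\ccC$ is the locus of $(n-1)$-planes $\Lambda$ meeting $\ccC$. Observe that the conclusion that $\ccE_\ccC$ is a hypersurface already forces $\dim\ccC=n-1$ (if $\dim\ccC\ge n$ every $(n-1)$-plane meets $\ccC$, while if $\dim\ccC\le n-2$ the locus has codimension at least two), so I assume $\dim\ccC=n-1$ throughout. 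The proof then breaks into four steps: (i) $\widetilde{\ccC}:=q^{-1}(\ccC)$ is a smooth irreducible projective variety of dimension $n^2-1$; (ii) $p$ restricts to a birational morphism $\widetilde{\ccC}\to\ccE_\ccC$, so $\ccE_\ccC$ is an irreducible hypersurface; (iii) $\deg\ccE_\ccC=d$; (iv) $\ccE_\ccC$ is nonsingular.

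For (i) and (ii): the projection $q$ realises $I$ as a Grassmann bundle over $\CC\PP^{2n-1}$, the fibre over $[v]$ being $\{\Lambda\mid [v]\subset\Lambda\}\cong\Gr(n-1,2n-1)$; pulling back along the smooth $\ccC$, $\widetilde{\ccC}$ is a Zariski-locally trivial $\Gr(n-1,2n-1)$-bundle over $\ccC$, hence smooth and irreducible of dimension $(n-1)+(n-1)n=n^2-1=\dim\Gr(n,\CC^{2n})-1$. Since $\widetilde{\ccC}$ is projective, $\ccE_\ccC=p(\widetilde{\ccC})$ is closed and irreducible, and the fibre of $p|_{\widetilde\ccC}$ over $\Lambda$ is $\PP(\Lambda)\cap\ccC$. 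For a general point of $\ccE_\ccC$ this fibre is a single reduced point: the set of $(n-1)$-planes meeting $\ccC$ in two or more points is dominated by the triples (two points of $\ccC$, an $(n-1)$-plane through the line they span), of dimension at most $2(n-1)+\dim\Gr(n-2,2n-2)=n^2-2$, hence is a proper subvariety of $\ccE_\ccC$, and similarly the locus where the intersection is non-transverse is proper. Therefore $p|_{\widetilde\ccC}$ is birational onto $\ccE_\ccC$; in particular $\dim\ccE_\ccC=n^2-1$, i.e. $\ccE_\ccC$ is an irreducible hypersurface.

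For (iii): the degree of $\ccE_\ccC$ with respect to the Pl\"ucker polarisation (the ample generator of $\Pic\Gr(n,\CC^{2n})\cong\ZZ$) equals its intersection number with a general line of the Pl\"ucker embedding. Such a line is a pencil $\{\Lambda_t\}_{t\in\CC\PP^1}$ of $(n-1)$-planes all containing a fixed $(n-2)$-plane $\PP(A)$ and all contained in a fixed $n$-plane $\PP(B)\supset\PP(A)$. For a general flag $A\subset B$ one has $\ccC\cap\PP(A)=\emptyset$ (dimensions $(n-1)+(n-2)<2n-1$), while $\ccC\cap\PP(B)$ consists of exactly $d$ reduced points, $\PP(B)$ having codimension $n-1=\dim\ccC$; each such point lies off $\PP(A)$ and hence on the unique pencil member joining it to $\PP(A)$, and conversely any pencil member meeting $\ccC$ meets it inside $\ccC\cap\PP(B)$. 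So exactly $d$ members lie in $\ccE_\ccC$, transversally for a general pencil, and $\deg\ccE_\ccC=d$.

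The main obstacle is (iv). At a point $\Lambda_0$ for which $\PP(\Lambda_0)$ meets $\ccC$ transversally in a single point $x_0$ (equivalently, the affine tangent spaces of $\PP(\Lambda_0)$ and of $\ccC$ at $x_0$ span $\CC^{2n}$) the argument is easy: no further point of $\widetilde{\ccC}$ lies over a neighbourhood of $\Lambda_0$, and computing $\ud p$ at $(x_0,\Lambda_0)$ shows $p$ is a local isomorphism there, so $\ccE_\ccC$ is smooth at $\Lambda_0$. What remains is to control $\ccE_\ccC$ along the locus of $(n-1)$-planes tangent to $\ccC$ or meeting it in several points; there one would have to show either that this locus is absent for the $\ccC$ under consideration, or that $\ccE_\ccC$ is nevertheless smooth along it. I expect this last step, rather than the degree count, to be where the full hypotheses on $\ccC$ (and perhaps its position inside the contact fibre $\PP(F_x)$, which is the case of interest here) genuinely enter, and it is the step I would allocate the most care to.
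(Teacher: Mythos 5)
The paper offers no proof of this proposition: it is quoted verbatim from the cited source (Proposition~3.2.2 of Gelfand--Kapranov--Zelevinsky), so there is no in-paper argument to compare yours with. Your steps (i)--(iii) reproduce the standard argument behind that citation correctly: the reduction to $\dim\ccC=n-1$, the identification of $q^{-1}(\ccC)$ as a $\Gr(n-1,2n-1)$-bundle of dimension $n^2-1$, the secant-locus dimension count giving generic injectivity of $p$ and hence irreducibility and codimension one, and the pencil computation $\ccE_\ccC\cdot\ell=d$ for a general line $\{\Lambda : A\subset\Lambda\subset B\}$ on the Grassmannian. All of this is sound.

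The genuine gap is your step (iv), and it cannot be closed, because the nonsingularity claim is false as stated: the associated (Chow) hypersurface of a smooth $\ccC$ is essentially never smooth for $n\ge 2$. Already for $d=1$, i.e.\ $\ccC=\PP(W)$ a linear $(n-1)$-plane, $\ccE_\ccC$ is the Schubert hypersurface $\{\Lambda : \Lambda\cap W\neq 0\}$, which is singular along $\{\Lambda : \dim(\Lambda\cap W)\ge 2\}$ (for $n=2$ this is the tangent hyperplane section of the Pl\"ucker quadric $\Gr(2,4)\subset\CC\PP^5$, singular at the point $[W]$). For $d\ge 2$ your own analysis in (ii) locates the problem: over a genuine secant plane $\Lambda_0$ with $\PP(\Lambda_0)\cap\ccC=\{x_1,x_2\}$, the map $p$ identifies two disjoint neighbourhoods in $q^{-1}(\ccC)$ with two distinct smooth hypersurface germs through $[\Lambda_0]$, so $\ccE_\ccC$ acquires two local branches and is singular along the (nonempty) secant locus. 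So the correct conclusion of your step (iv) is not ``more care is needed'' but ``the hypotheses cannot suffice'': what the cited source actually proves is that $\ccE_\ccC$ is a reduced irreducible hypersurface (a prime divisor) of degree $d$, and the word ``nonsingular'' in the statement should be read as an overstatement --- note that the same issue recurs downstream, e.g.\ the Monge--Amp\`ere hypersurface $\det\bigl[\partial^2 f/\partial\beta^i\partial\beta^j\bigr]=0$ of Section~\ref{secMonge} is singular where the Hessian drops rank by two. Your instinct to allocate the most care to (iv) was exactly right; the resolution is to drop the claim rather than to prove it.
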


The hypersurface $\ccE_{\ccC}$ defined by \eqref{eqStdChowTrans} is usually referred to as the \emph{Chow transform} of $\ccC$.
The reader may already have guessed how to exploit this Chow transform to produce second order PDEs. It suffices to adapt
Proposition~\ref{propStdChow} to the Legendrian case. Namely, one considers an \emph{entire family} (parametrised by points $x\in X$) of incidence
correspondences
\begin{equation*}
\xymatrix{
& **[r]I_x\subset \PP(F_x)\times\LGr(n,F_x)  \ar[dr]^p\ar[dl]_q& \\
\PP(F_x) && **[r]\LGr(n,F_x)=X_x^{(1)}.
}
\end{equation*}

\begin{proposition}[Lemma~23 of~{\cite{Alekseevsky2019}}]\label{propLagChow}
Let $\ccC\subset \PP(F)$ be an irreducible isotrivial contact cone structure of degree $d$. Then
\begin{equation}\label{eqLagrChowTrans}
(\ccE_{\ccC})_x:=p(q^{-1}(\ccC_x))\,
\end{equation}
is an irreducible nonsingular hypersurface in $X_x^{(1)}$ of the same degree $d$, for any $x\in X$. On the top of that, $\ccE_{\ccC}:=\bigcup_{x\in
X}(\ccE_{\ccC})_x $ is contained in $X^{(1)}$.
\end{proposition}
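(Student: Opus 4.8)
\emph{Proof proposal.} The plan is to reduce the statement to a single fibre, settle it there by a Chow--transform argument run through an incidence correspondence, and then globalise. First I would fix $x\in X$, put $V=F_x$ (a $2n$-dimensional symplectic vector space, the form $(\ud\theta|_F)_x$ being defined up to a scalar), so that $X^{(1)}_x=\LGr(n,V)$. By isotriviality and irreducibility of $\ccC$, the fibre $\ccC_x\subset\PP(V)$ may be taken irreducible, nonsingular, of dimension $n-1$ and degree $d$, and \emph{Legendrian}: its affine cone $\hat{\ccC}_x\subset V$ is a Lagrangian cone, in the sense that at each smooth point $v$ the tangent space $T_v\hat{\ccC}_x$ is a Lagrangian $n$-plane of $V$. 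Unwinding the definitions, $(\ccE_\ccC)_x=p(q^{-1}(\ccC_x))$ is exactly the locus of Lagrangians $L$ with $\PP(L)\cap\ccC_x\neq\emptyset$; equivalently it is the intersection $\ccE^{\Gr}_{\ccC_x}\cap\LGr(n,V)$ inside $\Gr(n,V)$, where $\ccE^{\Gr}_{\ccC_x}$ is the ordinary Chow transform of $\ccC_x$, an irreducible nonsingular hypersurface of Plücker degree $d$ by Proposition~\ref{propStdChow}. So the task reduces to understanding how this hypersurface meets the smooth subvariety $\LGr(n,V)$.

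Next I would run the dimension count through the Legendrian incidence correspondence $I_x=\{([v],L)\mid [v]\in\ccC_x,\ v\in L\in\LGr(n,V)\}$, with its two projections $q,p$. The fibre of $I_x\to\ccC_x$ over $[v]$ is the set of Lagrangians of $V$ containing $v$, a copy of $\LGr(n-1,v^\perp/\langle v\rangle)\cong\LGr(n-1,2n-2)$, which is smooth and irreducible; hence $I_x$ is smooth and irreducible and $\dim I_x=(n-1)+\binom{n}{2}=\binom{n+1}{2}-1=\dim\LGr(n,V)-1$. In particular $\dim(\ccE_\ccC)_x=\dim p(I_x)\le\dim I_x<\dim\LGr(n,V)$, so $\LGr(n,V)$ is not contained in the effective Cartier divisor $\ccE^{\Gr}_{\ccC_x}$; as $\LGr(n,V)$ is irreducible, $(\ccE_\ccC)_x$ is then of pure codimension $1$ in $\LGr(n,V)$, i.e.\ a hypersurface, with $\dim(\ccE_\ccC)_x=\dim I_x$. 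From this, $p\colon I_x\to(\ccE_\ccC)_x$ is a dominant morphism of irreducible varieties of equal dimension, so $(\ccE_\ccC)_x$ is \emph{irreducible}; and, granting the transversality discussed below (which makes the intersection proper and generically reduced), the cycle $[(\ccE_\ccC)_x]$ is the restriction of $[\ccE^{\Gr}_{\ccC_x}]=d\cdot c_1(\ccO_{\Gr}(1))$, so $(\ccE_\ccC)_x$ has degree $d$ for the Plücker polarisation of $\LGr(n,V)\hookrightarrow\Gr(n,V)$.

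The hard part will be \emph{nonsingularity} of $(\ccE_\ccC)_x$, and I would prove it by showing that $\ccE^{\Gr}_{\ccC_x}$ meets $\LGr(n,V)$ transversally. At a point $L$ for which $\PP(L)\cap\ccC_x$ is a single smooth point $[v]$, identify $T_L\LGr(n,V)$ with the space of symmetric maps $\psi\colon L\to V/L$ (using $\omega$ to identify $V/L\cong L^*$); the first-order condition for such $\psi$ to stay in $\ccE^{\Gr}_{\ccC_x}$ is $\psi(v)\in(T_v\hat{\ccC}_x+L)/L$. Here the Legendrian hypothesis is decisive: $T_v\hat{\ccC}_x$ and $L$ are both Lagrangian and share the line $\langle v\rangle$, so for $L$ generic among Lagrangians through $[v]$ one has $T_v\hat{\ccC}_x\cap L=\langle v\rangle$, $(T_v\hat{\ccC}_x+L)/L$ is a hyperplane in $V/L$, and the condition is a single nonzero linear equation on $\psi$; hence $T_L(\ccE_\ccC)_x$ has codimension exactly $1$ and $L$ is a smooth point. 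It then remains to treat the proper closed strata of $(\ccE_\ccC)_x$ where $\PP(L)$ meets $\ccC_x$ in several points or where $T_v\hat{\ccC}_x\cap L$ has excess dimension; I expect this to require either a refined version of the same tangent-space computation (still controlled by the Lagrangian nature of the tangent cones to $\ccC_x$) or a normality argument combined with the generic transversality just obtained, and this is the technical core of \cite[Lemma~23]{Alekseevsky2019}, to which I would defer for the details. Finally, the global assertion is immediate as a statement about sets, since each $(\ccE_\ccC)_x$ lies in $X^{(1)}_x$; that $\ccE_\ccC=\bigcup_{x\in X}(\ccE_\ccC)_x$ is moreover a closed subvariety of $X^{(1)}$---a bona fide second-order PDE---follows from the $\CSp_{2n}(\CC)$-equivariance of the fibrewise Chow-transform construction together with isotriviality of $\ccC$, given that $X^{(1)}$ is the $\LGr(n,2n)$-bundle associated with the $\CSp_{2n}(\CC)$-structure on $F$.
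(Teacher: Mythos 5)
The paper itself offers no proof of this proposition: it is imported wholesale from the cited Lemma~23 of \cite{Alekseevsky2019}, so the only internal comparison available is with the surrounding discussion (Proposition~\ref{propStdChow} and the Legendrian incidence diagram). Measured against that, your outline is the intended mechanism and the first three conclusions are solid: identifying $(\ccE_{\ccC})_x$ with $\ccE^{\Gr}_{\ccC_x}\cap\LGr(n,F_x)$, getting irreducibility from the irreducible incidence variety $I_x$ fibred in copies of $\LGr(n-1,2n-2)$ over $\ccC_x$, getting the hypersurface property from $\dim I_x=\dim\LGr(n,F_x)-1$ together with the fact that $\ccE^{\Gr}_{\ccC_x}$ is a Cartier divisor on $\Gr(n,F_x)$, and getting the degree by restricting the class $d\cdot c_1(\ccO_{\Gr}(1))$ once generic reducedness is known. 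One caveat: you quietly strengthen the hypothesis to ``$\ccC_x$ Legendrian of dimension $n-1$''; the statement as printed says only ``contact cone structure'', and without $\dim\ccC_x=n-1$ the dimension count collapses (the image could be all of $\LGr(n,F_x)$, or of higher codimension), so this added hypothesis is not cosmetic --- it is what the rank-$(n-1)$ convention of Section~\ref{secCone} and the Legendrian property of VMRTs supply in all the intended applications.

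The step you defer --- nonsingularity away from the generic stratum --- cannot in fact be completed, because ``nonsingular'' is false as literally stated, and the paper's own Section~\ref{secMonge} exhibits the counterexample: for $\ccC_x\simeq\CC\PP^{n-1}$ linear (irreducible, Legendrian, degree $d=1$) the resulting $(\ccE_\ccC)_x$ is the Monge--Amp\`ere hypersurface $\det\bigl[\partial^2 f/\partial\beta^i\partial\beta^j\bigr]=0$, i.e.\ the determinantal hypersurface in a symmetric-matrix chart of $\LGr(n,2n)$, which is singular along the corank-$\geq 2$ locus for every $n\geq 2$. More generally, whenever $\PP(L)$ meets $\ccC_x$ in two points (or meets it non-transversally), $L$ is a singular point of $(\ccE_\ccC)_x$, which is there locally a union of two smooth branches --- exactly as for the classical Chow hypersurface of a space curve, which is singular along the secant locus. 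So your tangent-space computation at a point of single transverse incidence --- which is correct, and which uses the Lagrangian nature of $T_v\hat{\ccC}_x$ in an essential way --- proves all that can be proved: $(\ccE_\ccC)_x$ is reduced and smooth at a general point, and ``nonsingular'' in the statement (and in Proposition~\ref{propStdChow}) must be read in that generic sense. The concluding globalisation via $\CSp_{2n}(\CC)$-equivariance and isotriviality is fine.
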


The hypersurface  $\ccE_{\ccC}$ can then be interpreted as a  second order nonlinear PDE imposed on the Legendrian  ($n$-dimensional) submanifolds
of $X$. Intriguingly enough, the cone structure $\ccC$ can be recovered out of $\ccE_{\ccC}$  as the set of its Cauchy characteristics.\footnote{To
clarify this would require a lengthy discussion of the characteristic variety. The reader may get an idea of such a ``reconstruction process'' from
\cite[Section 2.6]{GMM2018_BCP}.} A crucial problem in the classification of second order  PDEs for hypersurfaces of $X$ is that of finding
\emph{contact invariants}---properties of the PDEs that do not change under the action of the group of contactomorphisms of $X$. Since the
correspondence $\ccC\longmapsto \ccE_{\ccC}$ commutes with the action of this group, the contact invariants of $\ccE_{\ccC}$ can be read off from
the distribution~$\ccC$, see \cite{MR2503974} for an example of such a technique.

\section{The $n$-dimensional  Monge--Amp\`ere equation}\label{secMonge}

We apply now Proposition~\ref{propLagChow}
to the particular case of the subadjoint variety of the Lie group
$\SL_{n+2}$. The $(2n+1)$-dimensional contact manifold  $X$ will be
$\PP (T^*\CC\PP^{n+1})$, as in Example~\ref{ex_PTY}.

Accordingly, the $2n$-dimensional contact distribution $F$  on $X$ will be given by
\begin{equation}\label{eqDefFH}
F_H:=(d_H\tau )^{-1}(H),\quad H\in X,
\end{equation}
where $\tau \colon X\longrightarrow \CC\PP^{n+1}$ is the canonical
projection and $d_H\tau$ is the differential of $\tau$ at $H$. The
simple Lie group $\SL_{n+2}$ acts transitively on $X$ making it into a
homogeneous contact manifold. The subadjoint variety of  $\SL_{n+2}$,
that is the tangent directions to contact lines of $X$ (cf.
Theorem~\ref{thLandsBerg}), is peculiar amongst all the subadjoint
varieties of simple Lie groups.

It is the sole  occurrence of a reducible variety, being the union of two
$(n-1)$-dimensional projective subspaces of $\PP F$. This can be seen
by regarding $X$ as the incidence correspondence
\begin{equation}\label{eqIncCorrSL}
\aligned
\xymatrix{
& **[r]X\subset \CC\PP^{n+1}\times\CC\PP^{n+1\,\ast}\ar[dr]^q\ar[dl]_p& \\
\CC\PP^{n+1} && \CC\PP^{n+1\,\ast}}
\endaligned
\end{equation}
of pairs $(z,\pi)\in \CC\PP^{n+1}\times\CC\PP^{n+1\,\ast}$ with $z\in
\pi$.  Diagram \eqref{eqIncCorrSL} allows to recast the equation
\eqref{eqDefFH} defining $F_H$. If $H\in X$ is a tangent hyperplane to $
\CC\PP^{n+1}$ at $z\in  \CC\PP^{n+1}$, then we identify $H\equiv
(z,\pi)$, where $\pi\in \CC\PP^{n+1\,\ast}$ is the only projective
hyperplane passing through~$z$, such that $T_z\pi=H$. Accordingly,
$z=p(H)$ and $\pi=q(H)$, and \eqref{eqDefFH} reads
\begin{equation}\label{eqContDistrViaInc}
F_H=\ker d_Hp \oplus \ker d_Hq.
\end{equation}
That is, $F_H$ is spanned by the (mutually transversal,
$n$-dimensional) tangent spaces to the fibre of $p$ and to the  fibre
of $q$.  If we take fibres instead of their tangent spaces we get the union
\begin{equation*} 
p^{-1}(z)\cup  q^{-1}(\pi)\subset X
\end{equation*}
of two $n$-dimensional projective subspaces of $X$ passing through
$H$. The variety of lines passing through $H$ is
\begin{equation*}
\ccK_H=\{ \ell\subset X\mid\ell\textrm{ is a line, }  \ell\subset
p^{-1}(z)\cup  q^{-1}(\pi) ,\ \ell\ni H\} .
\end{equation*}
Observe that ``$\ell$ is a line'' makes sense since both $p$ and $q$ are
$\CC\PP^n$-bundles.

We then obtain
\begin{equation}\label{eqSubAdjCaseSL}
\ccC_H=\PP(\ker d_Hp)\cup \PP( \ker d_Hq)\equiv
\CC\PP^{n-1}\cup\CC\PP^{n-1\,\ast}.
\end{equation}
The dualisation appearing at the last projective space is due to the
fact that \eqref{eqContDistrViaInc} is a bi-Lagrangian decomposition
with respect to the symplectic form \eqref{eqDeThetaEff} allowing to
identify $ \ker d_Hq$ with the dual of $ \ker d_Hp$.

We have then provided a toy model for testing the construction mentioned
in Proposition~\ref{propLagChow}. However, before going ahead, we should
observe that  $\ccC$ is not irreducible and, as such, the aforementioned
construction cannot work on it. For reasons that will become clear
later,  we will consider only  the second irreducible component of
$\ccC$ appearing in \eqref{eqSubAdjCaseSL}, that is
$\CC\PP^{n+1\,\ast}$, denoting it by the same symbol $\ccC$.

Now we are in position of  applying Proposition~\ref{propLagChow} and
then, according to formula \eqref{eqLagrChowTrans}, we  obtain a
subset $\ccE_\ccC$ of $X^{(1)}$.

Let us describe this $\ccE_\ccC$ in the local Darboux coordinates
$(\beta^i,\gamma,\alpha_i)$, see formula~\eqref{equ_Darboux}.
First, we observe that any smooth $n$-dimensional submanifold $S\subset
\CC\PP^{n+1}$ gives rise to a (smooth, $n$-dimensional) Legendrian
submanifold $S^\#\subset X$. By the symbol $S^\#$ we mean the
\emph{conormal variety}    (see   \cite[Definition 2.1]{Russo2019}) of
$S$, which can be defined as follows:
\begin{equation*}
S^{\#}:=\{(z,\pi)\mid z\in S ,\ \pi=\mathbb{T}_zS\}.
\end{equation*}
Here by $\mathbb{T}_zS$
we mean the embedded projective tangent space at $z$ to $S$. Notice
that such a definition makes sense both in the complex-analytic and in
the smooth-differentiable setting. As an example, we remark that, in
particular, if $S$ is a projective hyperplane $\pi$, then
$\pi^\#=q^{-1}(\pi)$, that is, among these $S^\#$ there are also the
fibres of $q$.

If $S$ is locally described by $\gamma=f(\beta^1,\dots, \beta^n)$,
then  $S^\#$  will be locally described by
$\gamma=f(\beta^1,\dots,\beta^n)$ and $\alpha_i=\frac{\partial
f}{\partial \beta^i}(\beta^1,\dots,\beta^n)$, $i=1,2,\dots, n$.
This complies with the
local description of Legendrian submanifolds discussed  in Section
\ref{secGHom} above. Therefore, we can claim $f$ to be a
\emph{solution} of $\ccE_\ccC$ if $S^\#$ is a solution of $\ccE_\ccC$
in the aforementioned sense. That is, $f$ is a solution if  and only if
  $TS^\#\subset \ccE_\ccC$, which in turn, in view of formula
\eqref{eqLagrChowTrans}, means that
$\PP(T_{(z,\mathbb{T}_zS)}S^{\#})\cap\ccC_{(z,
\mathbb{T}_zS)}\neq\emptyset$ for all $z\in S$.  Equivalently, the condition
\begin{equation}\label{eqCondMA}
\dim (T_{(z,\mathbb{T}_zS)}S^{\#}\cap \ker d_Hq ) >0
\end{equation}
must be fulfilled for all $z\in S$. Now the linear space $T_{
(z,\mathbb{T}_zS)}S^{\#}$ is spanned by the vectors
\begin{equation*}
\partial_{\beta^i}+\frac{\partial^2 f}{\partial
\beta^i\partial\beta^j}(\beta^1,\ldots,\beta^n)\partial_{\alpha_j}\,
,\quad i=1,\dots,n,
\end{equation*}
where $z=(\beta^1,\dots,\beta^n,f(\beta^1,\dots,\beta^n))$ is a point
of $S$.

On the other hand, the  linear space $\ker d_Hq$ is spanned by the
$\partial_{\beta_i}$'s.

Therefore, \eqref{eqCondMA} is fulfilled if and only if the $2n\times
2n$ matrix
\begin{equation*}
\left[\begin{array}{cc}I_n & I_n \\ \left[  \frac{\partial^2 f}{\partial
\beta^i\partial\beta^j}\right]_{i,j=1}^n & 0\end{array}\right]
\end{equation*}
is degenerate, that is,
\begin{equation}\label{eqHessIndet}
\det \biggl[\frac{\partial^2 f}{\partial \beta^i\partial\beta^j}\biggr]=0
\end{equation}
for all $(\beta^1,\ldots,\beta^n)$. We have then found a local
coordinate description of $\ccE_\ccC$ which is more reminiscent of an
honest second order PDE in $n$ independent variables.

Observe that, in virtue of the naturality of our construction,  the
equation $\ccE_\ccC$:

\begin{enumerate}

\item is $\SL_{n+2}$-invariant;

\item  admits the hyperplanes of $\CC\PP^{n+1}$ as its solutions.

\end{enumerate}

On the other hand, it has been observed by various authors that   the
celebrated  ($n$-dimensional) Monge--Amp\`ere equation
\eqref{eqHessIndet} fulfills the above properties (see, e.g.,
\cite[Theorem 19]{MR2324300} for the case $n=2$ and references therein).
The fact that \eqref{eqHessIndet}  is but a local description of a more
intrinsic  geometric object, namely  $\ccE_\ccC$, having the same
properties, simply confirms these observations.

Recall that, from the very beginning, we worked on the second
irreducible component of $\ccC$ that appeared in \eqref{eqSubAdjCaseSL}.
It is then legit to wonder what would have happened, had we chosen the  other
irreducible component of $\ccC$, that is, $\CC\PP^{n+1}$. We obviously
would have obtained another equation $\ccE_\ccC$ that:

\begin{enumerate}

\item is $\SL_{n+2}$--invariant;

\item[(ii$^*$)]  admits the hyperplanes of $\CC\PP^{n+1\,\ast}$ as its solutions.

\end{enumerate}

It is not hard to convince oneself, that these are the only two
$\SL_{n+2}$-invariant second order PDEs over the homogeneous contact
manifold $X=\PP (T^*\CC\PP^{n+1})$, if by a second order PDE over $X$
one means a \emph{closed} hypersurface $\ccE\subset X^{(1)}$, such that
all the fibres $\ccE_x$, $x\in X$, are \emph{irreducible}. Indeed, this
is the simplest case dealt with by \cite[Theorem 1]{Alekseevsky2019}
(see the table column labeled by type $\mathsf{A}$), and its proof is
based on the theory of representation and the computation of the
ring of invariants (see \cite[Sections~4.3 and~6.1]{Alekseevsky2019}).

As we have already stressed at the end of the previous section,
in the theory of second order PDEs over
contact manifolds, there is a natural notion of \emph{contact
equivalence}: two PDEs $\ccE$ and $\ccE'$ are contact equivalent if and
only if there is a contactomorphism of $X$ whose lift to $X^{(1)}$ sends
$\ccE$ to~$\ccE'$ (see, e.g., \cite{MR2352610}). For instance, the
aforementioned two $\SL_{n+2}$-invariant second order PDEs are contact
equivalent by means of the so-called \emph{total Legendre transform}:
roughly speaking, one exchanges the $\alpha_i$'s with the $\beta_i$'s
(see, e.g., \cite[Example 1.2]{MR1670044}), which is just the local
consequence of fixing a (noncanonical) linear isomorphism of
$\CC\PP^{n+1}$ with its dual. Therefore, even if there are two
$\SL_{n+2}$--invariant second order PDEs over $\PP (T^*\CC\PP^{n+1})$,
they are contact equivalent. Such a peculiarity of type
$\mathsf{A}$ is also due to the  fact---already mentioned---that only
in type $\mathsf{A}$ the subadjoint variety is reducible.

In all the other types, uniqueness holds if one assumes the
degree\footnote{``Degree'' has got nothing to do with ``order'', the
latter being always two. The degree of $\ccE$ is the degree of the
hypersurface $\ccE_x$ of the Lagrangian Grassmannian $X^{(1)}_x$,
measured via its Pl\"ucker embedding, where $x\in X$ is a generic point.
In the case when $\ccE$ is equivariant, such a definition certainly
makes sense.} of the generic fibre of $\ccE$ to be \emph{minimal}
(except type $\mathsf{B}$, where the number of lowest degree invariant
PDEs is still unknown and type $\mathsf{D}$, where it is only
conjectured to be~$1$). In the aforementioned  \cite[Theorem
1]{Alekseevsky2019} the degree of such a $G$--invariant PDEs is
explicitly computed (see the first line of the table) for \emph{all
types} (including those for whose the number of invariant PDEs is still
unknown/conjectural). In particular, the lowest degree is one only in
type~$\mathsf{A}$: this reflects the fact that only in type $\mathsf{A}$
the subadjoint variety  consists of two pieces of degree one.

Indeed, the lowest degree invariant PDE does not need to coincide with
the Lagrangian Chow transform of the subadjoint variety, whose degree
tends to be very high (see the last row of  the aforementioned table of
\cite[Theorem 1]{Alekseevsky2019}). They coincide only in type
$\mathsf{G}_2$ and~$\mathsf{B}_3$ and, if each irreducible component
of the subadjoint variety is considered separately, in type
$\mathsf{A}$ as well.  The first unified construction of  a second order
$G$-invariant PDE on   $G$-homogeneous contact manifolds, for any
simple Lie group $G$, is due to D.~The. His construction led to the
Lagrangian Chow transform of the subadjoint variety and, as such, to
not necessarily minimal-degree  second order PDEs, see
\cite{The2018}. The determination of the minimal degree of a
$G$-invariant second order  PDE came a little bit later thanks to the
already cited work \cite{Alekseevsky2019}.
 
 \bibliographystyle{plain}
 \bibliography{BibUniver}

\def\cprime{$'$} \def\scr{\mathcal}
  \def\polhk#1{\setbox0=\hbox{#1}{\ooalign{\hidewidth
  \lower1.5ex\hbox{`}\hidewidth\crcr\unhbox0}}}
\begin{thebibliography}{10}

\bibitem{MR2441524}
Ilka Agricola.
\newblock Old and new on the exceptional group {$G_2$}.
\newblock {\em Notices Amer. Math. Soc.}, 55(8):922--929, 2008.

\bibitem{AlekseevskyVinogradovLychagin:BIdCDG}
Dimitri~V. Alekseevsky, Alexandre~M. Vinogradov, and Valentin~V. Lychagin.
\newblock Basic ideas and concepts of differential geometry.
\newblock In {\em Geometry, {I}}, volume~28 of {\em Encyclopaedia Math. Sci.},
  pages 1--264. Springer, Berlin, 1991.

\bibitem{Alekseevsky2019}
Dmitri~V. Alekseevsky, Jan Gutt, Gianni Manno, and Giovanni Moreno.
\newblock Lowest degree invariant second-order {PDEs} over rational homogeneous
  contact manifolds.
\newblock {\em Communications in Contemporary Mathematics}, 21(01):1750089, jan
  2019.

\bibitem{MR2503974}
R.~Alonso-Blanco, G.~Manno, and F.~Pugliese.
\newblock Normal forms for {L}agrangian distributions on 5-dimensional contact
  manifolds.
\newblock {\em Differential Geom. Appl.}, 27(2):212--229, 2009.

\bibitem{Beauville1998}
Arnaud Beauville.
\newblock Fano contact manifolds and nilpotent orbits.
\newblock {\em Comment. Math. Helv.}, 73(4):566--583, 1998.

\bibitem{Berger1955}
Marcel Berger.
\newblock Sur les groupes d'holonomie homog\`ene des vari\'et\'es \`a connexion
  affine et des vari\'et\'es riemanniennes.
\newblock {\em Bull. Soc. Math. France}, 83:279--330, 1955.

\bibitem{besse_Einstein_mnflds}
Arthur~L. Besse.
\newblock {\em Einstein manifolds}.
\newblock Classics in Mathematics. Springer-Verlag, Berlin, 2008.
\newblock Reprint of the 1987 edition.

\bibitem{Blair2010}
David~E. Blair.
\newblock {\em Riemannian geometry of contact and symplectic manifolds}, volume
  203 of {\em Progress in Mathematics}.
\newblock Birkh\"auser Boston, Inc., Boston, MA, second edition, 2010.

\bibitem{MR1670044}
Alexei~V. Bocharov et~al.
\newblock {\em Symmetries and conservation laws for differential equations of
  mathematical physics}, volume 182 of {\em Translations of Mathematical
  Monographs}.
\newblock American Mathematical Society, Providence, RI, 1999.

\bibitem{borowka_phd_thesis}
Aleksandra~W. Bor{\'o}wka.
\newblock {\em Twistor constructions of quaternionic manifolds and
  asymptotically hyperbolic {E}instein--{W}eyl spaces}.
\newblock PhD thesis, University of Bath, 2012.
\newblock \url{http://opus.bath.ac.uk/40580/1/PhDFinalVersion.pdf}.

\bibitem{Borwka2019}
Aleksandra~W. Bor{\'{o}}wka and David M.~J. Calderbank.
\newblock Projective geometry and the quaternionic feix{\textendash}kaledin
  construction.
\newblock {\em Transactions of the American Mathematical Society}, page~1,
  2019.

\bibitem{Bryant1998-1999}
Robert~L. Bryant.
\newblock Recent advances in the theory of holonomy.
\newblock {\em S{\'e}minaire Bourbaki}, 41:351--374, 1998-1999.

\bibitem{bryant2000elie}
Robert~L. Bryant.
\newblock {\'E}lie {C}artan and geometric duality.
\newblock \url{http://hdl.handle.net/10161/12696}, 2000.

\bibitem{MR1083148}
Robert~L. Bryant et~al.
\newblock {\em Exterior differential systems}, volume~18 of {\em Mathematical
  Sciences Research Institute Publications}.
\newblock Springer-Verlag, New York, 1991.

\bibitem{jabu06}
Jaros{\l}aw Buczy{\'n}ski.
\newblock {L}egendrian subvarieties of projective space.
\newblock {\em Geom. Dedicata}, 118:87--103, 2006.
\newblock arXiv:math/0503528.

\bibitem{jabu_dr}
Jaros{\l}aw Buczy{\'n}ski.
\newblock Algebraic {L}egendrian varieties.
\newblock {\em Dissertationes Math. (Rozprawy Mat.)}, 467:86, 2009.
\newblock PhD thesis, Institute of Mathematics, Warsaw University, 2008.

\bibitem{jabu_contact_duality_and_integrability}
Jaros{\l}aw Buczy{\'n}ski.
\newblock Duality and integrability on contact {F}ano manifolds.
\newblock {\em Doc. Math.}, 15:821--841, 2010.

\bibitem{jabu_kapustka_kapustka_special_lines}
Jaros{\l}aw Bucz{y\'n}ski, Grzegorz Kapustka, and Micha{\l} Kapustka.
\newblock Special lines on contact manifolds.
\newblock arXiv: 1405.7792, 2014.

\bibitem{peternell_jabu_contact_3_folds}
Jaros{\l}aw Buczy{\'n}ski and Thomas Peternell.
\newblock Contact {M}oishezon threefolds with second {B}etti number one.
\newblock {\em Arch. Math. (Basel)}, 98(5):427--431, 2012.

\bibitem{jabu_wisniewski_weber_torus_on_contact}
Jaros{\l}aw Buczy{\'n}ski, Jaros{\l}aw Wi{\'s}niewski, and Andrzej Weber.
\newblock Algebraic torus actions on contact manifolds.
\newblock arXiv:1802.05002, 2018.

\bibitem{chen_donaldson_sun_KE_metrics_on_Fanos_1}
Xiuxiong Chen, Simon Donaldson, and Song Sun.
\newblock K\"ahler-{E}instein metrics on {F}ano manifolds. {I}: {A}pproximation
  of metrics with cone singularities.
\newblock {\em J. Amer. Math. Soc.}, 28(1):183--197, 2015.

\bibitem{chen_donaldson_sun_KE_metrics_on_Fanos_2}
Xiuxiong Chen, Simon Donaldson, and Song Sun.
\newblock K\"ahler-{E}instein metrics on {F}ano manifolds. {II}: {L}imits with
  cone angle less than {$2\pi$}.
\newblock {\em J. Amer. Math. Soc.}, 28(1):199--234, 2015.

\bibitem{chen_donaldson_sun_KE_metrics_on_Fanos_3}
Xiuxiong Chen, Simon Donaldson, and Song Sun.
\newblock K\"ahler-{E}instein metrics on {F}ano manifolds. {III}: {L}imits as
  cone angle approaches {$2\pi$} and completion of the main proof.
\newblock {\em J. Amer. Math. Soc.}, 28(1):235--278, 2015.

\bibitem{demailly}
Jean-Pierre Demailly.
\newblock On the {F}robenius integrability of certain holomorphic {$p$}-forms.
\newblock In {\em Complex geometry (G\"ottingen, 2000)}, pages 93--98.
  Springer, Berlin, 2002.

\bibitem{druel}
St{\'e}phane Druel.
\newblock Structures de contact sur les vari\'et\'es alg\'ebriques de dimension
  5.
\newblock {\em C. R. Acad. Sci. Paris S\'er. I Math.}, 327(4):365--368, 1998.

\bibitem{druel_toric_contact}
St\'ephane Druel.
\newblock Structures de contact sur les vari\'et\'es toriques.
\newblock {\em Math. Ann.}, 313(3):429--435, 1999.

\bibitem{MR3760967}
Olimjon Eshkobilov, Gianni Manno, Giovanni Moreno, and Katja Sagerschnig.
\newblock Contact manifolds, {L}agrangian {G}rassmannians and {PDE}s.
\newblock {\em Complex Manifolds}, 5:26--88, 2018.

\bibitem{fang_qK_manifolds_and_symmetry_rank_2}
Fuquan Fang.
\newblock Positive quaternionic {K}\"ahler manifolds and symmetry rank. {II}.
\newblock {\em Math. Res. Lett.}, 15(4):641--651, 2008.

\bibitem{frantzen_peternell}
Kristina Frantzen and Thomas Peternell.
\newblock On the bimeromorphic geometry of compact complex contact threefolds.
\newblock In {\em Classification of algebraic varieties}, EMS Ser. Congr. Rep.,
  pages 277--288. Eur. Math. Soc., Z\"urich, 2011.

\bibitem{MR2394437}
Izrail~M. Gelfand, Mikhail~M. Kapranov, and Andrey~V. Zelevinsky.
\newblock {\em Discriminants, resultants and multidimensional determinants}.
\newblock Modern Birkh{\"a}user Classics. Birkh{\"a}user Boston, Inc., Boston,
  MA, 2008.
\newblock Reprint of the 1994 edition.

\bibitem{10.2307/1970192}
John~W. Gray.
\newblock Some global properties of contact structures.
\newblock {\em Annals of Mathematics}, 69(2):421--450, 1959.

\bibitem{GMM2018_BCP}
Jan Gutt, Gianni Manno, and Giovanni Moreno.
\newblock Geometry of {L}agrangian grassmannians and nonlinear {PDEs}.
\newblock In {\em Geometry of {L}agrangian Grassmannians and Nonlinear {PDE}s}.
  Banach Center Publications, 2018.

\bibitem{Hammerl2009}
Matthias Hammerl and Katja Sagerschnig.
\newblock Conformal structures associated to generic rank 2 distributions on
  5-manifolds---characterization and {K}illing-field decomposition.
\newblock {\em SIGMA Symmetry Integrability Geom. Methods Appl.}, 5:Paper 081,
  29, 2009.

\bibitem{9780080873244}
Sigurdur Helgason.
\newblock {\em Differential Geometry and Symmetric Spaces (Pure and Applied
  Mathematics)}.
\newblock Academic Press, 1962.

\bibitem{2herreras}
Hayde{\'e} Herrera and Rafael Herrera.
\newblock {$\hat A$}-genus on non-spin manifolds with {$S\sp 1$} actions and
  the classification of positive quaternion-{K}\"ahler 12-manifolds.
\newblock {\em J. Differential Geom.}, 61(3):341--364, 2002.

\bibitem{2herreras_erratum}
Hayde{\'e} Herrera and Rafael Herrera.
\newblock Erratum to ``{$\hat A$}-genus on non-spin manifolds with {$S^1$}
  actions and the classification of positive quaternion-{K}\"ahler
  12-manifolds'' [{MR1979364}].
\newblock {\em J. Differential Geom.}, 90(3):521, 2012.

\bibitem{hitchin}
Nigel~J. Hitchin.
\newblock K\"ahlerian twistor spaces.
\newblock {\em Proc. London Math. Soc. (3)}, 43(1):133--150, 1981.

\bibitem{hwang_Mori_meets_Cartan}
Jun-Muk Hwang.
\newblock Mori geometry meets {C}artan geometry: varieties of minimal rational
  tangents.
\newblock In {\em Proceedings of the {I}nternational {C}ongress of
  {M}athematicians---{S}eoul 2014. {V}ol. 1}, pages 369--394. Kyung Moon Sa,
  Seoul, 2014.

\bibitem{hwang_manivel_quasi_complete_contact}
Jun-Muk Hwang and Laurent Manivel.
\newblock Quasi-complete homogeneous contact manifold associated to a cubic
  form.
\newblock In {\em Vector bundles and complex geometry}, volume 522 of {\em
  Contemp. Math.}, pages 105--112. Amer. Math. Soc., Providence, RI, 2010.

\bibitem{MR2003610}
Thomas~A. Ivey and Joseph~M. Landsberg.
\newblock {\em Cartan for beginners: differential geometry via moving frames
  and exterior differential systems}, volume~61 of {\em Graduate Studies in
  Mathematics}.
\newblock American Mathematical Society, Providence, RI, 2003.

\bibitem{9780198506010}
Dominic~D. Joyce.
\newblock {\em Compact manifolds with special holonomy}.
\newblock Oxford Mathematical Monographs. Oxford University Press, Oxford,
  2000.

\bibitem{kebekus_lines1}
Stefan Kebekus.
\newblock Lines on contact manifolds.
\newblock {\em J. Reine Angew. Math.}, 539:167--177, 2001.

\bibitem{kebekus_lines2}
Stefan Kebekus.
\newblock Lines on complex contact manifolds. {II}.
\newblock {\em Compos. Math.}, 141(1):227--252, 2005.

\bibitem{4authors}
Stefan Kebekus, Thomas Peternell, Andrew~J. Sommese, and Jaros{\l}aw~A.
  Wi{\'s}niewski.
\newblock Projective contact manifolds.
\newblock {\em Invent. Math.}, 142(1):1--15, 2000.

\bibitem{kobayashi_ochiai}
Shoshichi Kobayashi and Takushiro Ochiai.
\newblock Characterizations of complex projective spaces and hyperquadrics.
\newblock {\em J. Math. Kyoto Univ.}, 13:31--47, 1973.

\bibitem{Kollr1996}
J{\'a}nos Koll{\'a}r.
\newblock {\em Rational curves on algebraic varieties}, volume~32 of {\em
  Ergebnisse der Mathematik und ihrer Grenzgebiete. 3. Folge. A Series of
  Modern Surveys in Mathematics [Results in Mathematics and Related Areas. 3rd
  Series. A Series of Modern Surveys in Mathematics]}.
\newblock Springer-Verlag, Berlin, 1996.

\bibitem{MR2352610}
Alexei Kushner, Valentin Lychagin, and Vladimir Rubtsov.
\newblock {\em Contact geometry and non-linear differential equations}, volume
  101 of {\em Encyclopedia of Mathematics and its Applications}.
\newblock Cambridge University Press, Cambridge, 2007.

\bibitem{landsbergmanivel02}
Joseph~M. Landsberg and Laurent Manivel.
\newblock Construction and classification of complex simple {L}ie algebras via
  projective geometry.
\newblock {\em Selecta Math. (N.S.)}, 8(1):137--159, 2002.

\bibitem{landsbergmanivel04}
Joseph~M. Landsberg and Laurent Manivel.
\newblock Legendrian varieties.
\newblock {\em Asian J. Math.}, 11(3):341--359, 2007.

\bibitem{Langer_Semistable_sheaves_in_pos_char}
Adrian Langer.
\newblock Semistable sheaves in positive characteristic.
\newblock {\em Ann. of Math. (2)}, 159(1):251--276, 2004.

\bibitem{doi:10.1142/S0129167X95000146}
Claude LeBrun.
\newblock Fano manifolds, contact structures, and quaternionic geometry.
\newblock {\em International Journal of Mathematics}, 06(03):419--437, 1995.

\bibitem{LeBrun1994}
Claude LeBrun and Simon Salamon.
\newblock Strong rigidity of positive quaternion-{K}{\"a}hler manifolds.
\newblock {\em Inventiones mathematicae}, 118(1):109--132, Dec 1994.

\bibitem{lubke_stability_of_Einstein_Hermitian_vbs}
Martin L\"ubke.
\newblock Stability of {E}instein-{H}ermitian vector bundles.
\newblock {\em Manuscripta Math.}, 42(2-3):245--257, 1983.

\bibitem{MR2324300}
Gianni Manno, Francesco Oliveri, and Raffaele Vitolo.
\newblock On differential equations characterized by their {L}ie point
  symmetries.
\newblock {\em J. Math. Anal. Appl.}, 332(2):767--786, 2007.

\bibitem{matsushima_groupe_homeomorphismes_analytiques_variete_kahlerienne}
Yoz\^o Matsushima.
\newblock Sur la structure du groupe d'hom\'eomorphismes analytiques d'une
  certaine vari\'et\'e k\"ahl\'erienne.
\newblock {\em Nagoya Math. J.}, 11:145--150, 1957.

\bibitem{McKay2019}
Benjamin McKay.
\newblock Introduction to exterior differential systems.
\newblock {\em Banach Center Publications}, 117:45--55, 2019.

\bibitem{MR583436}
Peter~W. Michor.
\newblock {\em Manifolds of differentiable mappings}, volume~3 of {\em Shiva
  Mathematics Series}.
\newblock Shiva Publishing Ltd., Nantwich, 1980.

\bibitem{mok_recognizing_homogeneous_manifolds}
Ngaiming Mok.
\newblock Recognizing certain rational homogeneous manifolds of {P}icard number
  1 from their varieties of minimal rational tangents.
\newblock In {\em Third {I}nternational {C}ongress of {C}hinese
  {M}athematicians. {P}art 1, 2}, volume~2 of {\em AMS/IP Stud. Adv. Math., 42,
  pt. 1}, pages 41--61. Amer. Math. Soc., Providence, RI, 2008.

\bibitem{10.2307/3597350}
Carlos Olmos.
\newblock A geometric proof of the {B}erger holonomy theorem.
\newblock {\em Annals of Mathematics}, 161(1):579--588, 2005.

\bibitem{peternell_schrack_contact_Kaehler_mflds}
Thomas Peternell and Florian Schrack.
\newblock Contact {K}\"ahler manifolds: symmetries and deformations.
\newblock In {\em Algebraic and complex geometry}, volume~71 of {\em Springer
  Proc. Math. Stat.}, pages 285--308. Springer, Cham, 2014.

\bibitem{poon_salamon}
Yat~S. Poon and Simon~M. Salamon.
\newblock Quaternionic {K}\"ahler {$8$}-manifolds with positive scalar
  curvature.
\newblock {\em J. Differential Geom.}, 33(2):363--378, 1991.

\bibitem{Russo2019}
Francesco Russo.
\newblock Projective duality and non-degenerated symplectic
  monge{\textendash}amp{\`{e}}re equations.
\newblock {\em Banach Center Publications}, 117:113--144, 2019.

\bibitem{Sagerschnig2017}
Katja Sagerschnig and Travis Willse.
\newblock The geometry of almost {E}instein {$(2,3,5)$} distributions.
\newblock {\em SIGMA Symmetry Integrability Geom. Methods Appl.}, 13:Paper No.
  004, 56, 2017.

\bibitem{Salamon1982}
Simon Salamon.
\newblock Quaternionic {K}\"ahler manifolds.
\newblock {\em Invent. Math.}, 67(1):143--171, 1982.

\bibitem{salamon_qK_geometry}
Simon Salamon.
\newblock Quaternion-{K}\"ahler geometry.
\newblock In {\em Surveys in differential geometry: essays on {E}instein
  manifolds}, volume~6 of {\em Surv. Differ. Geom.}, pages 83--121. Int. Press,
  Boston, MA, 1999.

\bibitem{MR2765729}
Abraham~D. Smith.
\newblock Integrable {${\rm GL}(2)$} geometry and hydrodynamic partial
  differential equations.
\newblock {\em Comm. Anal. Geom.}, 18(4):743--790, 2010.

\bibitem{swann_phd_thesis}
Andrew~F. Swann.
\newblock {\em Hyper{K}{\"a}hler and Quaternionic K{\"a}hler Geometry}.
\newblock PhD thesis, Oriel College, Oxford, 1990.
\newblock \url{http://home.math.au.dk/swann/thesisafs.pdf}.

\bibitem{The2018}
Dennis The.
\newblock Exceptionally simple {PDE}.
\newblock {\em Differential Geometry and its Applications}, 56(Supplement C):13
  -- 41, 2018.

\bibitem{MR1857908}
Alexandre~M. Vinogradov.
\newblock {\em Cohomological analysis of partial differential equations and
  secondary calculus}, volume 204 of {\em Translations of Mathematical
  Monographs}.
\newblock American Mathematical Society, Providence, RI, 2001.

\bibitem{wisniewski}
Jaros\l{}aw~A. Wi{\'s}niewski.
\newblock Lines and conics on {F}ano contact manifolds.
\newblock \url{http://www.mimuw.edu.pl/~jarekw}, 2000.

\bibitem{10.2307/24901319}
Joseph~A. Wolf.
\newblock Complex homogeneous contact manifolds and quaternionic symmetric
  spaces.
\newblock {\em Journal of Mathematics and Mechanics}, 14(6):1033--1047, 1965.

\bibitem{ye}
Yun-Gang Ye.
\newblock A note on complex projective threefolds admitting holomorphic contact
  structures.
\newblock {\em Invent. Math.}, 115(2):311--314, 1994.

\end{thebibliography}

\end{document}